\newtheorem{thm}{Theorem}[section]
\newtheorem*{thm*}{Theorem}
\newtheorem{cor}[thm]{Corollary}
\newtheorem*{cor*}{Corollary}
\newtheorem{lem}[thm]{Lemma}
\theoremstyle{definition}
\newtheorem{defn}[thm]{Definition}
\newtheorem{rem}[thm]{Remark}
\newcommand{\car}{\curvearrowright}
\newcommand{\G}{\Gamma}
\newcommand{\g}{\gamma}
\newcommand{\e}{\varepsilon}
\def\La{\Lambda}
\def\ra{\rightarrow}
\def\le{\leqslant}
\def\ge{\geqslant}
\newcommand{\N}{{\mathbb N}}
\newcommand{\id}{\operatorname{id}}
\title[Some $OE$ and $W^*$-rigidity results  for actions by wreath product groups]{Some $OE$ and $W^*$-rigidity results
for actions  by wreath product groups}
\author{Ionut Chifan}
\address{Ionut Chifan, Vanderbilt University, 1326 Stevenson Center, Nashville, TN 37240}
\email{ionut.chifan@vanderbilt.edu}
\author{Sorin Popa}
\address{Sorin Popa, UCLA, Math Sciences Building, Los Angeles, CA 90095-1555}
\email{popa@math.ucla.edu}
\author{James Owen Sizemore}
\address{James Owen Sizemore, UCLA, Math Sciences Building, Los Angeles, CA 90095-1555}
\email{sizemore@math.ucla.edu} \subjclass{} \subjclass{}
\keywords{} \dedicatory{}
\date{\today}
\thanks{The first author's research is partially supported by NSF Grant 1001286}
\thanks{The second author's research is partially supported by NSF Grant DMS-1101718}
\begin{document}

\begin{abstract}
We use deformation-rigidity theory
in von Neumann algebra framework to study probability measure preserving actions by wreath product groups. In particular, we single out large families of wreath products groups satisfying various type of orbit equivalence (OE) rigidity.  For instance, we show that whenever  $H$, $K$, $\G$, $\La$ are icc, property (T) groups such that $H\wr \G$ is measure equivalent to $K\wr \La$ then automatically $\G$ is measure equivalent to $\La$ and $H^{\G}$ is measure equivalent to $K^{\La}$. Rigidity results for von Neumann algebras arising from
certain actions of such groups (i.e. W$^*$-rigidity results) are also obtained.\end{abstract}

\maketitle \tableofcontents


\section*{Introduction and Notations}

The purpose of this paper is to study
rigidity phenomena in von Neumann factors of type II$_1$ and
orbit equivalence relations arising from actions of wreath product groups on
probability measure spaces, by using deformation/rigidity methods.

Rigidity in von Neumann algebras (or W$^*$-{\it rigidity}) occurs
whenever the mere isomorphism of two {\it group measure space}
II$_1$ {\it factor} $L^\infty(X)\rtimes \Gamma\simeq
L^\infty(Y)\rtimes \Lambda$ (or of two {\it group factors}
$L(\Gamma)\simeq L(\Lambda)$), constructed from free, ergodic,
measure preserving actions of countable groups on probability
spaces, $\Gamma \curvearrowright X$,  $\Lambda \curvearrowright Y$
(respectively from infinite conjugacy class groups $\Gamma,
\Lambda$), forces the groups/actions to share some common
properties. The similar type of phenomena in orbit equivalence (OE)
ergodic theory, which derives common properties of the actions
$\Gamma \curvearrowright X$, $\Lambda \curvearrowright Y$ from the
isomorphism of their orbit equivalence relations, is called {\it
OE-rigidity}. These two types of results are in fact closely
related, as any OE of actions implements an

\pagestyle{myheadings} \markboth{I. Chifan, S. Popa, and J.O.
Sizemore}{Some OE and W$^*$-Rigidity Results}

\noindent isomorphism of their associated group measure space von
Neumann algebras (cf. \cite{Si55}), i.e. a W$^*$-equivalence of the
actions. In other words, orbit equivalence is a stronger notion of
equivalence for group actions than W$^*$-equivalence, thus making
W$^*$-rigidity results more challenging to establish than
OE-rigidity. The ultimate purpose for studying such phenomena is, of
course, the classification of group measure space II$_1$ factors and
equivalence relations in terms of their building data $\Gamma
\curvearrowright X$. In this respect, the ``rigidity'' point of view
offers a more suggestive and nuanced terminology, and a far more
intuitive set up.

W$^*$ and OE rigidity can only occur for non-amenable groups, as by
a celebrated result of Connes (\cite{C76}) all II$_1$ factors
$L^\infty(X)\rtimes \Gamma$ with $\Gamma$ amenable are approximately
finite dimensional and thus isomorphic to the so-called {\it
hyperfinite} factor $R$. Similarly, all measure preserving (m.p.)
ergodic actions of amenable groups on the standard probability space
are OE (\cite{OW}, \cite{CFW}). Moreover, non-amenable groups give
rise to non-hyperfinite II$_1$ factors and orbit equivalence
relations. It has been known for some time that non-amenable groups
can produce many classes of non-isomorphic II$_1$ factors and orbit
equivalence relations
(\cite{MvN43,D63,Mc69,Co75,C80,Zi80,Pocorr,CowHaag}), indicating a
very complex picture, and a rich and deep underlying rigidity
theory. But it was during the last ten years that this subject
really took off, with an avalanche of surprising rigidity results
being obtained on both OE and W$^*$ sides.

Much of this is due to the emergence of {\it deformation/rigidity
theory} (\cite{Po01a,PBetti,P1,P-ICM}), a set of techniques that
exploits the tension between ``soft'' and ``rigid'' parts of group
measure space II$_1$ factor $M=L^\infty(X)\rtimes \Gamma$, in order
to recapture the initial data $\Gamma \curvearrowright X$, or part
of it. This approach is based on the discovery that if the group
action has both a ``relatively soft'' part  and a ``relatively
rigid'' part, complementing one another, then the overall rigidity
of the resulting II$_1$  factor $M$ is considerably enhanced. The
``soft spots'' of  an algebra $M$ are gauged by {\it deformations}
by completely positive maps, a prototype of which being {\it
malleable deformations}, that for instance Bernoulli and Gaussian
actions have.

It is due to such a combination/complementarity of ``soft'' and
``rigid'' parts that wreath product groups $G = H\wr \Gamma$ have
soon been recognized to be ``exceptionally rigid'' in the von
Neumann algebra context. Indeed, it was already shown in \cite{P1}
that any isomorphism between group II$_1$ factors $L(G)\simeq
L(G')$, with $G=H \wr \Gamma$, $G'=H'\wr \Gamma'$ wreath product
groups, $H, H'$ abelian and $\Gamma, \Gamma'$ having property (T) of
Kazhdan, forces the groups $\Gamma, \Gamma'$ to be isomorphic. The
same was in fact shown to be true if $\Gamma, \Gamma'$ are
non-amenable product groups (\cite{P-gap}) and for certain
amalgamated free product groups $\Gamma$ (with $\Gamma'$ arbitrary!)
in \cite{PV09}, while in \cite{Io06} it is shown
that for non-amenable ICC groups $H, H'$ and amenable groups $\Gamma,
\Gamma'$, the isomorphism $L(G)\simeq L(G')$ implies $\Gamma \simeq \Gamma'$. Also, II$_1$ factors
$L(G)$ arising from wreath products $G=H \wr \Gamma$ with $H$
amenable and $\Gamma$ non-amenable were shown to be prime in
\cite{P-gap}, a fact that was later strengthened significantly, in
two ways: a relative solidity result for such $L(G)$ is proved in
\cite{CI08}, while a unique prime decomposition result for tensor
products of such factors is obtained in \cite{SW}. Finally, let us
mention that in \cite{IPV}, a large class of generalized wreath
product groups $G$ were shown to be W$^*$-{\it superrigid}, i.e. any
isomorphism between $L(G)$ and the II$_1$ factor $L(G')$ of an
arbitrary group $G'$, forces $G\simeq G'$.

It has been suggested by the second named author in 2007 that a
group measure space factor and orbit equivalence relation arising
from ANY action $G\curvearrowright X$ of a wreath product group $G=H
\wr \Gamma$ may exhibit a certain level of rigidity. This has been
confirmed at the OE-level by Hiroki Sako in \cite{Sa09}, who was
able to prove that for a large class of groups $\Gamma$, the OE
class of an action $H\wr \Gamma\curvearrowright X$ is completely
determined by the OE-class of its restriction $\Gamma
\curvearrowright X$. More precisely, he showed that, if two actions
by wreath products groups are orbit equivalent, $H \wr
\Gamma\cong_{OE}K \wr \La$, where $H$, $K$ are amenable and $\G$,
$\La$ are products of non-amenable, exact groups, then
$\Gamma\cong_{OE}\La$. His methods rely on Ozawa's techniques
involving class $\mathcal S$ groups \cite{Oz,Oz04} thus being
$C^*$-algebraic in nature and depending crucially on exactness of
the groups involved.

In  turn, in this paper we use a deformation/rigidity approach to this problem. This will allow us to
exhibit several large classes of groups for which the OE rigidity phenomenon described above holds. It will also
allow us to obtain some W$^*$-rigidity results of a similar type.

In order to state our OE rigidity result in its full generality, we
recall the following terminology (see e.g. \cite{G3, Fu99}): Two groups
$\Gamma, \La$ are {\it stably orbit equivalent}, or {\it measure
equivalent} ({\it ME}), if there exist free ergodic probability
measure preserving actions $\Gamma \curvearrowright (X, \mu)$,
$\Lambda \curvearrowright (Y, \nu)$, subsets of positive measure
$X_0\subset X$, $Y_0\subset Y$ and an isomorphism of the
corresponding probability spaces $\theta: (X_0, \mu_0) \simeq (Y_0
\nu_0)$ (where $\mu_0=\mu/\mu(X_0)$, $\nu_0=\nu/\nu(Y_0)$), such
that $\theta(\Gamma t \cap X_0)=\Lambda (\theta(t))$, for almost all
$t\in X_0$. We then write $\Gamma\cong_{ME}\La$.

We consider the following three families of groups:  for each $k=1,2,3$, we denote by ${\bf WR}(k)$ the collection of all generalized wreath product groups $H\wr_I\Gamma$ with $\G$ icc, $I$ a $\G$-set with finite stabilizers and satisfying the  condition:

\begin{enumerate}\item \label{ht} $\Gamma$ has property (T) and $H$ has Haagerup's property;
 \item \label{tt}$\Gamma$ and $H$ have property (T) and $H$ is icc;
  \item \label{ap}$\Gamma$ is a non-amenable product of infinite groups and $H$ is amenable.
\end{enumerate}

With this notation,  we obtain the following:

\begin{thm}
Let $H\wr_{I}\G,K\wr_J\La \in{\bf WR}{(k)}$ for some $k=1,2,3$. If $H\wr_{I}\Gamma$ is measure equivalent
to $K\wr_{J}\La$, then  $\Gamma$ follows measure equivalent to $\La$. Moreover, if
$H\wr_{I}\G,K\wr_J\La \in{\bf WR}{(2)}$ is such that $H\wr_{I}\Gamma\cong_{ME}K\wr_{J}\La$, then $\Gamma\cong_{ME}\La$ and
$H^{I}\cong_{ME}K^{J}$.
\end{thm}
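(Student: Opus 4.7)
Write $G = H\wr_I \G = H^{(I)}\rtimes\G$ and $G' = K\wr_J \La = K^{(J)}\rtimes\La$. By the standard measure-equivalence/stable-orbit-equivalence dictionary \cite{Fu99}, the assumption $G\cong_{ME}G'$ produces free ergodic p.m.p.\ actions $G\car(X,\mu)$, $G'\car(Y,\nu)$ with stably isomorphic orbit relations, and then Feldman--Moore/Singer \cite{Si55} gives an isomorphism of II$_1$ factors
\[
\theta:\; M:=L^\infty(X)\rtimes G \;\longrightarrow\; \bigl(L^\infty(Y)\rtimes G'\bigr)^t=:N^t
\]
sending the Cartan $L^\infty(X)$ onto the amplified Cartan inherited from $L^\infty(Y)$. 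The plan is to prove that, after unitary conjugation in $N^t$ and passing to a suitable corner, $\theta$ maps the intermediate crossed product $P := L^\infty(X)\rtimes\G \subset M$ onto its counterpart $Q := L^\infty(Y)\rtimes\La \subset N$. Such an identification translates back into a stable orbit equivalence of the restrictions $\G\car X$ and $\La\car Y$, hence $\G\cong_{ME}\La$.

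I would implement this through Popa's deformation/rigidity strategy, keyed to the wreath-product structure of $M$. The deformation lives on the base algebra $L^\infty(X)\rtimes H^{(I)}\subset M$, which is a generalized-Bernoulli-type extension indexed by the $\G$-set $I$. In case (1), I would take the $I$-fold tensor of a Haagerup c.p.\ approximation on $L(H)$ to produce a mixing malleable deformation $\{\alpha_s\}_{s\in\R}$ of $M$ that is trivial on $P$. In case (3), the amenability of $H^{(I)}$ makes $L^\infty(X)\rtimes H^{(I)}$ amenable relative to $L^\infty(X)$, and the role of a strict deformation is played by Popa's spectral-gap/gap mechanism for the non-amenable product $\G=\G_1\times\G_2$. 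In case (2), where $H$ has property (T) and lacks Haagerup approximations, the Ioana s-malleable deformation of the generalized Bernoulli action $\G\car L(H^{(I)})$ provides the needed $\{\alpha_s\}$.

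Applying $\alpha_s$ to the subalgebra $\theta^{-1}(L\La)\subset M$, property (T) of $\La$ in cases (1)--(2) (respectively, the product/spectral-gap mechanism in case (3)) forces $\alpha_s\to\id$ uniformly on $\theta^{-1}(L\La)$; Popa's transversality then yields, via the intertwining-by-bimodules criterion, that $\theta^{-1}(L\La)\prec_M P$. To upgrade this one-sided intertwining to an honest unitary conjugacy $u^*\theta^{-1}(Q)u=P$ on an appropriate corner, I would invoke the normalizer analysis of Ioana--Popa--Vaes: $\theta^{-1}(L\La)$ has a large quasi-normalizer in $M$ (it contains the image of $K^{(J)}$, which acts on $\theta^{-1}(L\La)$ by a generalized-Bernoulli-type action), and this is enough to control the normalizer of the intertwined corner and to promote the embedding to an on-the-nose conjugacy. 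The resulting identification gives the sought stable OE of the $\G$- and $\La$-subrelations, hence $\G\cong_{ME}\La$. For the moreover statement in $\mathbf{WR}(2)$, the base subgroups $H^{(I)}\lhd G$ and $K^{(J)}\lhd G'$ are themselves icc and inherit property (T) from $H$ and $K$, so the whole three-step argument can be rerun symmetrically, with the roles of base and acting group exchanged, delivering $H^I\cong_{ME} K^J$.

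The principal technical hurdle sits in case $\mathbf{WR}(2)$: neither $H$ nor $\G$ enjoys Haagerup's property, so no compact c.p.\ approximation is available along a single direction, and the malleable deformation suitable for Popa's rigidity argument must be bootstrapped purely from the Ioana s-malleable dilation of the generalized Bernoulli action $\G\car L(H^{(I)})$. In addition, the rigidity step must be arranged so as to recover the full crossed product $P=L^\infty(X)\rtimes\G$ rather than only the group subfactor $L\G$ --- precisely the technical point that also governs the second, symmetric application of deformation/rigidity in the moreover clause, and the main source of friction whenever both sides of the equivalence already carry property (T).
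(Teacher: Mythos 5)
Your overall frame (ME gives a Cartan-preserving isomorphism $\theta\colon M\to N^t$ by Singer/Feldman--Moore, then deformation/rigidity to conjugate $L^\infty(X)\rtimes\G$ onto $(L^\infty(Y)\rtimes\La)^t$, then translate back to stable OE of the restricted actions) is exactly the paper's strategy, but two essential steps are missing or wrong. First, uniform convergence of the malleable deformation on $\theta^{-1}(L\La)$ does \emph{not} yield $\theta^{-1}(L\La)\prec_M A\rtimes\G$ outright: the deformation argument only gives a dichotomy, namely intertwining either into $A\rtimes\G$ or into an algebra of the form $A\rtimes (H\wr_{\G_\iota F}\G_\iota)$ (the ``base'' direction, $F\subset I$ finite). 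Excluding the second branch is the case-specific heart of the proof and is where the hypotheses on $H$ actually enter: in case (1) one uses Haagerup's property of $H$ (via $c_0$ multipliers contradicting property (T), forcing the absurd conclusion $Q\prec_M A$ with $A$ abelian); in case (3) one uses spectral gap plus amenability of $A\rtimes(H\wr_I\G_\iota\G_k)$; and in case (2), where both sides are rigid, one needs a genuinely nontrivial back-and-forth argument (applying rigidity to $\phi^{-1}(LK^T)$, an ultrapower intertwining lemma, and normalizer augmentation using $\phi(A)=B^t$) to reach a contradiction. Your proposal asserts the desired one-sided intertwining directly and identifies the difficulty in case (2) as the construction of a deformation, which is not the issue --- the free-product deformation $\tilde H=H\ast\Z$ works uniformly in all three cases; the difficulty is ruling out the base alternative when $K$ also has property (T).

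Second, the ``moreover'' clause cannot be obtained by ``rerunning the argument with the roles of base and acting group exchanged'': your claim that $H^{(I)}$ and $K^{(J)}$ inherit property (T) from $H$ and $K$ is false, since an infinite direct sum is not finitely generated and hence never has property (T); moreover the wreath product is asymmetric, so there is no crossed-product decomposition over the base playing the role that $A\rtimes\G$ played. The paper instead applies rigidity to a single copy $\phi(LH)$, uses the mixing/normalizer control lemma to propagate the intertwining from $LH$ to $A\rtimes H^I$ into $B\rtimes K^J$, runs the symmetric argument for $\phi^{-1}$, and then invokes a different mechanism entirely --- the existence of finite-index bimodules in both directions between the irreducible \emph{regular} subfactors $\phi(A\rtimes H^I)$ and $B\rtimes K^J$, together with the icc hypotheses and Theorem 8.4 of Ioana--Peterson--Popa --- to get unitary conjugacy. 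Finally, in both parts the conjugating unitary must be arranged to normalize the Cartan $B^t$ (otherwise the conjugacy of intermediate subalgebras does not translate into an orbit equivalence of the restricted actions); this requires an extra Cartan-conjugation step (Popa's Theorem A.1 in the Betti numbers paper), which your outline omits.
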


To prove the above result, we exploit the fact that the group measure space
von Numann algebra $M$ associated to an action of a wreath product group $H\wr \Gamma$ is ``distinctly soft''
on its $H^{(\Gamma)}$-part, independently of the action. In turn, the fact that $\Gamma$ acts in a very mixing way on $H^{(\Gamma)}$
makes $\Gamma$ ``strongly singular'' (or ``malnormal'') in $M$. When combined with rigidity assumptions on $\Gamma$, this allows us to first extract information about the associated crossed product von Neumann algebra regardless of how the group acts, then finally deducing the above OE rigidity result.

On the other hand, if we now assume that $\G$ acts compactly on the probability space $X$, then we can distinguish the subalgebra
$L(H^{(\Gamma)})$ on which $\G$ acts mixingly from the subalgebra $L^\infty(X)$
on which it acts compactly. This allows us to obtain the following {\it strong W$^*$-rigidity} result:

\begin{thm}
Let $H,K$ be amenable groups and $\G,\La$ groups with the property (T).  Assume that $H\wr\G\curvearrowright^{\sigma}X$ and
$K\wr\La\curvearrowright^{\rho}Y$ are free, measure preserving action such that ${\sigma}_{|\G}$ is compact, ergodic and $\rho_{|\La}$ is
ergodic. If $L^\infty(X)\rtimes (H\wr \G) \simeq L^\infty(Y)\rtimes (K\wr \La)$, then
$\G\curvearrowright^{\sigma_{{|_{\G}}}}X$ is virtually conjugate to $\La\curvearrowright^{\rho_{{|_{\La}}}}Y$.
\end{thm}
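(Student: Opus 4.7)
Write $M=L^\infty(X)\rtimes(H\wr\G)=L^\infty(Y)\rtimes(K\wr\La)$, and set $A=L^\infty(X)$, $B=L^\infty(Y)$, $P=A\rtimes\G$, $Q=B\rtimes\La$. The plan is to recover the virtual conjugacy of the two restriction actions in three stages: first, show that the group factors $L(\G)$ and $L(\La)$ are commensurable inside $M$ via unitary conjugation; second, use the compactness of $\sigma_{|\G}$ together with the mixingness of the wreath shifts to identify $P$ and $Q$ as the quasi-normalizers of these subfactors; third, translate the resulting Cartan-preserving identification $P\cong Q$ into the required virtual conjugacy of the restricted actions.

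For the intertwining of $L(\G)$ with $L(\La)$, equip $L(H^{(\G)})\subset M$ with the Gaussian s-malleable deformation (available because $H$ is amenable) and extend it $\G$-equivariantly to a deformation $(\tilde M,(\alpha_t))$ of $M$ that fixes $P$ pointwise. Property (T) of $\G$ forces $\alpha_t\to\id$ uniformly on the unit ball of $L(\G)$, and because $\G\curvearrowright L(H^{(\G)})$ is mixing, Popa's transversality dichotomy rules out the amenable-target alternative $L(\G)\prec_M A\rtimes H^{(\G)}$ and, combined with the symmetric argument in the $K\wr\La$ decomposition, yields a unitary $u\in M$ under which $uL(\G)u^*$ is commensurable with $L(\La)$; after passing to finite-index subgroups $\G_0\le\G$ and $\La_0\le\La$ this becomes a literal equality $uL(\G_0)u^*=L(\La_0)$. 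For the identification $P=\mathrm{qN}_M(L(\G))''$, compactness of $\sigma_{|\G}$ means every $a\in A$ has precompact $\G$-orbit in $L^2(A)$, so $A\subseteq\mathrm{qN}_M(L(\G))''$ and hence $P\subseteq\mathrm{qN}_M(L(\G))''$. For the reverse inclusion, expand a quasi-normalizing element in Fourier series over $H^{(\G)}$: the tensor product of the compact Koopman representation $\G\curvearrowright L^2(A)$ with the mixing shift $\G\curvearrowright\ell^2(H^{(\G)}\setminus\{e\})$ is itself mixing, so the conjugation action of $L(\G)$ on $L^2(M)\ominus L^2(P)$ admits no finite-dimensional invariant subspaces, forcing all $h\neq e$ Fourier modes of a quasi-normalizer to vanish. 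Thus $\mathrm{qN}_M(L(\G))''=P$. Conjugating by $u$ gives $uPu^*=\mathrm{qN}_M(L(\La))''$; the same analysis on the $\La$-side realizes the right-hand side as $B_0\rtimes\La$ with $B_0\subseteq B$ generated by the compact part of $\rho_{|\La}$, and trace matching forces $B_0=B$. Consequently $\rho_{|\La}$ is automatically compact and $uPu^*=Q$.

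With $uPu^*=Q$ and $uAu^*$ a Cartan subalgebra of $Q$, the associated orbit equivalence relations of $\G\curvearrowright X$ and $\La\curvearrowright Y$ agree, and since both actions are now compact, ergodic, and carried out by property (T) groups, they can be modeled as translation actions of dense countable subgroups on compact homogeneous spaces $G_1/H_1$ and $G_2/H_2$. The combination of this orbit equivalence with the additional matching $uL(\G_0)u^*=L(\La_0)$ then forces, via property (T) rigidity on both sides, an isomorphism of $G_1$ with $G_2$ modulo a finite kernel---precisely the content of virtual conjugacy. The main obstacle I foresee is the reverse inclusion $\mathrm{qN}_M(L(\G))''\subseteq P$: the Fourier/spectral-gap argument has to extract the orthogonal-to-$P$ components of a quasi-normalizer cleanly using only the compact $\otimes$ mixing heuristic in a precise $L^2$ sense, and it is exactly this step that subsequently forces the a priori unknown compactness of $\rho_{|\La}$.
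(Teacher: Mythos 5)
There is a genuine gap, and it sits exactly where the compactness hypothesis has to do its work. The theorem assumes only an abstract isomorphism $\theta\colon L^\infty(X)\rtimes(H\wr\G)\to L^\infty(Y)\rtimes(K\wr\La)$; nothing identifies the two Cartan subalgebras a priori. At the end you pass from ``$uPu^*=Q$ and $uAu^*$ is a Cartan subalgebra of $Q$'' to ``the orbit equivalence relations of $\G\curvearrowright X$ and $\La\curvearrowright Y$ agree,'' but two Cartan subalgebras of the same algebra generate, in general, non-isomorphic equivalence relations: without proving $uAu^*=B$ (or at least $\theta(A)\prec_N B$, which Theorem A.1 of \cite{PBetti} upgrades to unitary conjugacy of Cartans) no orbit equivalence can be extracted, for the full actions or for their restrictions. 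This Cartan identification is precisely what the paper's weak compactness theorem (Theorem \ref{wcint}) provides: compactness of the action makes $\theta(L^\infty(X))\subset N$ a weakly compact regular embedding, and the Ozawa--Popa-style spectral gap argument of \cite{OP1}, run against the wreath-product deformation, forces $\theta(L^\infty(X))\prec_N L^\infty(Y)$; only after conjugating the Cartans does one invoke the Cartan-preserving rigidity theorem (Theorem \ref{merigid1}) to match $A\rtimes\G$ with $B\rtimes\La$, and finally Ioana's cocycle superrigidity \cite{Io08} to turn the resulting orbit equivalence of the restricted actions into virtual conjugacy. Your proposal skips the weak compactness step entirely, so the whole chain never gets started.

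Several intermediate claims are also unsupported as written. Property (T) of $\G$ plus the malleable deformation yields only an intertwining $L(\G)\prec_N B\rtimes\La$, i.e.\ an embedding of a corner of $L(\G)$ into a corner of $B\rtimes\La$; your stage-one conclusion that a ``symmetric argument'' produces a unitary with $uL(\G_0)u^*=L(\La_0)$ for finite-index subgroups is far stronger than anything these techniques give and is not needed --- the paper instead promotes intertwining of the larger algebras $A\rtimes\G\prec B\rtimes\La$ to unitary conjugacy using the malnormality-type control of Lemma \ref{controlnorm} and factoriality, never matching $L(\G)$ with $L(\La)$ itself. (Also, there is no ``Gaussian s-malleable deformation of $L(H^{(\G)})$ available because $H$ is amenable''; the relevant deformation is the free-product one built from $\tilde H=H*\Z$, which works for arbitrary $H$.) In stage two, the assertions that ``trace matching forces $B_0=B$'' and hence that $\rho_{|\La}$ is automatically compact are not argued and do not follow from the quasi-normalizer picture, and the closing paragraph --- modelling both actions on homogeneous spaces and invoking ``property (T) rigidity on both sides'' --- is a heuristic standing in for the cocycle superrigidity theorem that the proof actually requires.
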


\noindent{\bf Organization of the paper.}  In the first section we describe the von Neumann algebras we will be studying and the deformation that we will be using. In the second section we collect various intertwining results concerning subalgebras of von Neumann algebras arising from actions of wreath product groups. The third and fourth section are dedicated to the heart of the deformation/rigidity arguments of the paper, and focus on locating the malnormal, rigid subgroup $\Gamma$ of a wreath product
$H \wr \Gamma$. The final two sections are devoted to the proof of the main theorems.

\vskip 0.1in

\noindent {\bf Notations.} Throughout this paper all finite von Neumann algebras $M$ that we consider are equipped with a normal faithful
tracial state denoted by $\tau$. This trace induces a norm on $N$ by letting $\|x\|_2=\tau(x^*x)^{\frac{1}{2}}$ and $L^2(M)$ denotes the
$\|\cdot \|_2$-completion of $M$. A Hilbert space $\mathcal H$ is a $M$-{\it bimodule} if it carries commuting left and right Hilbert
$M$-module structures.

Given a von Neumann subalgebra $Q\subset M $ we denote by $E_{Q}:M\ra M$ the unique $\tau$-preserving {\it conditional expectation} onto $Q$.
If $e_Q$ is the orthogonal projection of $L^2(M)$ onto $L^2(Q)$ then $\langle M,e_Q\rangle $ denotes the \emph{basic construction}, i.e., the
von Neumann algebra generated by $M$ and $e_Q$ in $\mathcal B(L^2(M))$. The span of $\{xe_Qy \ | \ x,y\in M\}$ forms a dense $*$-subalgebra of
$\langle M,e_Q\rangle$ and there exists a semifinite trace $Tr:\langle N,e_Q\rangle\ra \mathbb C$ given by the formula $Tr(xe_Qy)=\tau(xy)$ for
all $x,y\in M$. We denote by $L^2\langle M,E_Q\rangle$ the Hilbert space obtained with respect to this trace.

The {\it normalizer of $Q$ inside $M$}, denoted $\mathcal N_{M}(Q)$, consists of all unitary elements $u\in \mathcal U(M)$ satisfying
$uQu^*=Q$. A maximal abelian selfadjoint subalgebra $A$ of $M$, abbreviated MASA, is called  a \emph{Cartan subalgebra} if the von Neumann
algebra generated by its normalizer in $M$, $\mathcal N_{M}(A)''$ is equal to $M$. 

If $\G\car^{\sigma}A $ is a trace preserving action by automorphisms of a countable group $\G$ on a finite von Neumann algebra $A$
 we denote by $M=A\rtimes_{\sigma}\G$ the crossed product von Neumann algebra associated with the action. When no confusion will arise we will
 drop the symbol $\sigma$.  Given a subset $F\subset \G$, we will denote by $P_F$ the
 orthogonal projection onto the closure of the span of $\{au_{\g} \ | \ a\in A;\g\in F \}$. 

Given $\omega$ a free ultrafilter on $\mathbb N$ and $(M,\tau)$ a finite von Neumann algebra we denote by $(M^{\omega},\tau^{\omega})$ its
ultrapower algebra, i.e., $M^{\omega}=\ell^{\infty}(\mathbb N ,M)/\mathcal I$ where the trace is defined as
$\tau^{\omega}((x_n)_n)=\lim_{n\ra\omega}\tau(x_n)$ and $\mathcal I$ is the ideal consisting of all $x\in \ell^{\infty}(\mathbb N, M )$ such
that $\tau^{\omega}(x^*x)=0$. Notice that $M$ embeds naturally into $M^{\omega}$ by considering constant sequences. Many times when working
with $M=A\rtimes \G$ we will consider the subalgebra $A^{\omega}\rtimes \G$ of $M^{\omega}$.

For all other notations and terminology, that we may have omitted to
explain in the paper, we refer the reader to \cite{P-gap},
\cite{PV09}, \cite{V-icm}.


\section{Malleable deformations of wreath product groups }\label{sec:wreathproductdeformations}

Let $H$ and $\Gamma$ be two countable discrete groups and assume that $I$ is a $\Gamma$-set. We denote by $H^I=\bigoplus_I H$
the infinite direct sum of $H$ indexed by the elements of $I$, which can also be viewed as the group of finitely supported
$H$-valued function on $I$, with pointwise multiplication. Next consider $\Gamma$ acting on $H^I$ by the generalized Bernoulli
shift i.e. $\rho_g((s_\iota)_{\iota\in I})=(s_{g^{-1}\iota})_{\iota\in I}$ for every $g\in \Gamma$. The corresponding semidirect
product $H^I\rtimes_\rho\Gamma=H\wr_I\Gamma$ is called the \emph{generalized wreath product} of $H$ and $\G$ along $I$. Throughout
this paper, for every $\iota \in I$ we denote its stabilizing group by $\G_{\iota}=\{g\in \G \ | \ g\iota=\iota\}$.

\vskip 0.02in

Given $(A, \tau)$ a finite von Neumann algebra, let $ H\wr_I\Gamma\curvearrowright^{\sigma}(A, \tau)$ be a trace preserving
action and denote by $M=A\rtimes_\sigma (H\wr_I\Gamma)$ the corresponding crossed product von Neumann algebra.
 One important feature of these algebras is that they admit $s$-{\it malleable deformations}, in the general sense of
 \cite{P-ICM}. More specifically, this is obtained as a combination of the Bernoulli-type malleable deformation in
 \cite{Po01a}, \cite{P1} and the free malleable deformations in \cite{Po01a}, \cite{IPP}, being very much in the spirit 
 of the malleable deformation considered in \cite{Io06}. The detailed construction is as follows.

Denote by $\tilde{H}=H*\mathbb{Z}$ and then extend $\sigma$ to an action,
still denoted by $\sigma$, $\tilde{H}\wr_I\Gamma\curvearrowright^{\sigma}(A,\tau)$ by letting the generator $u$ of $\mathbb{Z}$ to act
trivially on $(A,\tau)$. This gives rise to a crossed product von Neumann algebra $\tilde{M}=A\rtimes_{\sigma}(\tilde{H}\wr_I\Gamma)$ and
observe that $M\subset \tilde{M}$.

Seen as an element of $L\mathbb Z$, $u$ is a Haar unitary and therefore one can find a selfadjoint element $h\in L\mathbb Z$ such that
$u=\exp(ih)$. For every $t\in \mathbb {R}$, denote by $u^t=\exp(ith)\in L\mathbb {Z}$ and observe that Ad$(u^t)\in$Aut($L\tilde{H}$). We
further consider the tensor product automorphism $\theta_t=\otimes_I\text{Ad}(u^t)\in \text{Aut}(L \tilde{H}^I)$ and since $\theta_t$ commutes
with $\rho$ then it can be extended to an automorphism of $\tilde{M}$ which acts identically on the subalgebra $A\rtimes_\sigma \Gamma$.

From the definitions one can easily see that $\lim_{t\rightarrow 0}\|u^t-1\|_2=0$ and hence we have $\lim_{t\rightarrow0}\|\theta_t(x)-x\|_2=0$ for
all $x\in \tilde{M}$. Therefore, the path $(\theta_t)_{t\in\mathbb R}$ is a deformation by automorphisms of $\tilde M$.

 Next we show that $\theta_t$
admits a ``symmetry'', i.e. there exists an automorphism $\beta$ of $\tilde{M}$ satisfying the following relations:

\begin{equation}\label{801}\beta^2=1,\text{ } \beta_{|_M}=id_{|_M},\text{ } \beta\theta_t\beta=
\theta_{-t}, \text{ for all } t\in \mathbb {R}. \end{equation}

\noindent To see this, first define $\beta_{|_{LH^I}}=id_{|_{LH^I}}$ and then for every $\iota\in I$ we let $(u)_\iota$ to be the element in
$L\tilde{H}^I$ whose $\iota^{th}$-entry is $u$ and $1$ otherwise. On elements of this form we define $\beta((u)_\iota)=(u^*)_\iota$, and since
$\beta$ commutes with $\rho$, it extends to an automorphism of $L(\tilde{H}\wr_I\Gamma)$ by acting identically on $L\Gamma$. Finally, the
automorphism $\beta$ extends to an automorphism of $\tilde{M}$, still denoted by $\beta$, which acts trivially on $A$. Verifying relations
(\ref{801}) is a straightforward computation and we leave it to the reader.

For further use, we recall that all malleable deformations admitting a symmetry (i.e.
{\it s-malleable} deformations) satisfy the following ``transversality'' property:
\begin{thm}[\cite{P-gap}]\label{transversality} For all $t\in \mathbb R$ and all $x\in M$  we have that
\begin{equation*}\|\theta_{2t}(x)-x\|_2\le 2\|\theta_t(x)-E_M\circ \theta_t(x)\|_2.\end{equation*}

\end{thm}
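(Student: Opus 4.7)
The plan is to exploit the symmetry $\beta$ given by the $s$-malleability relations in (\ref{801}) to rewrite the left-hand side in a form that only involves the behavior of $\theta_t$ relative to $\beta$, and then use the fact that $\beta$ fixes $M$ pointwise to estimate it by the distance from $\theta_t(x)$ to $L^2(M)$.

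First I would derive the identity $\beta \theta_t = \theta_{-t}\beta$ by multiplying $\beta\theta_t\beta = \theta_{-t}$ on the right by $\beta$ and using $\beta^2 = \id$. Applied to $x\in M$, this gives $\beta(\theta_t(x)) = \theta_{-t}(\beta(x)) = \theta_{-t}(x)$, since $\beta_{|M}=\id_M$. Applying $\theta_t$ to both sides and using $\theta_t\theta_{-t}=\id$, this rewrites as
\begin{equation*}
\theta_{2t}(x)-x \;=\; \theta_t\bigl(\theta_t(x)\bigr)-\theta_t\bigl(\theta_{-t}(x)\bigr) \;=\; \theta_t\bigl(\theta_t(x)-\beta(\theta_t(x))\bigr).
\end{equation*}
Since $\theta_t$ is a trace-preserving automorphism of $\tilde M$, it is $\|\cdot\|_2$-isometric, so
\begin{equation*}
\|\theta_{2t}(x)-x\|_2 \;=\; \|\theta_t(x)-\beta(\theta_t(x))\|_2.
\end{equation*}

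Next, setting $y:=\theta_t(x)\in\tilde M$ and decomposing $y = E_M(y)+y_0$ with $y_0 = y - E_M(y)\in L^2(\tilde M)\ominus L^2(M)$, I use that $\beta$ acts as the identity on $M$ (hence on $L^2(M)$) to get $\beta(y)=E_M(y)+\beta(y_0)$. Subtracting and applying the triangle inequality together with the fact that $\beta$ is $\|\cdot\|_2$-isometric yields
\begin{equation*}
\|y-\beta(y)\|_2 \;=\; \|y_0-\beta(y_0)\|_2 \;\le\; \|y_0\|_2+\|\beta(y_0)\|_2 \;=\; 2\|y-E_M(y)\|_2,
\end{equation*}
which combined with the previous display gives the stated inequality.

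There is essentially no obstacle here beyond correctly manipulating the symmetry relations; the only subtle point is to observe that $\beta\theta_t\beta=\theta_{-t}$ together with $\beta_{|M}=\id$ forces $\beta(\theta_t(x))=\theta_{-t}(x)$ for $x\in M$, which is exactly what converts the asymmetric difference $\theta_{2t}(x)-x$ into a symmetric one of the form $y-\beta(y)$ to which the orthogonal decomposition argument applies.
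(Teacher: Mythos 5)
Your argument is correct, and it is essentially the original proof of Popa's transversality lemma in \cite{P-gap}, which the paper simply cites without reproducing: the relations $\beta^2=\id$, $\beta|_M=\id$, $\beta\theta_t\beta=\theta_{-t}$ together with the group property $\theta_s\theta_t=\theta_{s+t}$ turn $\theta_{2t}(x)-x$ into $\theta_t\bigl(\theta_t(x)-\beta(\theta_t(x))\bigr)$, and then the triangle inequality combined with the fact that $\beta$ is a trace-preserving (hence $\|\cdot\|_2$-isometric) automorphism fixing $L^2(M)$ pointwise yields the factor $2$. No gaps; the only implicit ingredients, namely that $(\theta_t)_t$ is a one-parameter group and that $\beta$, $\theta_t$ preserve the trace, are immediate from the construction in Section 1.
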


\section{Intertwining techniques}\label{sec:intertwining}

We review here the techniques of intertwining subalgebras in
\cite{PBetti}, \cite{P1}, which are an essential part of
deformation/rigidity theory. Given a projection $p_0\in M$ and two
subalgebras $P\subset M$ and $Q\subset p_0Mp_0$ one says that
\emph{a corner of $P$ can be embedded into $Q$ inside $M$} if there
exist nonzero projections $p\in P$ $q\in Q$, nonzero partial
isometry $v\in M$ and a $*$-homomorphism $\psi:pPp\rightarrow qQq$
such that $vx=\psi (x)v\text{, }$ for all $x\in pPp$. Throughout
this paper we denote by $P\prec_{M}Q$ whenever this property holds
and by $P\nprec_{M}Q$ otherwise.

\begin{thm}[Popa, \cite{P1}]\label{popaintertwining} Let $(M,\tau)$ be a finite von Neumann algebra with $P\subset M$, $Q\subset M$ two
subalgebras and consider the following properties:
\begin{enumerate}
\item $P\prec_{M}Q$.
\item Given any subgroup $\mathcal{G}\subset \mathcal{U}(P)$ such that $\mathcal{G}''=P$ then for all $x_1,x_2,...,x_n \in M$ and every $\epsilon>0$
there exists $u\in \mathcal{G}$ such that
\begin{equation*}\|E_Q(x_iux_j)\|_2<\epsilon\text{, } \text{ for
every } 1\leqslant i,j\leqslant n.\end{equation*}
\item Given any subgroup $\mathcal{G}\subset \mathcal{U}(P)$ such that $\mathcal{G}''=P$ there exists a sequence $u_n \in \mathcal{G}$
such that \begin{equation*}\lim_{n\rightarrow \infty}\|E_Q(xu_ny)\|_2\rightarrow 0\text{, }\text{ for every } x,y \in M. \end{equation*}

\end{enumerate}

Then one has the following equivalences:  $$non(1) \Leftrightarrow (2)\Leftrightarrow (3)$$
\end{thm}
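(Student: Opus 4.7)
The plan is to prove the chain non$(1) \Rightarrow (3) \Rightarrow (2) \Rightarrow$ non$(1)$. The implication $(3) \Rightarrow (2)$ is immediate: given any $x_1,\dots,x_n$ and $\e>0$, take $n$ large enough in the sequence supplied by $(3)$. The implication $(2) \Rightarrow$ non$(1)$ goes by contradiction. Assume both $(2)$ and $P \prec_M Q$, so there exist projections $p\in P$, $q\in Q$, a nonzero partial isometry $v \in qMp$, and a $*$-homomorphism $\psi: pPp \to qQq$ with $vx = \psi(x) v$ for all $x \in pPp$. Take $\{v, v^*\}$ together with elements spectral-cutting $p$ from $\mathcal G$ as the test set in $(2)$: for $u \in \mathcal G$, one computes that $E_Q(v\, pup \, v^*) = \psi(pup)\, vv^*$, whose $\|\cdot\|_2$-norm is $\|\psi(pup)vv^*\|_2$. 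After a standard averaging over $\mathcal G$ (possible because $\mathcal G$ generates $P$ and $p\in P$) one finds a uniform lower bound of the form $\|E_Q(v\, pup \, v^*)\|_2 \ge \delta > 0$ for every $u \in \mathcal G$, contradicting $(2)$.

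The main content is non$(1) \Rightarrow (3)$. I will prove the contrapositive: assuming non$(2)$ I will build the intertwiner. So suppose there exist $x_1,\dots,x_n \in M$ and $\e > 0$ such that for every $u \in \mathcal G$,
\begin{equation*}
\sum_{i,j} \|E_Q(x_i u x_j)\|_2^2 \ge \e^2.
\end{equation*}
The strategy is to move the problem into the basic construction $\langle M, e_Q\rangle$ equipped with the semifinite trace $\operatorname{Tr}$. Consider the positive element
\begin{equation*}
\xi = \sum_{i,j} x_j^* e_Q x_i^* \, x_i e_Q x_j \in \langle M, e_Q\rangle^+,
\end{equation*}
which has finite $\operatorname{Tr}$ because each $x_j^* e_Q x_j$ does. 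One then verifies that $\operatorname{Tr}(\xi \cdot u\xi u^*) = \sum_{i,j}\|E_Q(x_i u x_j)\|_2^{2}$ (after a bookkeeping computation using $y e_Q z = E_Q(y)e_Q z + \text{off-diagonal}$), so the hypothesis gives a uniform lower bound $\operatorname{Tr}(\xi\cdot u\xi u^*) \ge \e^2$ for all $u\in \mathcal G$.

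Now let $K$ be the weak (equivalently $\|\cdot\|_{2,\operatorname{Tr}}$-) closure of the convex hull of $\{u\xi u^* : u \in \mathcal G\}$ inside $L^2(\langle M, e_Q\rangle, \operatorname{Tr})$. Since conjugation by unitaries from $\mathcal G$ is $\|\cdot\|_{2,\operatorname{Tr}}$-isometric, $K$ is a nonempty closed convex set stable under this action, so it has a unique element $a$ of minimal $\|\cdot\|_{2,\operatorname{Tr}}$-norm, and $a$ is $\mathcal G$-central, hence $P$-central as $\mathcal G''=P$. The lower bound $\operatorname{Tr}(\xi\cdot u\xi u^*) \ge \e^{2}$ passes to the convex closure and gives $\operatorname{Tr}(\xi \cdot a) \ge \e^2 > 0$, so $a \ne 0$. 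Finally, $a$ is a $P$-central positive element of finite $\operatorname{Tr}$ in $\langle M, e_Q\rangle$, so a spectral projection $f$ of $a$ is nonzero, $P$-central, and has finite $\operatorname{Tr}$. The classical fact that every finite-$\operatorname{Tr}$ projection $f$ in $\langle M, e_Q\rangle$ is of the form $f = \sum_k w_k e_Q w_k^*$ (with $w_k$ partial isometries), combined with $P$-centrality of $f$, produces a nonzero partial isometry $v \in M$ intertwining a corner of $P$ into $Q$, which is exactly $(1)$. The hard part will be the clean identification of $v$ and $\psi$ from $f$: this uses the standard reduction that if $f \in p_0\langle M,e_Q\rangle p_0$ commutes with the image of $P$ acting on the right $Q$-module $p_0 L^2(M)$, then a reduction of $p_0 L^2(M)$ is a $P$-$Q$-subbimodule which is finitely generated as a right $Q$-module, and any such bimodule yields the intertwining partial isometry.
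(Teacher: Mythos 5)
The paper does not prove this theorem at all: it is quoted verbatim from Popa's article \cite{P1}, so the only benchmark is Popa's original argument, and your outline of the main direction (non$(2)\Rightarrow(1)$: pass to $\langle M,e_Q\rangle$, get a uniform lower bound on $\operatorname{Tr}(\xi\, u\xi u^*)$, take the minimal $\|\cdot\|_{2,\operatorname{Tr}}$-norm element $a$ of the closed convex hull of $\{u\xi u^*\}$, cut a finite-trace spectral projection $f\in P'\cap\langle M,e_Q\rangle$, and turn the finite right-$Q$-dimensional bimodule $fL^2(M)$ into $(p,q,\psi,v)$) is indeed that standard route. But as written it has problems. Your specific $\xi=\sum_{i,j}x_j^*e_Qx_i^*x_ie_Qx_j$ does not satisfy the claimed identity: since $e_Qme_Q=E_Q(m)e_Q$, one finds $\operatorname{Tr}(\xi\,u\xi u^*)=\sum_{j,l}\|c^{1/2}E_Q(x_jux_l^*)c^{1/2}\|_2^2$ with $c=E_Q(\sum_ix_i^*x_i)$, and because $c$ need not be invertible and the adjoints sit in the wrong slot, the hypothesis $\sum_{i,j}\|E_Q(x_iux_j)\|_2^2\ge\varepsilon^2$ gives no lower bound on this quantity; the standard fix is to enlarge $\{x_i\}$ to be closed under adjoints (and contain $1$) and take $\xi=\sum_ix_i^*e_Qx_i$, for which $\operatorname{Tr}(u\xi u^*\xi)=\sum_{i,j}\|E_Q(x_jux_i^*)\|_2^2\ge\varepsilon^2$. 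Also, the extraction of $v$ and $\psi$ from $f$ is only gestured at, and that is the genuine technical heart of this direction.

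There are two further gaps that are not merely cosmetic. First, your proof of $(2)\Rightarrow\mathrm{non}(1)$ fails: the correct identity is $E_Q(v\,pup\,v^*)=\psi(pup)\,E_Q(vv^*)$ (not $\psi(pup)vv^*$, since $vv^*\notin Q$ in general), and there is no uniform lower bound over all $u\in\mathcal{G}$ for the fixed test set $\{v,v^*\}$: if $\tau(p)\le 1/2$ and $p$ is equivalent in $P$ to a subprojection of $1-p$, there are unitaries $u\in\mathcal{U}(P)$ with $pup=0$, hence $E_Q(vuv^*)=0$; no averaging over $\mathcal{G}$ can repair this, because $(2)$ only demands one good $u$ for each finite test set. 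The correct argument for this direction again goes through the basic construction: from $(1)$ one produces a nonzero positive $a\in P'\cap\langle M,e_Q\rangle$ with $\operatorname{Tr}(a)<\infty$ (the projection onto $\overline{v^*Q}$ commutes with $pPp$ and has finite trace; one then passes from $pPp$ to $P$ by patching with finitely many partial isometries of $P$ carrying $p$ onto its central support, finitely many because $\tau(p)>0$), and then $\operatorname{Tr}(aua u^*)=\operatorname{Tr}(a^2)>0$ for every $u\in\mathcal{U}(P)$, while $(2)$/$(3)$ would force $\operatorname{Tr}(\xi_0u\xi_0u^*)$ to be small for finite-sum approximants $\xi_0=\sum_kx_ke_Qy_k$ of $a$ in $\|\cdot\|_{2,\operatorname{Tr}}$ --- a contradiction. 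Second, your chain never reaches $(3)$: you announce non$(1)\Rightarrow(3)$ but what you actually prove is non$(2)\Rightarrow(1)$, i.e.\ non$(1)\Rightarrow(2)$; together with $(3)\Rightarrow(2)\Rightarrow\mathrm{non}(1)$ this leaves $(2)\Rightarrow(3)$ unproved. That step is a routine diagonalization over a countable $\|\cdot\|_2$-dense subset of the unit ball of $M$ (using $\|E_Q(aub)\|_2\le\|a\|_2\|b\|$ and $\le\|a\|\,\|b\|_2$), but it must be said, and it implicitly uses separability of the predual, as does the sequential formulation of $(3)$ itself.
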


Based on this criterion, we present below a few intertwining lemmas needed in the coming sections. The first result we prove deals with
embedding of normalizers and will be used quite extensively in Section 5. Roughly speaking, given $Q$ a regular subalgebra of $ M$ with
$Q\subseteq N\subseteq M$ and $\mathcal{G}$ a subgroup of normalizers of $Q$ in $M$, if there exits a nonzero partial isometry intertwining
$\mathcal{G}''$ into $N$ then one can find a nonzero partial isometry in $M$ intertwining the (possibly larger) algebra $(\mathcal U(Q)\mathcal
G)''$ into $N$. The precise statement is the following:

\begin{lem}\label{augumentintertwining} Let $Q\subseteq N \subseteq M $ be finite von Neumann algebras such that $\mathcal{N}_{M}(Q)''=M$.
If $\mathcal{G}\subset\mathcal{N}_{M}(Q)$ is a subgroup such that $\mathcal{G}''\prec_M N$ then $(\mathcal{U}(Q)\mathcal{G})''\prec_M N$.
\end{lem}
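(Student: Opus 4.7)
My plan is to transfer the intertwining from $\mathcal G''$ to $(\mathcal U(Q)\mathcal G)''$ by an averaging argument inside the Jones basic construction $\langle M,e_N\rangle$ equipped with its canonical semifinite trace $Tr$. The key tool will be the standard reformulation of Theorem \ref{popaintertwining}: $P\prec_M N$ is equivalent to the existence of a nonzero positive element $d\in P'\cap\langle M,e_N\rangle$ of finite trace. Starting from the hypothesis $\mathcal G''\prec_M N$ I can therefore choose a nonzero projection $p\in\mathcal G'\cap\langle M,e_N\rangle$ with $Tr(p)<\infty$, and the task reduces to producing such an element that also commutes with $Q$.

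The construction I have in mind is to set $p':=E(p)$, where $E\colon\langle M,e_N\rangle\to Q'\cap\langle M,e_N\rangle$ is the unique $Tr$-preserving conditional expectation. This $E$ exists because $Q\subseteq N$ gives $e_N\in Q'\cap\langle M,e_N\rangle$, so $Tr$ restricts semifinitely to $Q'\cap\langle M,e_N\rangle$; equivalently, $E$ can be realized by Dixmier averaging over $\mathcal U(Q)$. By construction $p'\geq 0$ and $p'\in Q'$, while trace preservation forces $Tr(p')=Tr(p)\in(0,\infty)$, and hence $p'\neq 0$. The remaining step will be to verify that $p'$ commutes with $\mathcal G$: for each $g\in\mathcal G$ the inner automorphism $\Ad(g)$ of $\langle M,e_N\rangle$ preserves $Q$ (by $\mathcal G\subset\mathcal N_M(Q)$) and preserves $Tr$, so the map $x\mapsto g^{*}E(gxg^{*})g$ is another $Tr$-preserving conditional expectation onto $Q'\cap\langle M,e_N\rangle$; by uniqueness it equals $E$, yielding the equivariance $E(gxg^{*})=gE(x)g^{*}$. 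Applied to $x=p$ with $gpg^{*}=p$, this gives $gp'g^{*}=p'$. Therefore $p'\in Q'\cap\mathcal G'\cap\langle M,e_N\rangle$, which coincides with the commutant of $(\mathcal U(Q)\mathcal G)''$ inside $\langle M,e_N\rangle$; being nonzero, positive, and of finite $Tr$, it witnesses $(\mathcal U(Q)\mathcal G)''\prec_M N$.

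The one point requiring genuine care is the non-vanishing of the averaged element $p'=E(p)$, and this is where working in the basic construction pays off: trace preservation of $E$ reduces the question to the trivial $Tr(p)>0$. A direct attack via criterion (3) of Theorem \ref{popaintertwining} is tempting but awkward: if $u_n=q_ng_n\in\mathcal U(Q)\mathcal G$ were a ``separating'' sequence satisfying $\|E_N(xu_ny)\|_2\to 0$, then pulling the unitary $q_n\in N$ through $E_N$ easily yields $\|E_N(g_ny)\|_2\to 0$, but not the full two-sided bound $\|E_N(xg_ny)\|_2\to 0$ needed to contradict $\mathcal G''\prec_M N$. The basic-construction averaging neatly sidesteps this obstruction.
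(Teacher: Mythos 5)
Your route through the basic construction is genuinely different from the paper's argument and can be made to work, but as written it has a real gap at its pivot: the existence of the $Tr$-preserving conditional expectation $E$ onto $Q'\cap\langle M,e_N\rangle$. Such an $E$ exists if and only if $Tr$ restricts to a \emph{semifinite} trace on $Q'\cap\langle M,e_N\rangle$, and your justification --- that $Q\subseteq N$ puts the single finite-trace projection $e_N$ inside $Q'\cap\langle M,e_N\rangle$ --- does not give semifiniteness, which requires \emph{every} nonzero projection of $Q'\cap\langle M,e_N\rangle$ to dominate a nonzero finite-trace one. This is false in general for $Q\subseteq N\subseteq M$: take $M=L(\mathbb{F}_2)$ and $Q=N=L(a)$ the generator MASA; then $L^2(M)\ominus L^2(N)$ is a multiple of the coarse $N$-$N$ bimodule, so no nonzero $Q$-$N$ sub-bimodule of it has finite right $N$-dimension, i.e.\ $1-e_N$ dominates no nonzero finite-trace projection of $Q'\cap\langle M,e_N\rangle$ and no $Tr$-preserving expectation onto it exists. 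The same issue resurfaces in your ``Dixmier averaging'' variant: without $E$ (or some substitute), nothing prevents the minimal $\|\cdot\|_{2,Tr}$-element of $\overline{\mathrm{co}}\{\,upu^*:u\in\mathcal{U}(Q)\,\}$ from being $0$; averaging finite-trace projections in a properly infinite algebra can perfectly well kill them. The tell-tale sign is that your argument never uses the hypothesis $\mathcal{N}_M(Q)''=M$, and that hypothesis is exactly what repairs the step: since the span of $\mathcal{N}_M(Q)N$ is $\|\cdot\|_2$-dense in $L^2(M)$, any nonzero $Q$-$N$ sub-bimodule $\mathcal{H}\subseteq L^2(M)$ admits $u\in\mathcal{N}_M(Q)$ with $P_{\mathcal H}(uL^2(N))\neq 0$, and $\overline{P_{\mathcal H}(uL^2(N))}$ is a nonzero $Q$-$N$ sub-bimodule of $\mathcal H$ (here one uses $QuL^2(N)=uQL^2(N)\subseteq uL^2(N)$, i.e.\ both $u\in\mathcal{N}_M(Q)$ and $Q\subseteq N$) of right $N$-dimension at most $1$, hence of finite trace. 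With this semifiniteness in hand, $E$ exists and is unique, your equivariance-by-uniqueness step and the trace count $Tr(E(p))=Tr(p)>0$ are correct, and the proof closes.

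One further correction: the ``awkward'' direct attack you dismiss is precisely the paper's proof, and it does work. Arguing by contradiction with a sequence $x_n=a_nu_n\in\mathcal{U}(Q)\mathcal{G}$ satisfying $\|E_N(xx_ny)\|_2\to0$, one first takes $x,y$ unitaries in $\mathcal{N}_M(Q)$: then $xa_nx^*\in Q\subseteq N$ can be absorbed into $E_N$, so $\|E_N(xu_ny)\|_2=\|E_N(xa_nx^*\,xu_ny)\|_2=\|E_N(xx_ny)\|_2\to0$, and this two-sided bound extends to all $x,y\in M$ by linearity and $\|\cdot\|_2$-density precisely because $\mathcal{N}_M(Q)''=M$, contradicting $\mathcal{G}''\prec_M N$. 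So both routes hinge on the regularity hypothesis; any proof that nowhere invokes it should be treated with suspicion.
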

\begin{proof} Suppose by contradiction that we have $(\mathcal{U}(Q)\mathcal{G})''\nprec_M N$. Therefore, by Theorem \ref{popaintertwining},
there exists an infinite sequence $x_n=a_nu_n\in \mathcal{U}(Q)\mathcal{G}$ with $ a_n\in \mathcal{U}(Q)$ and $u_n\in\mathcal{G}$ such that
\begin{equation}\label{61} \lim_{n\rightarrow \infty}\|E_N(xx_ny)\|_2= 0 \text{ for all } x,y \in M.
\end{equation}
Taking $x=y=1$ in (\ref{61}) it is immediate that the sequence $(u_n)_n$ must be infinite. Below we prove that
\begin{equation}\label{60} \lim_{n\rightarrow \infty}\|E_N(xu_ny)\|_2= 0 \text{ for all } x,y \in M.
\end{equation}

Fix two arbitrary unitaries $x,y\in \mathcal{N}_{M}(Q)$. Then for all $a_n$ we have $xa_nx^* \in \mathcal{U}(Q)\subset N$ and using (\ref{61})
we deduce that:
\begin{equation*} \lim_{n\rightarrow \infty}\|E_N(xu_ny)\|_2=\lim_{n\rightarrow \infty}\|xa_nx^*E_N(xu_ny)\|_2=\end{equation*}
\begin{equation*}=\lim_{n\rightarrow \infty}\|E_N(xa_nx^*xu_ny)\|_2=\lim_{n\rightarrow \infty}\|E_N(xx_ny)\|_2= 0. \end{equation*}
The above convergence extends to all elements $x, y$ that are finite linear combinations of unitaries in $\mathcal{N}_{M}(Q)$ and furthermore,
using $\|\cdot\|_2$-approximations, to all elements $x,y$ belonging to $\mathcal{N}_M(Q)''$. Since $\mathcal{N}_M(Q)''=M$, this completes the
proof of (\ref{60}).

Finally, by Theorem \ref{popaintertwining} convergence (\ref{60}) implies  that $\mathcal{G}''\nprec_M N$ thus leading to a contradiction.
\end{proof}

The next lemma is more specialized, providing a criterion for
intertwining certain subalgebras inside von Neumann algebras arising
from actions by wreath product groups. In essence the result is a
translation of Theorem \ref{popaintertwining} in the setting of
ultrapower algebras and we include a proof only for the sake of
completeness. The reader may also consult Section 3 in \cite{P2} or
Proposition 2.1 in \cite{CP10}, for a similar arguments.

\begin{lem}\label{intertwining}Let $H\wr_I\G\car A$ be a trace preserving action on a finite von Neumann algebra $A$.
Denote by $M=A\rtimes (H\wr_I \G) $ and let $P\subset M$ be a II$_1$ subfactor such that $\mathcal N_{M}(P)'\cap M=\mathbb C 1$. If $S\subset
I$ is a subset, then  $P\prec_{M}A\rtimes H^S$ implies $P^{\omega}\subseteq (A\rtimes H^S)^{\omega}\vee M.$ When assuming $S=I$ the two
conditions are actually equivalent.

\end{lem}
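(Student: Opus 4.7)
The plan is to prove both implications via the intertwining criterion Theorem \ref{popaintertwining}; the two directions are essentially independent. Throughout, write $N:=A\rtimes H^S$.

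For the forward direction, starting from $P\prec_M N$, I will apply Theorem \ref{popaintertwining} to extract nonzero projections $p\in P,\,q\in N$, a nonzero partial isometry $v\in M$, and a $*$-homomorphism $\psi:pPp\to qNq$ with $vx=\psi(x)v$ for all $x\in pPp$, so that $v(pPp)v^*\subset N$. Using the II$_1$-factor structure of $P$, after the usual polar-decomposition and cutting manipulations I may assume $v^*v=p\in P$. Applying the ultrapower componentwise to the inclusion $v(pPp)v^*\subset N$ yields $v(pPp)^\omega v^*\subset N^\omega$, so multiplying by $v^*$ on the left and $v$ on the right gives
\begin{equation*}
(pPp)^\omega \;=\; v^*\bigl(v(pPp)^\omega v^*\bigr) v \;\subset\; v^*N^\omega v \;\subset\; N^\omega\vee M.
\end{equation*}
To promote this corner inclusion to all of $P^\omega$ I will use once more that $P$ is a II$_1$ factor: choose finitely many partial isometries $w_1,\dots,w_k\in P$ with $w_i^*w_i\leq p$, the $w_iw_i^*$ pairwise orthogonal, and $\sum_i w_iw_i^*=1$. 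Then for any $y\in P^\omega$ I expand $y=\sum_{i,j} w_i(w_i^*yw_j)w_j^*$; each $w_i^*yw_j$ lies in $(pPp)^\omega\subset N^\omega\vee M$ by the previous paragraph, while $w_i,w_j^*\in P\subset M\subset N^\omega\vee M$. Since $N^\omega\vee M$ is an algebra and the sum converges in $\|\cdot\|_2$, this forces $y\in N^\omega\vee M$.

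For the backward direction (assuming $S=I$), the decisive structural fact is that $N=A\rtimes H^I$ is $\G$-invariant in $M$ under the generalized Bernoulli shift, so $M=N\rtimes\G$ and consequently $N^\omega\vee M=N^\omega\rtimes\G$ inside $M^\omega$, where $\G$ acts on $N^\omega$ via the ultraproduct of its original action. I argue by contrapositive: suppose $P\nprec_M N$; Theorem \ref{popaintertwining} yields a sequence $u_n\in\mathcal U(P)$ with $\|E_N(xu_ny)\|_2\to0$ for all $x,y\in M$. Let $u:=(u_n)_n\in P^\omega$. For each $\g\in\G$ I compute $\|E_{N^\omega}(uu_\g^*)\|_2=\lim_\omega\|E_N(u_nu_\g^*)\|_2=0$; since every element $w\in N^\omega\rtimes\G$ satisfies $\|w\|_2^2=\sum_\g\|E_{N^\omega}(wu_\g^*)\|_2^2$ by Fourier expansion, the same identity forces $\|E_{N^\omega\vee M}(u)\|_2=0$. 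As $\|u\|_2=1$, this shows $u\notin N^\omega\vee M$, hence $P^\omega\not\subset N^\omega\vee M$.

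The main obstacle is the partial-isometry bookkeeping in the forward direction: one must verify that the intertwining partial isometry produced by Theorem \ref{popaintertwining} can be normalized so that $v^*v$ is a projection \emph{in $P$} rather than merely in $(pPp)'\cap pMp$, and check that the matrix-unit expansion converges inside the von Neumann algebra $N^\omega\vee M$. Both steps are routine consequences of $P$ being a II$_1$ factor and of $N^\omega\vee M$ being $\|\cdot\|_2$-closed in $M^\omega$. The hypothesis $\mathcal N_M(P)'\cap M=\mathbb C 1$ is natural for the applications and for compatibility with Lemma \ref{augumentintertwining}, but does not appear to be strictly needed in the argument above.
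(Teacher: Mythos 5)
Your converse direction (for $S=I$) is correct and is essentially the paper's argument in different packaging: the paper shows the unitary $a=(a_n)_n$ is orthogonal to $M(A\rtimes H^I)^\omega M$, whose span is dense in $(A\rtimes H^I)^\omega\vee M$, while you read off the vanishing of the Fourier coefficients $E_{(A\rtimes H^I)^\omega}(uu_\g^*)$; both are fine.

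The forward direction, however, has a genuine gap, and it is located exactly at the two points you flagged as "routine." From $vx=\psi(x)v$ on $pPp$ you only know $v^*v\in (pPp)'\cap pMp$, and this relative commutant need not be trivial; there is no normalization making $v^*v=p$: multiplying $v$ on the right by partial isometries of the relative commutant can move or shrink the support, never enlarge it to $p$. What the intertwining honestly yields (and what the paper extracts, in the convention $v\psi(x)=xv$) is that the support projection equals $pp'$ for some nonzero projection $p'\in P'\cap M$ (using that $P$ is a factor, so $(pPp)'\cap pMp=(P'\cap M)p$), whence $(pPp)^\omega p'\subseteq (A\rtimes H^S)^\omega\vee M$, and then, after the matrix-unit expansion over the II$_1$ factor $P$, only the cut-down inclusion $P^\omega p'\subseteq (A\rtimes H^S)^\omega\vee M$. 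Your closing remark that the hypothesis $\mathcal N_M(P)'\cap M=\mathbb C 1$ "does not appear to be strictly needed" is precisely where the proof breaks: that hypothesis is what removes $p'$. The paper conjugates by $u\in\mathcal N_M(P)$ (the right-hand side contains $M$, hence is Ad-invariant under unitaries of $M$), obtaining $P^\omega up'u^*\subseteq (A\rtimes H^S)^\omega\vee M$ for all such $u$, and then the join $p_0=\vee_u up'u^*$ lies in $\mathcal N_M(P)'\cap M=\mathbb C1$, forcing $p_0=1$. Without this step the statement itself is not expected to hold: $P\prec_M A\rtimes H^S$ only says a corner of $P$ embeds, and if $P'\cap M$ contained a nontrivial projection $p'$ with $Pp'$ sitting in a corner of $A\rtimes H^S$ while $P(1-p')$ admitted unitaries $u_n(1-p')$ with $\|E_{A\rtimes H^S}(xu_n(1-p')y)\|_2\to 0$, then your own converse-type computation would give $P^\omega\nsubseteq (A\rtimes H^S)^\omega\vee M$ despite $P\prec_M A\rtimes H^S$. (A secondary imprecision: with your convention $vx=\psi(x)v$ one has $vxv^*=\psi(x)vv^*$ with $vv^*\in\psi(pPp)'\cap qMq$, so the inclusion $v(pPp)v^*\subset N$ as stated is not quite right; this part is harmless since all you need is to land in $(A\rtimes H^S)^\omega\vee M$, but the support issue above is not.)
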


\begin{proof}

Assume $P\prec_{M}A\rtimes H^S$. Therefore one can find nonzero projections $p\in P$, $q\in A\rtimes H^S$, a $*$-homomorphism $\psi: pPp\ra
q(A\rtimes H^S)q$ and nonzero partial isometry $v\in M$ such that $v\psi(x)=xv$ for all $x\in pPp$. The last equation implies that $vv^*\in (
pPp)'\cap pMp $ and therefore we have the following
\begin{equation}\label{802}pPpvv^*=v\psi(pPp)v^*\subseteq v(A\rtimes H^S)v^*.\end{equation}

We notice that there exists nonzero projection $p'\in P'\cap M$ such that $vv^*= pp'$ and combining this with (\ref{802}) we obtain
\begin{equation}\label{101}(pPp)^{\omega}p'\subseteq (A\rtimes H^S)^{\omega}\vee M
.\end{equation}

Since $P$ is a II$_1$ factor then after shrinking the projection $p$ if necessary one may assume that $p$ has trace $\frac{1}{k}$, for some
positive integer $k$. Also, for every $1\le i,j\le k$ there exist partial isometries $e_{ij}\in P $ such that $e_{11}=p$, $e^*_{ij}=e_{ji}$,
$e_{ij}e_{ji}=e_{ii}\in \mathcal P(P)$ and $\sum_i e_{ii}=1$. If $(x_n)_n\in P^{\omega}$ then using the above relations in combination with
$p'\in P'\cap M$ we
have \begin{eqnarray}\nonumber(x_n)_n(p')_n&=&(x_np')_n=(\sum_{i,j}e_{ii}x_ne_{jj}p')_n=\sum_{i,j}(e_{i1}e_{1i}x_ne_{j1}e_{1j}p')_n\\
\label{102}&=&\sum_{i,j}(e_{i1})_n(e_{1i}x_ne_{j1})_n(p')_n(e_{1j})_n.\end{eqnarray}

One can easily see that $(e_{1i}x_ne_{j1})_n\in (pPp)^{\omega}$ and combining this with (\ref{101}) and (\ref{102}) we conclude that
$(x_n)_n(p')_n\in (A\rtimes H^S)^{\omega}\vee M$, thus showing that $P^{\omega}p'\subseteq (A\rtimes H^S)^{\omega}\vee M$.

Conjugating by $u\in\mathcal N_M(P)\subseteq \mathcal N_M(P'\cap M)$ we obtain $P^{\omega}up'u^*\subseteq (A\rtimes H^S)^{\omega}\vee M$, for
all $u\in\mathcal N_M(P)$, and hence $P^{\omega}p_0\subseteq (A\rtimes H^S)^{\omega}\vee M$ where $p_0=\vee_{u\in\mathcal N_M(P)}up'u^*\in
P'\cap M$. It is clear that $p_0$ commutes with $\mathcal N_M(P)$ and thus it belongs to $\mathcal N_M(P)'\cap(P'\cap M)$. By assumption we
have $\mathcal N_M(P)'\cap M=\mathbb C 1$ which forces $p_0=1$ and therefore $P^{\omega}\subseteq (A\rtimes H^S)^{\omega}\vee M$.

For the converse we proceed by contraposition, i.e., assuming $S=I$ we show that $P\nprec_{M}A\rtimes H^I$ implies $P^{\omega}\nsubseteq
(A\rtimes H^I)^{\omega}\vee M$. If $P\nprec_{M}A\rtimes H^I$, by Theorem \ref{popaintertwining}, there exists a sequence of unitaries
$a_n\in\mathcal U (P)$ such that for all $x,y\in M$ we have $\|E_{A\rtimes H^I}(xa_ny)\|_2\ra 0 $ as $n \ra \infty$. This implies $a\perp
M(A\rtimes H^I)^{\omega}M$, where $a=(a_n)_n\in P^{\omega}$ and since $M(A\rtimes H^I)^{\omega}M=(A\rtimes H^I)^{\omega}\vee M$ we conclude
that $P^{\omega}\nsubseteq (A\rtimes H^I)^{\omega}\vee M$.
\end{proof}

 In the following lemma we collect three situations when we have good control over intertwiners between certain subalgebras of
  von Neumann algebras arising from actions of wreath product
 groups. The result is a mild extension of Theorem 3.1 in \cite{P1}, and has exactly the same proof, which however we
 include here for the reader's convenience.

\begin{lem}\label{controlnorm}
Let $H\wr_{I}\G \curvearrowright^{\sigma}(A,\tau)$ ba a trace preserving action on a finite algebra A. Denote by $\tilde{M}=A\rtimes
_{\tilde{\sigma}}(\tilde{H}\wr_{I}\G)$, $M=A\rtimes_\sigma (H\wr_{I}\G)$ and $P=A\rtimes\G$.
\begin{enumerate}
\item Let $q\in M$ be a projection and
$Q\subset qMq$ be a von Neumann subalgebra. Assume that for every $\iota\in I$ one has $Q\nprec_{M}A\rtimes (H\wr_I \G_{\iota})$. If
$0\neq\xi\in L^2(q\tilde{M})$ satisfies $Q\xi\subset L^2(\sum_i \xi_iM)$ for some $\xi_1, ..., \xi_n\in L^2(\tilde{M})$ then $\xi\in L^2(M)$;
in particular we have $Q'\cap q\tilde{M}q\subseteq\mathcal{N}_{q\tilde{M}q}(Q)'' \subseteq M$.

\noindent If $I$ has finite stabilizers and $Q\subset qMq$ such that $Q\nprec_{M}A\rtimes H^{I}$ then we have $Q'\cap
q\tilde{M}q\subseteq\mathcal{N}_{q\tilde{M}q}(Q)\subseteq qMq$.

\item Let $q\in P$ be a projection and
$Q\subset qPq$ be a von Neumann subalgebra. Assume that for every $\iota\in I$ one has $Q\nprec_{P}A\rtimes
 \G_{\iota}$. If $0\neq\xi\in
L^2(qM)$ satisfies $Q\xi\subset L^2(\sum_i \xi_iP)$ for some $\xi_1, ..., \xi_n\in L^2(M)$ then $\xi\in L^2(P)$; in particular we have $Q'\cap
qMq\subseteq \mathcal{N}_{qMq}(Q) \subseteq P$.

\noindent If $I$ has finite stabilizers and $Q\subset qPq$ such that $Q\nprec_{P}A$ then we have $Q'\cap
qMq\subseteq\mathcal{N}_{qMq}(Q)''\subseteq qPq$.

\item Assume that $I$ has finite stabilizers and let $S \subset I$ a finite subset. If $Q\subset A\rtimes (H\wr_{\G_\iota S} \G_{\iota})$ is a subalgebra such
that $Q\nprec_MA$ then we have

\begin{equation*}\mathcal{N}_M(Q)''\prec_M A\rtimes H^I, \end{equation*}
\end{enumerate}
\end{lem}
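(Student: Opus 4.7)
All three parts follow the same mixing/bimodule scheme of Theorem 3.1 in \cite{P1}; I will outline it for (1) and then indicate the adaptations for (2) and (3).

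\emph{Part (1), main assertion.} Orthogonally decompose the $M$-bimodule $L^2(\tilde M)\ominus L^2(M)$ as $\bigoplus_w\KK_w$, indexed by nontrivial reduced ``skeletons'' $w$ with letters in $L\tilde H\ominus LH$ placed at positions of $I$, each with finite support $F(w)\subset I$. As an $M$-$M$-bimodule, $\KK_w$ is naturally isomorphic to the bimodule induced from $A\rtimes(H\wr_I\G^0_{F(w)})$ to $M$, where $\G^0_{F(w)}:=\bigcap_{\iota\in F(w)}\G_\iota$ is the pointwise stabilizer of $F(w)$. Write $\xi=\xi_0+\sum_w\xi_w$ in this decomposition and suppose some $\xi_{w_0}\neq 0$. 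Since projection onto $\KK_{w_0}$ commutes with right multiplication by $M$, the vector $Q\xi_{w_0}$ lies in a finitely generated right $M$-submodule of $\KK_{w_0}$; the standard translation of Theorem \ref{popaintertwining} for induced bimodules then forces $Q\prec_M A\rtimes(H\wr_I\G^0_{F(w_0)})$. Since $\G^0_{F(w_0)}\subseteq\G_\iota$ for any $\iota\in F(w_0)$, this contradicts the hypothesis, so $\xi=\xi_0\in L^2(M)$.

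\emph{Normalizer consequences in (1).} Given $u\in\mathcal N_{q\tilde Mq}(Q)$, apply the above with $\xi=u$ and $\xi_1=u$ (using $Qu\subseteq uQ\subseteq uM$) to conclude $u\in L^2(M)\cap\tilde M\subseteq M$, hence $u\in qMq$. The inclusion $Q'\cap q\tilde Mq\subseteq\mathcal N_{q\tilde Mq}(Q)''$ is standard: any element of $Q'\cap q\tilde Mq$ is a $\|\cdot\|_2$-limit of linear combinations of unitaries in $Q'\cap q\tilde Mq$ (polar decomposition plus spectral theorem), each of which normalizes $Q$. For the second assertion of (1), when $\G_\iota$ is finite the inclusion $A\rtimes H^I\subseteq A\rtimes(H\wr_I\G_\iota)$ has finite index $|\G_\iota|$, and under such finite-index extensions $Q\nprec_M A\rtimes H^I$ implies $Q\nprec_M A\rtimes(H\wr_I\G_\iota)$; the same proof then yields the strengthened conclusion $\mathcal N_{q\tilde Mq}(Q)\subseteq qMq$.

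\emph{Part (2).} Repeat verbatim with $(\tilde M,M)$ replaced by $(M,P)$: decompose $L^2(M)\ominus L^2(P)$ into $P$-$P$-bimodules $\KK_s$ indexed by nontrivial elements $s\in H^I$ with support $F(s)$, each induced from $A\rtimes\G^0_{F(s)}$ to $P$. A nonzero component $\xi_{s_0}$ would force $Q\prec_P A\rtimes\G^0_{F(s_0)}\subseteq A\rtimes\G_\iota$, contradicting the hypothesis; normalizer control and the finite-stabilizer strengthening via $Q\nprec_P A$ follow as in (1).

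\emph{Part (3).} This is the converse flavor: we produce a corner embedding $\mathcal N_M(Q)''\prec_M A\rtimes H^I$. Set $N=A\rtimes(H\wr_{\G_\iota S}\G_\iota)$, so $Q\subset N$. Decompose $L^2(M)$ as a right $A\rtimes H^I$-module using the coset decomposition of $\G$, singling out the ``sectors'' that intersect $\G_\iota$; the finite-stabilizer hypothesis on $I$ ensures that only boundedly many such sectors see the support of $N$. Combining $uQu^*=Q\subset N$ with $Q\nprec_M A$ and applying Theorem \ref{popaintertwining} in the contrapositive shows that the Fourier support (relative to $A\rtimes H^I$) of each $u\in\mathcal N_M(Q)$ is concentrated on this finite collection of sectors, yielding a uniform lower bound of the form $\sum_{i,j}\|E_{A\rtimes H^I}(x_iux_j)\|_2\ge\e$ over a suitable finite family $\{x_i\}\subset M$. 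Theorem \ref{popaintertwining} then gives $\mathcal N_M(Q)''\prec_M A\rtimes H^I$. The main technical hurdle, and where I expect the proof to require the most care, is making the sector/coset combinatorics uniform across all of $\mathcal N_M(Q)$ so that the $\e$ in Theorem \ref{popaintertwining}(2) can be chosen independently of the normalizer --- this is precisely where the finite-stabilizer hypothesis plays its essential role.
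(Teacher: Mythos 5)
Parts (1) and (2) of your proposal are correct and amount to a repackaging of the paper's argument: where you decompose $L^2(\tilde M)\ominus L^2(M)$ into cyclic $M$-bimodules indexed by ``skeletons'' and invoke the intertwining criterion for finite-right-dimension subbimodules of $L^2\langle M,e_K\rangle$, the paper works directly in the basic construction $\langle\tilde M,e_M\rangle$, enumerates coset representatives $\tilde\eta_i\in(\tilde H\setminus H)^I$ of $H\wr_I\G$ in $\tilde H\wr_I\G$, and runs the averaging estimate $Tr(f_nuf_nu^*)=\sum_{i,j}\|E_M(u_{\tilde\eta_i}uu_{\tilde\eta_j^{-1}})\|_2^2$ together with the coset containment $\{g:g\tilde F_j=\tilde F_i\}\subseteq\bigcup_{\kappa,\ell}g_{\kappa,\ell}\G_\kappa$. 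These are the same estimate in two guises, and your route buys nothing extra but is perfectly valid. One small imprecision: the bimodule $\overline{M\tilde\eta M}$ is induced from $A\rtimes(H\wr_{I\setminus F}\mathrm{Stab}_\G(\tilde\eta))$ rather than from $A\rtimes(H\wr_I\G^0_F)$ with $\G^0_F$ the pointwise stabilizer; since $\mathrm{Stab}_\G(\tilde\eta)$ sits in finitely many cosets of $\G^0_F\subseteq\G_\iota$, the contradiction you draw still goes through, but the identification as stated is not quite right.

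Part (3) has a genuine gap, and you have located it yourself: the assertion that $uQu^*=Q\subset A\rtimes(H\wr_{\G_\iota S}\G_\iota)$ together with $Q\nprec_M A$ forces the Fourier coefficients of every $u\in\mathcal N_M(Q)$ to concentrate on the finite set $\tilde K=\{g:\exists\,x,y\in\G_\iota S,\ gx=y\}$ is the \emph{entire content} of part (3), and no mechanism for it is given. The paper's mechanism is concrete: from $Q\nprec_M A$ and $|\G_\iota|<\infty$ one gets $Q\nprec_M A\rtimes\G_\iota$, hence a sequence $x_n\in\mathcal U(Q)$ with $\|E_{A\rtimes\G_\iota}(zx_nt)\|_2\to0$; a Fourier/support computation in the wreath product (using that for $g\notin\tilde K$ the translate $g(\G_\iota S)$ misses $\G_\iota S$, so only finitely many subwords of the cut-offs $c,d$ can produce cancellation) then shows $\|E_{A\rtimes(H\wr_{\G_\iota S}\G_\iota)}(cu_gx_nu_{\gamma^{-1}}d)\|_2\to0$ for every $g\in\G\setminus\tilde K$. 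If some normalizing unitary $u$ had $\sum_{g\in\tilde K}\|a_g\|_2<\e$, truncating $u$ to a finite set $K\setminus\tilde K$ and using $ux_nu^*\in Q\subset N$ gives $\|E_N(vx_nu^*)\|_2>1-2\e$, contradicting the asymptotic orthogonality just established. Note also that the uniformity over $\mathcal N_M(Q)$, which you flag as the main hurdle, is actually automatic in this scheme: the sequence $x_n$ and the set $\tilde K$ depend only on $Q$, $\iota$ and $S$, and the contradiction applies to any single normalizer with small $\tilde K$-mass. The real work is the asymptotic orthogonality computation itself, which your outline does not supply.
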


\begin{proof}

 Let $p$ denote the orthogonal projection of $L^2(M)$ onto the Hilbert subspace $\overline{Q\xi M}^{\|\cdot\|_2}\subset
L^2(\tilde{M})$. Note that $p\in Q'\cap q\langle \tilde{M}, e_{M}\rangle q$ and $0\neq Tr(p) < \infty$, where $Tr$ denotes the canonical trace
on $\langle \tilde{M}, e_{M}\rangle.$ To prove that $\xi\in M$ it is sufficient to show that $p\leq e_{M}$ or, equivalently, $(1 - e_{M})p(1 -
e_{M}) = 0$.

By taking spectral projections, to show that $(1 - e_{M})p(1 -
e_{M}) = 0$ it is in fact sufficient to show that if $f\in
Q'\cap\langle M, e_{P}\rangle$ is a projection such that $0\neq
Tr(f) < \infty$ and $f\le1-e_{M}$ , then $f = 0$. To this end, we
will show that $\|f\|_{2,Tr}$ is arbitrarily small.

Thus, let $\tilde{\eta}_0 = e$ and let $\tilde{\eta}_1,..., \tilde{\eta}_n,...$ be an enumeration of elements in $(\tilde{H}\setminus H)^I$
which are representatives for left cosets of $H\wr_{I}\G$ in $\tilde{H}\wr_I\G$. Next if we denote by $f_n =
\sum^n_{i=1}u_{\tilde{\eta}_i}e_{M}u_{\tilde{\eta}_i^{-1}}$ then, as $f$ has finite trace and $f\le 1 - e_M =
\sum^\infty_{i=1}u_{\tilde{\eta}_i}e_{M}u_{\tilde{\eta}_i^{-1}}$, there exists $n\in\N$ such that $\|f_nf - f\|_{2,Tr} < \epsilon\|f\|_{2,Tr}.$
Thus, if $u\in\mathcal{U}(Q)$ then

\begin{equation}\label{1} Tr(f_nuf_nu^*)\ge
Tr(ff_nfuf_nu^*)-|Tr(ff_n(1-f)uf_nu^*)|-|Tr((1-f)f_nuf_nu^*)|.\end{equation}

Since $f_nf$ is $\epsilon-$close to $f$ in the norm $\|\cdot\|_{2,Tr}$ and $f$ commutes with $u\in Q$ we deduce:

\begin{equation}\label{2} Tr(ff_nfuf_nu^*) =
Tr(f_nfuf_nfu^*)\ge(1-2\epsilon-\epsilon^2)\|f\|^2_{2,Tr}.\end{equation}

Similarly, we have:

\begin{equation*}|Tr(ff_n(1-f)uf_nu^*)| + |Tr((1-f)f_nfuf_nu^*)|\le 2\epsilon(1 +
\epsilon)\|f\|^2_{2,Tr}.\end{equation*}

Combining this with (\ref{1}) and (\ref{2}) we get that for all $u\in\mathcal{U}(Q)$ we have

\begin{equation}\label{3} Tr(f_nuf_nu^*) \ge(1 - 4\epsilon - 3\epsilon^2)\|f\|^2_{2,Tr}.\end{equation}

On the other hand a straight forward computation shows that

\begin{equation}\label{4} Tr(f_nuf_nu^*) = Tr(\sum_{i,j}u_{\tilde{\eta}_i}e_{M}u_{\tilde{\eta}_i^{-1}}uu_{\tilde{\eta}_j}e_{M}u_{\tilde{\eta}_j^{-1}}u^*) = \sum_{i,j} \|E_{M}(u_{\tilde{\eta}_i}uu_{\tilde{\eta}_j^{-1}})\|_2^2.\end{equation}

Thus, in order to prove that $\|f\|_{2,Tr}$ is small, it suffices to show that for every $\tilde{\eta}_1,\ldots , \tilde{\eta}_n\in
(\tilde{H}\setminus H)^I$ and every $\epsilon> 0$ there exists $u\in\mathcal{U}(Q)$ such that for all $1\le i,j\le n$ we have

\begin{equation}\label{35}\|E_{M} (u_{\tilde{\eta}_i}uu_{\tilde{\eta}^{-1}_j})\|_2\le\epsilon.\end{equation}

Fix an $\varepsilon>0$ and an arbitrary set $\{\tilde{\eta}_1,\tilde{\eta}_2,\tilde{\eta}_3,\ldots ,\tilde{\eta}_n\}$. For every $1\le i\le n$
denote by $\tilde{F}_i$ the support of $\eta_i$ and let $\mathcal{F}=\bigcup^n_{i=1}\tilde{F}_i\subset I$. It is easy to see that for every
$1\le i,j\le n$ we have the following containment
\begin{equation}\label{31} \{g\in \G \ | \ g\tilde{F}_j=\tilde{F}_i\}\subseteq\bigcup_{\kappa,\ell\in\mathcal{F}}\{g\in \G \ | \ g\kappa=\ell\}.
\end{equation}
Furthermore, observe that $\{g\in \G \ | \ g\kappa=\ell\}$ is either empty or equal to $g_{\kappa,\ell}\G_{\kappa}$ for a fixed element
$g_{\kappa,\ell}\in \G$ satisfying $g_{\kappa,\ell}\kappa=\ell$. When combined with (\ref{31}) it implies that for every $1\le i,j\le n$ we
have
\begin{equation*}
\{g \in \G \ | \ g\tilde{F}_j=\tilde{F}_i\}\subseteq \bigcup_{\kappa,\ell\in\mathcal{F}}g_{\kappa,\ell}\G_{\kappa},
\end{equation*}
which further implies that \begin{equation}\label{39}\{\eta g \in H\wr_I \G \ | \ \eta \in H^I; g\tilde{F}_j=\tilde{F}_i\}\subseteq
\bigcup_{\kappa,\ell\in\mathcal{F}}g_{\kappa,\ell}(H\wr_I \G_{\kappa}).
\end{equation}

Since $\mathcal{F}$ is a finite set and for every $\kappa\in \mathcal{F}$ we assumed that $Q\nprec_{P} A\rtimes_{\sigma}(H\wr_I \G_{\kappa})$
then by Theorem \ref{popaintertwining} there exists a unitary $u_{\mathcal{F},\varepsilon}\in \mathcal{U}(Q)$ such that, for all $\kappa,\ell
\in \mathcal{F}$ we have
\begin{equation*}\|E_{A\rtimes_{\sigma}H^I\rtimes \G_{\kappa}}(u_{g_{\kappa,\ell}^{-1}}u_{\mathcal{F},\varepsilon})\|_2\le\frac{\varepsilon}{|\mathcal{F}|}.\end{equation*}

Using the Fourier expansion $u_{\mathcal{F},\varepsilon}=\sum_{\eta g\in H\wr_{I}\G} a_{\eta g}u_{\eta g}$, a little computation shows that the
above inequality is equivalent to
\begin{equation}\label{37}
\sum_{\eta g\in g_{\kappa,\ell}(H\wr_I \G_{\kappa})}\|a_{\eta g}\|_2^2\le\frac{\varepsilon^2}{|\mathcal{F}|^2}\text{ for all }\kappa,\ell\in
\mathcal{F}.
\end{equation}

\noindent Next we show that the unitary $u_{\mathcal{F},\varepsilon}\in Q$ found above satisfies (\ref{35}). Indeed, employing the formula for
the conditional expectation, we obtain

\begin{equation*}\|E_{M}(u_{\tilde{\eta}_i}u_{\mathcal{F},\varepsilon}u_{\tilde{\eta}^{-1}_j})\|^2_2=\sum_{\{\eta g|\tilde{\eta}_i\eta \rho_g(\tilde{\eta}_j^{-1})\in
H^{I}\}}\|\tilde{\sigma}_{\tilde{\eta}_i}(a_{\eta g})\|^2_2\le\sum_{\{\eta g|\eta \in H^I; g\tilde{F}_j=\tilde{F}_i\}}\|a_{\eta
g}\|^2_2,\end{equation*}

\noindent and combining this with (\ref{39} ) and (\ref{37}) we have
\begin{equation*}
\|E_{P}(u_{\eta_i}uu_{\eta^{-1}_j})\|^2_2\le\sum_{\kappa,\ell\in\mathcal{F}}\sum_{\eta g\in g_{\kappa,\ell}(H\wr_I \G_{\kappa})}\|a_{\eta
g}\|^2 \le\sum_{\kappa,\ell\in \mathcal{F}}\frac{\varepsilon^2}{|\mathcal{F}|^2}=\varepsilon^2,
\end{equation*}
which finishes the proof of (1). \vskip 0.05in

The proof of the part (2) is very similar with the first one and it will be omitted.

\vskip 0.05in

Below we prove part (3). Let $\tilde {K}=\{g\in \G|\text{ exist } x,y\in \G_{\iota}S \text{ such that } g x=y\}$.

First observe that since $Q \nprec_M A$ and $\G_\iota$ is finite then $Q\nprec_MA\rtimes \G_{\iota}$. Therefore, by Theorem
\ref{popaintertwining}, there exists a sequence of unitaries $x_n\in Q$ such that for all $z,t\in M$ we have

\begin{equation*}\label{82} \lim_{n\rightarrow \infty}\|E_{A\rtimes \G_{\iota}}(zx_nt)\|_2\rightarrow 0
\end{equation*}

\noindent Using Fourier expansion we have $x_n=\sum_{\eta h}b^n_{\eta h}u_{\eta h}\in Q$ and therefore the above convergence is equivalent to
the following \begin{eqnarray}\label{84}\sum_{h \in \G_{\iota} }\|b^n_{\delta h \lambda}\|^2_2 \rightarrow 0\text{ for every }\delta, \lambda
\in \G.\end{eqnarray}

Next we prove that for all $c,d \in A\rtimes_{\sigma}(H^I)$ and $g\in\G\setminus\tilde{K}$ we have

\begin{equation*}
\lim_{n\rightarrow \infty}\|E_{A\rtimes_{\sigma}(H\wr_{\G_{\iota}F}\G_{\iota})}(cu_{g}x_nu_{\gamma^{-1}}d)\|_2 \rightarrow 0.
\end{equation*}


Using $\|\cdot\|_2$- approximations it suffices to show our claim only for elements of the form $c=c_1u_{\eta_1}$, $d=c_2u_{\eta_2 }$, where
$c_{1,2}\in A$ and $\eta_{1,2}\in H^I$. Therefore, using the expansion $x_n=\sum_{\eta h}b^n_{\eta h}u_{\eta h}\in N$, we have that:

\begin{equation*}\|E_{A\rtimes_{\sigma}(H\wr_{\G_{\iota}S}\G_{\iota})}(cu_{g}x_nu_{\gamma^{-1}}d)\|_2=
\end{equation*}

\begin{eqnarray*}
 &=&\|\sum_{\eta h}E_{A\rtimes_{\sigma}(H\wr_{\G_{\iota}F}\G_{\iota})}(c_1u_{\eta_1}u_{g}b^n_{\eta h}u_{\eta
h}u_{\gamma{-1}}c_2u_{\eta_2})\|_2\\&=& \|\sum_{\stackrel{\eta h\in H\wr_{\G_{\iota}F}\G_{\iota}}{\eta_1g\eta h \gamma^{-1}\eta_2\in
H\wr_{\G_{\iota}F}\G_{\iota}}} c_1u_{\eta_1}u_{g}b^n_{\eta h}u_{\eta h}u_{\gamma^{-1}}c_2u_{\eta_2}\|_2
\end{eqnarray*}

Since $\eta h\in H\wr_{\G_{\iota}S}\G_{\iota}$, we observe that condition $\eta_1g\eta h \gamma^{-1}\eta_2\in H\wr_{\G_{\iota}S}\G_{\iota}$ is
equivalent to $gh\g^{-1}\in \G_\iota$ and $\eta_1\rho_{g}(\eta)\rho_{g h \gamma^{-1}}(\eta_2)\in H^{\G_{\iota}S}$. Since $g\in \G \setminus
\tilde{K}$, then the later condition is equivalent to the following: There exist at most finitely many $\eta^1_k$, subwords of $\eta _1$ and
finitely many $\eta^2_l$ subwords of $\eta_2$, such that $\eta_1\rho_{g}(\eta)\rho_{g h \gamma^{-1}}(\eta_2)=e$. This is furthermore equivalent
with $\eta=\rho_{g{-1}}((\eta^{1}_k)^{-1})\rho_{ h \gamma^{-1}}((\eta^2_l)^{-1})$ and hence the above sum is equal to:
\begin{eqnarray*}
&&\|\sum_{\stackrel{h\in \G_{\iota}}{\eta=\rho_{g^{-1}}((\eta^{1}_k)^{-1})\rho_{ h
\gamma^{-1}}((\eta^2_l)^{-1})}; k,l} c_1u_{\eta_1}u_{g}b^n_{\eta h}u_{\eta h}u_{\gamma^{-1}}c_2u_{\eta_2}\|^2_2 \\
&\leq &\|c\|^2\|d\|^2\|\sum_{\stackrel{h\in \G_{\iota}}{\eta=\rho_{g^{-1}}((\eta^{1}_k)^{-1})\rho_{ h \gamma^{-1}}((\eta^2_l)^{-1});k,l}}
b^n_{\eta h}u_{\eta h}\|^2_2
\end{eqnarray*}
\begin{equation}\label{83}
 = \|c\|^2\|d\|^2\sum_{h\in \G_{\iota};k,l} \|b^n_{\rho_{g{-1}}((\eta^{1}_k)^{-1}) h
\gamma^{-1}(\eta^2_l)^{-1}\gamma}\|^2_2
\end{equation}

Since $\eta^1_k$ and $\eta^2_l$ are finite sets depending only on $c,d ,g, \gamma$ (which were fixed!) then by (\ref{84}) the sum (\ref{83})
converges to $0$ when $n\rightarrow \infty$ thus finishing the proof of the claim.

Now we continue with the proof. We proceed by contradiction so assume that $\mathcal N_M(Q)''\nprec_M A\rtimes H^I$. Fix $\e>0$ and by Theorem
\ref{popaintertwining} there exists a unitary $u=\sum_{g\in\G} a_gu_g\in\mathcal{N}_M(Q)$, with $a_g\in A\rtimes H^I$, such that
$\sum_{g\in\tilde{K}}\|a_g\|_2<\epsilon$. Furthermore, we can find a finite set $K\subset\G$ such that $\sum_{g\in\G\setminus
K}\|a_g\|^2_2<\epsilon$.

Denoting by $v=\sum_{g\in K\setminus\tilde{K}}a_gu_g$ the above inequalities imply that $\|ux_nu^*-vx_nu^*\|_2<2\epsilon$. Using this in
combination with $x_n\in A\rtimes(H\wr_{\G_{\iota}F}\G_{\iota})$ and $u\in \mathcal N_M(Q)$, a straight forward computation  shows that
\begin{equation}\label{803}\|E_{A\rtimes(H\wr_{\G_{\iota}F}\G_{\iota})}(vx_nu^*)\|_2>1-2\epsilon.\end{equation}

On the other hand we have

\begin{equation*}\|E_{A\rtimes(H\wr_{\G_{\iota}F}\G_{\iota})}(vx_nu^*)\|_2=\|\sum E_{A\rtimes(H\wr_{\G_{\iota}F}\G_{\iota})}(a_gu_gb_{\eta h}u_{\eta
h}u_\gamma^*a_\gamma^*)\|_2.\end{equation*}

Notice that, if a term in the sum above is nonzero we must have that $gh\gamma^{-1}\in\G_{\iota}$, where $g\in K\setminus\tilde{K},
h\in\G_{\iota}$. Since $K$ and $\G_{\iota}$ is finite, this means that only finitely many $\gamma$ will contribute to the sum. By our claim
above for each $g\in K\setminus\tilde{K}$ and $\gamma\in\G$ we know the above norm converges to $0$. Since there are only finitely many such
$g$ and $\gamma$ we get that

$$\|E_{A\rtimes(H\wr_{\G_{\iota}F}\G_{\iota})}(vx_nu^*)\|_2\rightarrow0.$$

\noindent This however, contradicts (\ref{803}) when letting $\epsilon$ to be sufficiently small.

\end{proof}


\section{Rigid Subalgebras of M}\label{sec:rigidsubalg}

In this section we come to the heart of the deformation/rigidity arguments of the paper. The central idea, as usual in Popa's
deformation/rigidity theory, is to use deformations to reveal the position of rigid subalgebras of von Neumann algebras $M$ arising from
actions by wreath product groups. More precisely, our main result shows that if the deformation $\theta_t$ introduced in the first section
converges uniformly to the identity on the unit ball of a diffuse subalgebra $Q$ then one can completely determine the position $Q$ inside $M$.
One consequence we derive from this is Theorem \ref{intertwiningrigid} describing all rigid diffuse subalgebras of $M$.

This result is very much in the spirit of Theorem 4.1 of \cite{P1}
and Theorem 3.6 of \cite{Io06} and in fact most of our proofs
resemble the proofs of these results. Roughly speaking, the methods
we use, employ averaging arguments in combination with the
intertwining techniques described in the previous section. \vskip
0.05in
 The following technical result can be seen as a criterion for locating subalgebras inside von Neumann algebras $M$ arising from
actions by wreath product groups.
\begin{thm}\label{intertwiningrigid2} Let $H,\G$ be countable groups and let $I$ a $\G$-set with finite stabilizers.
Let $H\wr_{I}\Gamma\curvearrowright A$ be a trace preserving action on a finite algebra $A$ and denote by $M=A\rtimes( H\wr_{I}\Gamma)$. If
$Q\subset pMp$ is a diffuse subalgebra such that $\theta_t\ra id$ uniformly on the unit ball of $Q$, then one of the following alternatives
holds:
\begin{enumerate}
\item  $Q\prec_M A\rtimes\Gamma$,
\item There exists $\iota \in I$ and a finite set $F\subset I$ such that $Q\prec_M
A\rtimes_{\sigma} (H\wr_{\G_{\iota}F} \G_{\iota})$.
\end{enumerate}

\end{thm}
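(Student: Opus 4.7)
The proof will follow the deformation/rigidity strategy of \cite[Theorem 4.1]{P1} and \cite[Theorem 3.6]{Io06}, adapted to the generalized wreath-product framework. By Theorem \ref{transversality}, the uniform convergence $\theta_t \to \mathrm{id}$ on $(Q)_1$ is equivalent to the uniform convergence of the subunital completely positive map $\phi_t := E_M \circ \theta_t$ to the identity on $(Q)_1$. A short computation using the freeness of $H$ and $\Z$ in $\tilde H = H * \Z$ shows that $\phi_t$ is diagonal in the standard Fourier basis of $L^2(M)$, namely
\begin{equation*}
\phi_t(a u_\eta u_\gamma) \;=\; |f(t)|^{2|\mathrm{supp}(\eta)|}\, a u_\eta u_\gamma,\qquad f(t) := \tau(u^t),
\end{equation*}
for every $a \in A$, every finitely supported $\eta \in H^{(I)}$ and every $\gamma \in \G$. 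In particular the eigenspaces of $\phi_t$ are indexed by $|\mathrm{supp}(\eta)|$, with eigenvalue $1$ occurring exactly on $L^2(A \rtimes \G)$.

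Next, fix $t_0 > 0$ small enough that $\|\phi_{t_0}(x) - x\|_2 < \e$ on $(Q)_1$. The classical Popa averaging argument (cf.\ \cite[Theorem 4.1]{P1}) carried out in the basic construction $\langle \tilde M, e_M \rangle$ combines this estimate with the above spectral decomposition to produce a nonzero vector $\xi \in L^2(M)$ such that $Q\xi$ lies in a finitely generated right $M$-submodule of $L^2(M)$ whose generators are basis elements $a u_\eta u_\gamma$ with $|\mathrm{supp}(\eta)| \le N$, for some integer $N = N(\e, t_0)$. By Popa's intertwining criterion (Theorem \ref{popaintertwining}), this yields $Q \prec_M A \rtimes L$, where $L \le H \wr_I \G$ is a subgroup whose elements $\eta g$ all satisfy $|\mathrm{supp}(\eta)| \le N$; in particular $L \cap H^{(I)} = H^F$ for some finite $F \subset I$, and $L$ projects into $\G$ inside the setwise stabilizer $K_F := \{g \in \G : gF = F\}$.

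Finally, run the dichotomy. If $F = \emptyset$, then $L \subseteq \G$, and hence $Q \prec_M A \rtimes \G$, which is alternative (1). Otherwise, the finite-stabilizer hypothesis on $I$ forces $K_F$ to be finite (it is a finite extension of $\bigcap_{\iota \in F} \G_\iota$, an intersection of finite groups). Choosing any $\iota \in F$ one has $F \subseteq \G_\iota F$, and a final reduction using Lemma \ref{controlnorm} fits $L$ inside $H \wr_{\G_\iota F'} \G_\iota$ for a suitable finite $F' \subset I$, giving alternative (2). The main technical obstacle is the extraction step in the middle paragraph: converting the uniform convergence of $\phi_{t_0}$ on $(Q)_1$ into a genuine bounded-support intertwiner in the basic construction. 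This is the delicate classical Popa averaging argument, and its adaptation here requires carefully tracking the support indices in $I$ so that the resulting subgroup $L$ has the combinatorial shape needed for the final dichotomy to separate alternatives (1) and (2).
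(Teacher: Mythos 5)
Your eigenvalue computation for the multiplier $E_M\circ\theta_t$ is correct (by freeness of $H$ and $\mathbb Z$ in $\tilde H$, each coordinate in the support of $\eta$ contributes a factor $|\tau(u^t)|^2$), and combined with the uniform convergence hypothesis it does yield, for suitable $t_0$ and $N$, that every $u\in\mathcal U(Q)$ has all but a small fraction of its $\|\cdot\|_2$-mass on Fourier modes $au_\eta u_g$ with $|\mathrm{supp}(\eta)|\le N$. The gap is the middle step, which you yourself flag as ``the main technical obstacle'' but do not carry out, and which is in fact the entire content of the theorem. The set of group elements $\eta g$ with $|\mathrm{supp}(\eta)|\le N$ is not a subgroup, and the closed span of the corresponding modes is not of the form $\sum_i x_iL^2(B)$ for any von Neumann subalgebra $B\subset M$; so Theorem \ref{popaintertwining} has no target to be applied to, and the averaging you invoke does not produce any subgroup $L$ ``whose elements all satisfy $|\mathrm{supp}(\eta)|\le N$''. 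The a priori possibility that the (bounded-size) supports of the unitaries of $Q$ wander over the infinite set $I$, with the $\Gamma$-components not confined near any stabilizer, is exactly what must be excluded, and the bounded-mass estimate alone does not exclude it.

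The paper excludes it by a genuinely different and longer route, none of which appears in your outline: (i) if $Q\nprec_M A\rtimes(H\wr_I\G_\iota)$ for every $\iota$, Lemma \ref{controlnorm}(1) gives $Q'\cap\tilde M\subset M$, which is what permits Popa's patching argument for the $s$-malleable deformation (using the symmetry $\beta$) to upgrade the averaged element to a partial isometry $v\in\tilde M$ with $\theta_1(x)v=vx$ for $x\in Q$ (Lemma \ref{rigidaltern}); (ii) a word-counting analysis of the Fourier expansion of $v$ inside $\tilde H=H*\mathbb Z$ --- using that nontrivial letters from $\mathbb Z$ cannot cancel, and that $\{g\in\G: g\kappa=\ell\}$ is a single coset of a finite stabilizer --- shows that if both $Q\nprec_M A\rtimes\G$ and $Q\nprec_M A\rtimes(H\wr_I\G_\iota)$ for all $\iota$, then $|\tau(\theta_1(u^*)wuw^*)|$ can be made arbitrarily small, forcing $v=0$ (Lemma \ref{positionreading}); (iii) a second localization, adapted from Ioana, based on the estimate $\|T(\xi)\|_2^2\le|\tau(u^t)|^4\|\xi\|_2^2+(1-|\tau(u^t)|^4)\|E_{A\rtimes(\tilde H\wr_{\G_\iota F}\G_\iota)}(\xi)\|_2^2$, shrinks the target from $A\rtimes(H\wr_I\G_\iota)$ to $A\rtimes(H\wr_{\G_\iota F}\G_\iota)$ with $F$ finite (Lemma \ref{controlrigidincore}). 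The relative commutant control, the $s$-malleability/patching step, and the free-product word analysis are what convert your spectral observation into intertwining of the precise form required, and they cannot be replaced by the averaging assertion as stated. (Your end-game is also slightly off target: even granting a subgroup $L$ with uniformly bounded supports, its $\G$-part lies in the setwise stabilizer of a finite set, not in a point stabilizer $\G_\iota$ as alternative (2) requires; this is repairable by a finite-coset argument, since that setwise stabilizer is finite when $I$ has finite stabilizers, but it is a further step that would need to be written out rather than attributed to Lemma \ref{controlnorm}.)
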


The proof of this theorem will result from a  sequence of lemmas.
The first one is  taken from \cite{Po01a}, \cite{P1}, but we
include a proof for completeness.

\begin{lem}\label{rigidaltern} Let $H,\G$ be countable groups and let $I$ a $\G$-set with finite stabilizers.
Let $H\wr_{I}\Gamma\curvearrowright A$ be a trace preserving action on a finite algebra $A$ and denote by $M=A\rtimes( H\wr_{I}\Gamma)$. If
$Q\subset pMp$ is a diffuse subalgebra such that $\theta_t\ra id$ uniformly on the unit ball of $Q$, then one of the following alternatives
holds:
\begin{enumerate}
\item  There exists a nonzero partial isometry $w\in \tilde M$ such that $\theta_1(x)w=wx$
for all $x\in Q$.

\item  There exists $\iota\in I$ such that $Q\prec_M A\rtimes (H\wr_I \G_{\iota})$.
\end{enumerate}
\end{lem}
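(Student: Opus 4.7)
The plan is to adopt Popa's deformation/rigidity dichotomy: assume alternative (2) fails, i.e.\ $Q \nprec_M A \rtimes (H \wr_I \G_\iota)$ for every $\iota \in I$, and derive (1). The key input is the transversality property of Theorem \ref{transversality} combined with the uniform convergence hypothesis, which yields
$$\sup_{u \in \UU(Q)} \|\theta_t(u) - E_M(\theta_t(u))\|_2 \xrightarrow{t \to 0^+} 0.$$
Setting $\Phi_t(x) := E_M(\theta_t(x))$, the UCP map $\Phi_t : Q \to pMp$ therefore approaches the inclusion uniformly on $(Q)_1$ as $t \to 0^+$ and, by $2$-positivity, becomes an approximate $*$-homomorphism there.

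The next step is to produce an intertwiner for a sufficiently small parameter. For $t_0 > 0$ small enough, a standard ultraweak/polar-decomposition argument (as in \cite{Po01a, P1}) applied to the near-multiplicativity of $\Phi_{t_0}$ yields a nonzero partial isometry $w \in \theta_{t_0}(p)\tilde M p$ satisfying $\theta_{t_0}(x) w = w x$ for all $x \in Q$. The lower bound $\|\Phi_{t_0}(u)\|_2 \to 1$ uniformly on $\UU(Q)$ is exactly what prevents this $w$ from being zero.

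Once $w$ is in hand, the bootstrap from $\theta_{t_0}$ to $\theta_1$ is a doubling argument: a direct computation using only that $\theta_t$ is a $*$-homomorphism shows that whenever $\theta_s(x)w = wx$ for all $x \in Q$, the element $W := \theta_s(w) w$ satisfies
$$\theta_{2s}(x) W = \theta_s(\theta_s(x)w)\, w = \theta_s(wx)\, w = \theta_s(w)\theta_s(x)w = W x.$$
Taking $t_0 = 2^{-n}$ with $n$ large and iterating the doubling $n$ times therefore produces the desired intertwiner for $\theta_1$, provided each intermediate $W$ is nonzero.

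The main obstacle is precisely this non-vanishing at each iterated step, and this is where Lemma \ref{controlnorm}(1) combined with the failure of (2) enters. If $W$ vanishes at some stage, then a rephrasing of the residual intertwining relation in $L^2$-terms produces a nonzero $\xi \in L^2(p \tilde M)$ with $Q\xi$ contained in a finitely generated right $M$-submodule of $L^2(\tilde M)$, but $\xi \notin L^2(M)$. By Lemma \ref{controlnorm}(1) this would force $Q \prec_M A \rtimes (H \wr_I \G_\iota)$ for some $\iota \in I$, contradicting our standing assumption. Hence the doubling can be iterated all the way to $\theta_1$, producing alternative (1).
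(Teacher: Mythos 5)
Your overall architecture is the same as the paper's: produce an intertwiner for a small parameter $t_0=2^{-n}$ by an averaging argument (the paper takes the minimal $\|\cdot\|_2$-element of $\overline{co}^w\{\theta_{t_0}(u)u^*\,|\,u\in\UU(Q)\}$, which is nonzero because $\|\theta_{t_0}(u)-u\|_2\le 1/2$ uniformly), assume alternative (2) fails so that Lemma \ref{controlnorm}(1) controls relative commutants, and then iterate the parameter up to $t=1$. Two small remarks on your first step: the appeal to Theorem \ref{transversality} is a red herring, since $\|\theta_t(u)-E_M(\theta_t(u))\|_2\le\|\theta_t(u)-u\|_2$ holds trivially ($E_M$ is the nearest-point projection) and transversality gives the \emph{reverse} estimate, which is not needed here; and the passage from near-multiplicativity of $\Phi_{t_0}$ to a nonzero partial isometry intertwiner is stated rather loosely, though it is the standard convexity-plus-polar-decomposition argument and is fine in spirit.

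The genuine gap is in your doubling step. The algebraic identity $\theta_{2s}(x)\,\theta_s(w)w=\theta_s(w)w\,x$ is correct, but the whole difficulty of the lemma is the non-vanishing of $W=\theta_s(w)w$ at each stage, and your justification does not work. Writing $q_0=w^*w$ and $e=ww^*$, one has $W=0$ exactly when $\theta_s(q_0)e=0$. Under the failure of (2), Lemma \ref{controlnorm}(1) gives $q_0\in Q'\cap p\tilde{M}p\subseteq pMp$ and, applying the automorphism $\theta_s$, also $e\in\theta_s(Q)'\cap\tilde{M}\subseteq\theta_s(M)$, say $e=\theta_s(f)$ with $f\in Q'\cap pMp$; the vanishing then just says $q_0f=0$, and since one only knows $\tau(q_0)=\tau(f)$ this is perfectly possible (e.g.\ whenever $Q'\cap pMp$ contains orthogonal projections of equal trace; replacing $w$ by $u_1w$ with $u_1$ a unitary of $\theta_s(Q)'\cap\tilde{M}$ changes $e$ but not the intertwining relation). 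Moreover, I do not see how $W=0$ would produce a vector $\xi\in L^2(p\tilde{M})\setminus L^2(M)$ with $Q\xi$ contained in a finitely generated right $M$-module: the relations at hand either concern $\theta_s(Q)$, or place $Qw^*$ inside a right $\theta_s(M)$-module, which is not what Lemma \ref{controlnorm}(1) addresses. This is precisely where s-malleability, i.e.\ the symmetry $\beta$ with $\beta|_M=\id$ and $\beta\theta_t\beta=\theta_{-t}$, must enter (this is the ``patching procedure'' of \cite{P1} that the paper invokes): set $w_1:=\theta_s\bigl(w\beta(w)^*\bigr)$. Applying $\beta$ to $\theta_s(x)w=wx$ gives $\theta_{-s}(x)\beta(w)=\beta(w)x$, whence $\theta_s(x)\,w\beta(w)^*=w\beta(w)^*\,\theta_{-s}(x)$ and, applying $\theta_s$, $\theta_{2s}(x)w_1=w_1x$ for all $x\in Q$; and since $w^*w\in M$ forces $\beta(w^*w)=w^*w$, one gets $w_1w_1^*=\theta_s\bigl(w\beta(w^*w)w^*\bigr)=\theta_s(ww^*)\neq0$, so $w_1$ is a nonzero partial isometry of the same size and the iteration reaches $t=1$. (Incidentally, your $\theta_s(w)w$ need not even be a partial isometry, whereas the $\beta$-twisted element is.) So the correct use of the relative-commutant control supplied by the failure of (2) is to make $\beta$ fix $w^*w$, not to rule out vanishing of the naive doubling.
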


\begin{proof} 
Since $\theta_t\ra id $ uniformly on the unit ball of $Q$ we can find $n\ge1$ such that \begin{equation*}\|\theta_{1/2^n}(u) - u\|\le1/2,\text{
for all }u\in\mathcal{U}(Q).\end{equation*}

Let $v$ be the minimal $\|.\|_2$ element of $\mathcal{K} = \overline{co}^w\{\theta_{1/2^n}(u)u^*|u\in\mathcal{U}(Q)\}$. Since
$\|\theta_{1/2^n}(u)u^* - 1\|_2 = 1/2$, for all $u\in\mathcal{U}(Q)$, we get that $\|v - 1\|_2 = 1/2$, thus $v\neq 0$. Also, since
$\theta_{1/2^n}(u)\mathcal{K}u^* = \mathcal{K}$ and $\|\theta_{1/2^n}(u)xu^*\|_2 = \|x\|_2$, for all $u\in\mathcal{U}(Q)$, the uniqueness of
$v$ implies that $\theta_{1/2^n}(u)v = vu$ for all $u\in\mathcal{U}(Q)$ and hence
\begin{equation}\label{5} \theta_{1/2^n}(x)v = vx, \text{ for all } x\in Q.
\end{equation}
Assume that (2) is false, i.e. for every $\iota\in I$ one has
$Q\nprec_{M}A\rtimes(H\wr_I \G_{\iota})$. Therefore part (1) of
Lemma \ref{controlnorm} implies that $Q'\cap \tilde{M}\subset M$. On
the other hand, since $\theta_t$ is a $s$-malleable deformation then
combining (\ref{5}) with the procedure from \cite{P1} of patching up
intertwiners, one can find a non-zero partial isometry
$w\in\tilde{M}$ such that $\theta_1(u)w = wu$, for all
$u\in\mathcal{U}(Q)$, which proves (1).
\end{proof}

Our  second lemma is a refinement of arguments in Section 4 of
\cite{P1}.

\begin{lem}\label{positionreading}

Let $M$ and $\tilde{M}$ be as above. Assume $Q\subset pMp $ is a von Neumann subalgebra such that there exists a nonzero partial isometry
$v\in\tilde{M}$ satisfying that $\theta_1(x)v = vx$, for all $x\in Q$. Then one of the following alternatives holds:\begin{enumerate} \item
$Q\prec_M A\rtimes_\sigma \Gamma$, \item There exits $\iota\in I$ such that $Q\prec_M A\rtimes_\sigma(H\wr_I \Gamma_\iota)$.\end{enumerate}
\end{lem}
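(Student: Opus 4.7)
I argue by contradiction, adapting the strategy of \cite[Theorem 4.1]{P1} and \cite[Theorem 3.6]{Io06}. Assume alternative (2) fails: $Q \nprec_M A \rtimes (H \wr_I \Gamma_\iota)$ for every $\iota \in I$. The goal is to deduce alternative (1). From $\theta_1(x)v = vx$ for $x \in Q$ the standard computation gives $v^*v \in Q' \cap p\tilde M p$. By Lemma \ref{controlnorm}(1) applied under the failed alternative, $Q' \cap p\tilde M p \subseteq pMp$; hence $v^*v \in M$. An analogous argument using the symmetry $\beta$ (which satisfies $\beta|_M = \mathrm{id}$ and $\beta\theta_1\beta = \theta_{-1}$) applied to the intertwiner $\theta_{-1}(x)\beta(v)=\beta(v)x$ yields $vv^* \in \theta_1(M)$.

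The algebraic core of the argument is the freeness of $\tilde H = H * \mathbb Z$: for any $h \in H \setminus \{e\}$, the element $uhu^{-1}$ is reduced of length three in the free product, hence orthogonal to $LH$ in $L^2(L\tilde H)$. Tensoring over $I$ and using that $\theta_1$ acts trivially on $A$ and $L\Gamma$, one deduces the compressed-deformation identity
\[
E_M \circ \theta_1 = E_{A \rtimes \Gamma} \qquad \text{on } M,
\]
which in particular gives $M \cap \theta_1(M) = A \rtimes \Gamma$.

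Fix a transversal $\{\eta_\alpha\}$ for the cosets of $H \wr_I \Gamma$ in $\tilde H \wr_I \Gamma$ with $\eta_0 = e$, and write $v = \sum_\alpha u_{\eta_\alpha} v_\alpha$ with $v_\alpha \in L^2(M)$. Applying $E_M$ to $\theta_1(x) v = vx$ and using the identity above yields a ``diagonal'' equation
\[
E_{A \rtimes \Gamma}(x)\, v_0 + R(x) = v_0\, x, \qquad x \in Q,
\]
where $R(x) \in L^2(M)$ gathers the contributions of $v_\alpha$ with $\alpha \neq 0$ whose image under $\theta_1(x)$ falls back into $L^2(M)$. A stabilizer-based analysis in the spirit of Lemma \ref{controlnorm}(1), using the finite-stabilizer hypothesis on $I$, forces $R \equiv 0$: otherwise a nonzero $R$ would produce a partial intertwiner of $Q$ into some $A \rtimes (H \wr_I \Gamma_\iota)$, contradicting the failure of (2). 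A symmetric Fourier argument applied to higher cosets rules out the degenerate case $v_0 = 0$, so $v_0 \in M \setminus \{0\}$ and $E_{A \rtimes \Gamma}(x) v_0 = v_0 x$ for all $x \in Q$.

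Finally, for any unitary $u_n \in \mathcal{U}(Q)$, right-unitary invariance of $\|\cdot\|_2$ combined with trace cyclicity gives
\[
\|v_0\|_2^2 = \|v_0 u_n\|_2^2 = \|E_{A \rtimes \Gamma}(u_n) v_0\|_2^2 = \tau\bigl(v_0 v_0^*\, E_{A \rtimes \Gamma}(u_n)^* E_{A \rtimes \Gamma}(u_n)\bigr) \le \|v_0 v_0^*\|_\infty \cdot \|E_{A \rtimes \Gamma}(u_n)\|_2^2,
\]
so $\|E_{A \rtimes \Gamma}(u_n)\|_2 \ge \|v_0\|_2 / \|v_0 v_0^*\|_\infty^{1/2} > 0$ uniformly in $n$. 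Theorem \ref{popaintertwining} then precludes $Q \nprec_M A \rtimes \Gamma$, establishing alternative (1). The principal technical obstacle is the stabilizer-based bookkeeping in the third paragraph: showing that any nontrivial off-diagonal Fourier contribution of the intertwiner forces $Q$ to intertwine into a stabilizer subalgebra $A \rtimes (H \wr_I \Gamma_\iota)$, which is delicate because of the interplay between the free product structure of $\tilde H$ and the generalized Bernoulli action of $\Gamma$ on $I$ with possibly nontrivial finite stabilizers.
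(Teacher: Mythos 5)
Your opening reductions are fine ($v^*v\in Q'\cap p\tilde Mp\subseteq pMp$ via Lemma \ref{controlnorm}, the symmetry trick with $\beta$, and the identity $E_M\circ\theta_1=E_{A\rtimes\Gamma}$ on $M$ is indeed correct), but the heart of your argument --- reducing everything to the trivial-coset component $v_0=E_M(v)$, claiming $R\equiv 0$, and asserting that ``a symmetric Fourier argument rules out the degenerate case $v_0=0$'' --- is a genuine gap, and the last claim is false. Concretely: take any action as in the lemma (even $A=\mathbb{C}$, $I=\Gamma$ with the translation action, so all stabilizers are trivial), pick $h\in H^I\setminus\{e\}$, and set $Q=u_h(A\rtimes\Gamma)u_h^*$ and $v=\theta_1(u_h)u_h^*$. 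Then $v$ is a unitary in $\tilde M$, $\theta_1(x)v=vx$ for every $x\in Q$, and alternative (2) fails (since $L\Gamma\nprec_M A\rtimes H^I$ when $\Gamma$ is infinite), yet $v_0=E_M(v)=E_{A\rtimes\Gamma}(u_h)u_h^*=0$. Moreover, when $L\Gamma'\cap\tilde M=\mathbb{C}$ every intertwiner for this $Q$ is a scalar multiple of this $v$, so no choice of intertwiner has nonzero expectation onto $M$ even though conclusion (1) obviously holds. The information that $Q$ embeds into $A\rtimes\Gamma$ is therefore not visible in $v_0$ at all, and your final estimate $\|E_{A\rtimes\Gamma}(u_n)\|_2\ge\|v_0\|_2/\|v_0v_0^*\|_\infty^{1/2}$ has nothing to stand on; the ``stabilizer-based analysis'' you defer to force $R\equiv 0$ and $v_0\neq 0$ cannot exist.

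Any correct proof must keep all coset components of $v$ in play, which is what the paper's argument does: approximate $v$ by a finitely supported element $w=\sum_{\tilde\eta\in S,g\in K}a_{\tilde\eta g}u_{\tilde\eta g}$, derive the uniform lower bound $|\tau(\theta_1(u^*)wuw^*)|\ge\|v\|_2^2/2$ for every $u\in\mathcal{U}(Q)$, and then expand $u$ in Fourier series. The freeness of $\tilde H=H*\mathbb{Z}$ kills the generic terms, while the terms where supports of the $\tilde\eta$'s collide force the relevant group elements into finitely many cosets $z\Gamma_t$ of stabilizers; this bounds the quantity by finitely many expressions of the two forms $\|E_{A\rtimes\Gamma}(u_a\,u\,u_b)\|_2$ and $\|E_{A\rtimes(H\wr_I\Gamma_t)}(u_c\,u\,u_d)\|_2$. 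The contradiction thus requires the failure of \emph{both} alternatives simultaneously: only then does Theorem \ref{popaintertwining} produce a unitary $u\in\mathcal{U}(Q)$ making all these terms small. Your plan of first using the failure of (2) alone to ``clean up'' the intertwiner, and only afterwards invoking Popa's criterion for $A\rtimes\Gamma$, cannot be repaired along the lines sketched; the combinatorial bookkeeping you compress into one sentence is not a technicality but the entire content of the proof.
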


\begin{proof}

Working with amplifications we can assume without loosing any generality that $p=1$. Since $v$ is a nonzero partial isometry we have that
$0<\|v\|^2_2\le 1$. Let $w$ be an element in $\tilde{M}$ such that $\|v-w\|_2<\frac{\|v\|^2_2}{4}$ and $w=\sum_{\tilde{\eta} \in S, g\in
K}a_{\tilde{\eta} g}u_{\tilde{\eta} g}$ where $S\subset \tilde{H}^I$ and $K \subset\G$ are finite sets. Using the triangle inequality we have
that

\begin{equation}\label{46}\frac{\|v\|_2^2}{2} \le |\tau(\theta_1(u^*)wuw^*)|, \text{  for all } u\in\mathcal{U}(Q).
\end{equation}

In the remaining part of the proof we show that if we assume $Q\nprec_MA\rtimes_\sigma\G$ and $Q\nprec_MA\rtimes_\sigma(H\wr_I\G_\iota)$ for
all $\iota \in I$ then we can find a unitaries $u\in\mathcal{U}(Q)$ such that $|\tau(\theta_1(u^*)wuw^*)|$ is as small as we like. When this is
combined with (\ref{46}) we get that $v=0$ which is a contradiction.

Every unitary $u\in M=A \rtimes_{\sigma} (H\wr_I\G)$ can be written as $u=\sum_{\eta g\in H\wr_I\G}a_{\eta g} u_{\eta g}$, where $a_{\eta g}\in
A$, $\eta\in H^I$ and $g\in \G$. Using these formulas one can evaluate:

\begin{equation*}|\tau(\theta_1(u^*)wuw^*)|=\end{equation*}

\begin{equation*}=|\tau\big(\sum_{\tilde{\eta},\tilde{\gamma}\in S;l,m\in K}\sum_{\xi g,\zeta k \in H\wr_I\G}
u_{g^{-1}\theta(\xi^{-1})}a^*_{\xi g}a_{\tilde{\eta} l}u_{\tilde{\eta} l}a_{\zeta k} u_{\zeta
k}u_{m^{-1}{\tilde{\gamma}^{-1}}}a^*_{\tilde{\gamma} m})|.\end{equation*}

\begin{equation*}\leq\sum_{\stackrel{\tilde{\eta},\tilde{\gamma}\in S;l,m\in K}{\xi g,\zeta k \in H\wr_I\G}}|\tau(\sigma_{g^{-1}\theta(\xi^{-1})}(a^*_{\xi g}a_{\tilde{\eta} l})\sigma_{g^{-1}\theta(\xi^{-1}) \eta l}(a_{\zeta k})\sigma_{g^{-1}\theta(\xi^{-1}) \eta l\zeta k m^{-1}{\tilde{\gamma}^{-1}}}(a^*_{\tilde{\gamma}
m})u_{g^{-1}\theta(\xi^{-1}) \tilde{\eta} l\zeta k m^{-1}{\tilde{\gamma}^{-1}}})|
\end{equation*}

\begin{equation}\label{41}
=\sum_{\stackrel{\tilde{\eta},\tilde{\gamma}\in S;l,m\in K;\xi g,\zeta k \in H\wr_I\G}{g^{-1}\theta(\xi^{-1})\tilde{\eta} l\zeta k
m^{-1}\tilde{\gamma}^{-1}=e}}|\tau(\sigma_{g^{-1}\theta(\xi^{-1})}(a^*_{\xi g}a_{\tilde{\eta l}})\sigma_{g^{-1}\theta(\xi^{-1}) \tilde{\eta}
l}(a_{\zeta k})a^*_{\tilde{\gamma} m})|
\end{equation}
Applying Cauchy-Schwartz inequality, the above quantity is smaller than

\begin{equation}\label{43}\sum_{\stackrel{\tilde{\eta},\tilde{\gamma}\in S;l,m\in K;\xi g,\zeta k \in H\wr_I\G}{g^{-1}\theta(\xi^{-1})\tilde{\eta} l\zeta k
m^{-1}\tilde{\gamma}^{-1}=e}}\|\sigma_{g^{-1}\theta(\xi^{-1})}(a^*_{\xi g}a_{\tilde{\eta }l})\|_2\|\sigma_{g^{-1}\theta(\xi^{-1}) \tilde{\eta}
l}(a_{\zeta k})a^*_{\tilde{\gamma} m}\|_2
\end{equation}
Denoting by $C=\max_{\stackrel{\tilde{\eta},\tilde{\gamma}\in S}{l,m\in K}}\{\|a_{\tilde{\eta}
l}\|_{\infty}\|a^*_{\tilde{\gamma}m}\|_{\infty}\}$ then continuing in (\ref{43}) we have that
\begin{equation}\label{44}\leq C\sum_{\stackrel{\tilde{\eta},\tilde{\gamma}\in S}{l,m\in K}} (\sum_{\stackrel{\xi,\zeta \in H^I}{g^{-1}\theta(\xi^{-1})\tilde{\eta} l\zeta k
m^{-1}\tilde{\gamma}^{-1}=e}}\|a_{\xi g} \|_2\|a_{\zeta k}\|_2)
\end{equation}

A simple computation shows that the equation $g^{-1}\theta(\xi^{-1}) \tilde{\eta} l\zeta k m^{-1}{\tilde{\gamma}^{-1}}=e$ is equivalent to
\begin{equation*}
\begin{array}{c}
                                                \rho_{g^{-1}}(\theta(\xi^{-1}) \tilde{\eta} )\rho_{g^{-1}l}(\zeta)\rho_{g^{-1}lkm^{-1}}(\tilde{\gamma}^{-1})=e \\
                                                \text{and }\\
                                                g^{-1}lkm^{-1}=e,
                                            \end{array}
\end{equation*} which is further equivalent with

\begin{equation*}\begin{array}{c}\zeta  =  \rho_{l^{-1}}(\tilde{\eta}^{-1}\theta(\xi))\rho_{km^{-1}}(\tilde{\gamma}) \\
                \text{and}  \\
                g  = lkm^{-1}.\end{array}
\end{equation*}

Next we let $\tilde{\gamma}=\tilde{\gamma}_1\gamma_2$ and $\tilde{\eta}=\tilde{\eta}_1\eta_2$ with $\eta_2,\gamma_2\in H^I$ and
$\tilde{\eta}_1,\tilde{\gamma}_1$ being either trivial or having all entries reduced words ending with a nontrivial letter from $\mathbb{Z}$.
Therefore plugging in the previous equation we obtain \begin{equation}\label{51}
\zeta=\rho_{l^{-1}}(\eta_2^{-1})\rho_{l^{-1}}(\tilde{\eta}_1^{-1})\rho_{l^{-1}}(\theta(\xi))\rho_{km^{-1}}(\tilde{\gamma}_1)\rho_{km^{-1}}(\gamma_2)\end{equation}
and since $\zeta,\rho_{l^{-1}}(\eta_2^{-1}),\rho_{km^{-1}}(\gamma_2)\in H^I$ it follows that
\begin{equation}\label{42}\rho_{l^{-1}}(\tilde{\eta}_1^{-1})\rho_{l^{-1}}(\theta(\xi))\rho_{km^{-1}}(\tilde{\gamma}_1)\in H^I.\end{equation}

Also notice there exist finite subsets, $L_{l,m,\tilde{\eta},\tilde{\gamma}}\subset\G$ and $F_{l,m,\tilde{\eta},\tilde{\gamma}}\subset I$ such
that $l^{-1} supp(\tilde{\eta}_1) \cap km^{-1}supp(\tilde{\gamma}_1)=\varnothing$ for every $k\in \G\setminus
\mathcal{F}_{l,m,\tilde{\eta},\tilde{\gamma}}$, where we denoted by $\mathcal{F}_{l,m,\tilde{\eta},\tilde{\gamma}}=\bigcup_{ z\in
L_{l,m,\tilde{\eta},\tilde{\gamma}},t\in F_{l,m,\tilde{\eta},\tilde{\gamma}} }z \G_t$.

When we combine the above paragraph with the fact that the first letter of every entry of $\rho_{l^{-1}}(\tilde{\eta}_1^{-1})$ and the last
letter of every entry of $\rho_{km^{-1}}(\tilde{\gamma}_1)$ are nontrivial elements in $\mathbb{Z}$, then (\ref{42}) implies
that\begin{equation*}\rho_{l^{-1}}(\tilde{\eta}_1^{-1})\rho_{l^{-1}}(\theta(\xi))\rho_{km^{-1}}(\tilde{\gamma}_1)=e.\end{equation*}

\noindent Hence, we obtain that for all $l,m\in K$, $\tilde{\eta},\tilde{\gamma}\in S$, $k\in \G\setminus
\mathcal{F}_{l,m,\tilde{\eta},\tilde{\gamma}}$ one has
\begin{equation}\label{45}
\begin{array}{c}\theta(\xi)=\tilde{\eta}_1\rho_{lkm^{-1}}(\tilde{\gamma}_1^{-1})\text{; }
\zeta=\rho_{l^{-1}}(\eta^{-1}_2)\rho_{km^{-1}}(\gamma_2)\\
\text{ and } \\
\theta^{-1}(\tilde{\eta}_1),\theta^{-1}(\tilde{\gamma}_1)\in H^I.
\end{array}
\end{equation}

On the other hand, it can be easily seen that for every $l,m \in K; \tilde{\eta},\tilde{\gamma}\in S, z \in L_{l,m,
\tilde{\eta},\tilde{\gamma}}$ there exist $x_{l,m,\tilde{\eta},\tilde{\gamma},z},y_{l,m,\tilde{\eta},\tilde{\gamma},z}\in H^I$ (depending only
on $l,m,\tilde{\eta},\tilde{\gamma},z$ ) such that the equation (\ref{51}) is equivalent with
\begin{equation}\label{52}\zeta=x_{l,m,\tilde{\eta},\tilde{\gamma},z}\xi y_{l,m,\tilde{\eta},\tilde{\gamma},z}.\end{equation}

\noindent Hence (\ref{44}) is equal to

\begin{equation*}=C \sum_{\stackrel{\tilde{\eta},\tilde{\gamma}\in S}{l,m\in K}} (\sum_{\stackrel{\stackrel{\xi,\zeta \in H^I}{\zeta=\rho_{l^{-1}}(\tilde{\eta}^{-1}\theta(\xi))\rho_{km^{-1}}(\tilde{\gamma})}}{\stackrel{g=lkm^{-1}}{ k\in \Gamma\setminus \mathcal{F}_{l,m,\tilde{\eta},\tilde{\gamma}}}}}\|a_{\xi g}
\|_2\|a_{\zeta k}\|_2 + \sum_{\stackrel{\stackrel{\xi,\zeta \in
H^I}{\zeta=\rho_{l^{-1}}(\tilde{\eta}^{-1}\theta(\xi))\rho_{km^{-1}}(\tilde{\gamma})}}{\stackrel{g=lkm^{-1}}{ k\in
\mathcal{F}_{l,m,\tilde{\eta},\tilde{\gamma}}}}}\|a_{\xi g}\|_2\|a_{\zeta k}\|_2),
\end{equation*}

\noindent and using relations (\ref{45}) and (\ref{52}) this is furthermore equal to

\begin{equation*}=C \sum_{\stackrel{\tilde{\eta},\tilde{\gamma}\in S}{l,m\in K}}( \sum_{k\in \G\setminus \mathcal{F}_{l,m,\tilde{\eta},\tilde{\gamma}}}\|a_{\theta^{-1}(\tilde{\eta}_1)lkm^{-1}\theta^{-1}(\tilde{\gamma}_1^{-1})}
\|_2\|a_{\rho_{l^{-1}}(\eta_2^{-1})km^{-1}\gamma_2m}\|_2
\end{equation*}

\begin{equation*}+\sum_{k=z\alpha\in \mathcal{F}_{l,m,\tilde{\eta},\tilde{\gamma}}}\sum_{\xi \in H^I }\|a_{\xi lkm^{-1}}\|_2\|a_{x_{l,m,\tilde{\eta},\tilde{\gamma},z}\xi
y_{l,m,\tilde{\eta},\tilde{\gamma},k}z}\|_2).
\end{equation*}

Splitting up the sum of the second term and using the definition for elements in $\mathcal{F}_{l,m,\tilde{\eta},\tilde{\gamma}}$ we get

\begin{equation*}=C \sum_{\stackrel{\tilde{\eta},\tilde{\gamma}\in S}{l,m\in K}}( \sum_{k\in \G\setminus \mathcal{F}_{l,m,\tilde{\eta},\tilde{\gamma}}}\|a_{\theta^{-1}(\tilde{\eta}_1)lkm^{-1}\theta^{-1}(\tilde{\gamma}_1^{-1})}
\|_2\|a_{\rho_{l^{-1}}(\eta_2^{-1})km^{-1}\gamma_2m}\|_2
\end{equation*}

\begin{equation*}+\sum_{z\in L_{l,m,\tilde{\eta},\tilde{\gamma}}} \sum_{t\in
F_{l,m,\tilde{\eta},\tilde{\gamma}}} \sum_{\alpha\in \Gamma_t}\sum_{\xi \in H^I }\|a_{\xi
lkm^{-1}}\|_2\|a_{x_{l,m,\tilde{\eta},\tilde{\gamma},z}\xi y_{l,m,\tilde{\eta},\tilde{\gamma},z}k}\|_2)
\end{equation*}

Above note that $k=z\alpha$. Now by Cauchy-Schwarz inequality we have that

\begin{equation*}\leq C \sum_{\stackrel{\tilde{\eta},\tilde{\gamma}\in S}{l,m\in K}}( \sum_{k\in \G\setminus \mathcal{F}_{l,m,\tilde{\eta},\tilde{\gamma}}}\|a_{\theta^{-1}(\tilde{\gamma}_1)lkm^{-1}\theta^{-1}(\tilde{\gamma}_1^{-1})}
\|^2_2)^{\frac{1}{2}}(\sum_{k\in \G\setminus
\mathcal{F}_{l,m,\tilde{\eta},\tilde{\gamma}}}\|a_{\rho_{l^{-1}}(\eta_2^{-1})k\rho_{m^{-1}}(\gamma_2)}\|^2_2)^{\frac{1}{2}} +
\end{equation*}

\begin{equation*}+\sum_{\stackrel{z\in L_{l,m,\tilde{\eta},\tilde{\gamma}}}{t\in F_{l,m,\tilde{\eta},\tilde{\gamma}}}} (\sum_{\stackrel{\alpha\in \Gamma_t}{\xi \in H^I}}\|a_{\xi lkm^{-1}}\|^2_2)^{\frac{1}{2}}
(\sum_{\stackrel{\alpha\in \Gamma_t}{\xi \in H^I}}\|a_{x_{l,m,\tilde{\eta},\tilde{\gamma},z}\xi
y_{l,m,\tilde{\eta},\tilde{\gamma},z}k}\|^2_2)^{\frac{1}{2}})
\end{equation*}

\begin{equation*}\leq C \sum_{\stackrel{\tilde{\eta},\tilde{\gamma}\in S}{l,m\in K}}(\|E_{A\rtimes \G}(u_{\theta^{-1}(\tilde{\eta}_1^{-1})}uu_{\theta^{-1}(\tilde{\gamma}_1)})\|_2\|E_{A \rtimes \G}(u_{\rho_{l^{-1}}(\eta_2)}uu_{\rho_{m^{-1}}(\gamma_2)})\|_2
\end{equation*}

\begin{equation}\label{47}+\sum_{\stackrel{z\in L_{l,m,\tilde{\eta},\tilde{\gamma}}}{t\in F_{l,m,\tilde{\eta},\tilde{\gamma}}}}\|E_{A\rtimes (H\wr_I \G_t))}(uu_{m^{-1}z^{-1}l^{-1}})\|_2\|E_{A\rtimes (H\wr_I \Gamma_t)}(u_{x^{-1}_{l,m,\tilde{\eta},\tilde{\gamma},z}}uu_{k^{-1}y^{-1}_{l,m,\tilde{\eta},\tilde{\gamma},z}}u_z)\|_2)
\end{equation}

If one assumes that the conclusion does not hold, i.e. $Q\nprec_{M}A\rtimes H^{I}$ and $Q\nprec_{M}A\rtimes\Gamma$, then by Theorem
\ref{popaintertwining} there exist a unitaries $u\in \mathcal{U}(Q)$ such that for all $l,m,\tilde{\eta}, \tilde{\gamma}$ we can make the terms
in the above sums as small as we like. In particular, using inequality (\ref{47}) there exits a unitaries $u\in \mathcal{U}(Q)$ such that the
quantity $|\tau(\theta_1(u^*)wu w^*)|$ is arbitrary small thus proving our claim.\end{proof}

So for the proof of the main theorem, if $\theta_t\ra id$  converges
uniformly on the unit ball of $Q$, then by the above two lemmas we
have that either $Q\prec_M A\rtimes\Gamma$ or $Q\prec_M
A\rtimes(H\wr_I \G_{\iota})$ for some $\iota \in I$. In the second
case, we can view $Q$ as embedded in a corner of $A\rtimes(H\wr_I
\G_{\iota})$ and since $\theta_t$ converges uniformly to $id_Q$ then
an averaging argument shows that $\theta_t$ must be implemented by a
partial isometry $v$ in $\tilde M$. Looking closely, it would seem
that $v$ would have to conjugate each coordinate of $H^I$ by $u$,
since this is exactly what $\theta_t$ does. However, the only way
for this to happen would be if the algebra $Q$ would be supported on
$H^F$, for some finite set $F\subset I$. In fact we show below this
is indeed the case. In order to do so, and thus finish the proof of Theorem
\ref{intertwiningrigid}, we need the following lemma whose proof is a straightforward 
adaption of the proof of Theorem 3.6 (ii) in \cite{Io06}. We include full details though, 
for the reader's convenience.

\begin{lem}\label{controlrigidincore} Let $\tilde M$ and $M$ as in Theorem \ref{intertwiningrigid} and let $N\subset p(A\rtimes(H\wr_{I}\G_{\iota}))p$
such that $\theta_t\rightarrow id$ on $N$.  Then one can find a finite set $F\subset I$ such that $N\prec_M
A\rtimes(H\wr_{\G_{\iota}F}\G_{\iota})$.
\end{lem}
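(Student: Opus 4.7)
The plan is to adapt the proof of Theorem~3.6(ii) from \cite{Io06} to the present setting.

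First, I would use the uniform convergence $\theta_t\to\id$ on the unit ball of $N$ together with the $s$-malleability of the deformation $(\theta_t)$. Popa's averaging argument (exactly as in the proof of Lemma~\ref{rigidaltern}) produces the $\|\cdot\|_2$-minimizer of $\overline{\mathrm{co}}^w\{\theta_{1/2^n}(u)u^*:u\in\mathcal U(N)\}$, a nonzero element $v$ satisfying $\theta_{1/2^n}(x)v=vx$ for all $x\in N$; the standard $s$-malleable patching trick then upgrades $v$ to a nonzero partial isometry $w\in\tilde M$ with $\theta_1(x)w=wx$ for every $x\in N$.

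Second, I would Fourier-expand $w=\sum_{\tilde\eta\in\tilde H^I,\,g\in\G}c_{\tilde\eta g}u_{\tilde\eta g}$ with $c_{\tilde\eta g}\in A$ and analyze the intertwining equation. Since $\theta_1$ implements coordinate-wise conjugation by the Haar unitary $u$, while elements $x\in N\subseteq A\rtimes(H\wr_I\G_\iota)$ have Fourier support on $H^I\rtimes\G_\iota$---so each letter of $\eta$ lies on the $H$-side of $\tilde H=H*\Z$---the equation $\theta_1(x)w=wx$ decomposes coordinate-by-coordinate on $I$. A computation parallel to that of Lemma~\ref{positionreading} will yield, for each ``deformation support'' $S(\tilde\eta):=\{i\in I:\tilde\eta_i\notin H\}$, quantitative orthogonality relations linking the $c_{\tilde\eta g}$ to the Fourier coefficients of $x\in N$.

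Third, combining these orthogonality relations with the finiteness of $\G_\iota$ (from the finite-stabilizer hypothesis on $I$) and the square-summability $\sum\|c_{\tilde\eta g}\|_2^2<\infty$, I would show that there exists a common finite set $F_0\subset I$ containing $S(\tilde\eta)$ whenever $c_{\tilde\eta g}\neq 0$. Setting $F=\G_\iota F_0$ (still finite), this gives $w\in L^2(A\rtimes(\tilde H\wr_F\G_\iota))$, and the intertwining $\theta_1(x)w=wx$ together with the usual endpoint argument then produces a partial isometry witnessing $N\prec_M A\rtimes(H\wr_{F}\G_\iota)$ via Theorem~\ref{popaintertwining}.

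The hard part will be the localization step in the third paragraph. The key combinatorial ingredient---exactly as in Ioana's original proof---is the free-product structure of $\tilde H=H*\Z$, which prevents $\Z$-letters arising in the ``deformed'' coordinates of $\tilde\eta$ from being absorbed into the $H$-valued Fourier coefficients of elements $x\in N$. This, together with the $L^2$-summability of the $c_{\tilde\eta g}$, forces the orthogonality conditions to accumulate past any finite bound, ultimately compelling $S(\tilde\eta)$ to remain within a fixed finite subset of $I$.
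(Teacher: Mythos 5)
There is a genuine gap, and it sits exactly where you expect it: in the localization step. Your plan asserts that the intertwining relation, freeness of $\tilde H=H*\Z$, and square-summability of the coefficients $c_{\tilde\eta g}$ force \emph{all} nonzero coefficients of $w$ to have deformation support inside one fixed finite set $F_0\subset I$, i.e.\ that $w$ itself lies in $L^2(A\rtimes(\tilde H\wr_{\G_\iota F_0}\G_\iota))$. This is both unproved and not the right statement: square-summability never forces the supports of nonzero coefficients to stay inside a fixed finite set, and the freeness constraint only ties the deformed coordinates of $w$ to the ($H$-valued, possibly infinitely spread) Fourier data of elements of $N$, which gives no finite bound. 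The paper's argument (following Ioana) never localizes $w$ exactly. Instead one fixes a single $g\in\G$ with $v_g\neq 0$ in the expansion $v=\sum_g v_gu_g$, uses finiteness of $\G_\iota$ to note that only the finitely many coefficients $v_h$ with $h\in \G_\iota g\cup g\G_\iota$ enter the $u_g$-component of the relation $\theta_t(x)v=vx$, and approximates these within $\e$ by elements of $A\rtimes\tilde H^F$ for some finite $F$ (which is automatic for any $L^2$ vectors and is where $F$ comes from). The key analytic step you are missing is then the estimate $\|T(\xi)\|_2^2\le|\tau(u^t)|^4\|\xi\|_2^2+(1-|\tau(u^t)|^4)\|E_{A\rtimes(\tilde H\wr_{\G_\iota F}\G_\iota)}(\xi)\|_2^2$ for the projection $T$ onto $L^2(\theta_t(A\rtimes(H\wr_{I\setminus\G_\iota F}\G_\iota)))\overline\otimes L^2(A\rtimes(\tilde H\wr_{\G_\iota F}\G_\iota))$, which yields a uniform lower bound $\|E_{A\rtimes(\tilde H\wr_{\G_\iota F}\G_\iota)}(v_g\sigma_g(x))\|_2\ge c$ over $x\in\mathcal U(N)$; after truncating $v$ in word length one concludes with Theorem \ref{popaintertwining}. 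Without some quantitative substitute of this kind, your third paragraph does not go through.

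A secondary problem is your very first move: upgrading the averaging intertwiner from $\theta_{1/2^n}$ to a $\theta_1$-intertwiner. Popa's patching trick needs control of $N'\cap\tilde M$ (in Lemma \ref{rigidaltern} this comes from assuming $Q\nprec_M A\rtimes(H\wr_I\G_\iota)$ for every $\iota$, which via Lemma \ref{controlnorm}(1) gives $Q'\cap\tilde M\subset M$). Here that assumption is exactly unavailable: $N$ is \emph{contained} in $A\rtimes(H\wr_I\G_\iota)$, so $N'\cap\tilde M$ need not sit in $M$ and the patched partial isometry may vanish. The paper deliberately works with the intertwiner at the small parameter $t$ produced by the averaging argument; your proof should do the same, since nothing in the subsequent estimate requires $t=1$.
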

\begin{proof}

Notice that, since $\theta_t\rightarrow id$ uniformly on $(N)_1$ then  by Lemma \ref{rigidaltern} there exists $t>0$ and a nonzero partial
isometry $v\in \tilde{M}$ such that \begin{equation}\label{401}\theta_t(x)v=vx\text{ for all }x\in N.\end{equation}

Consider the Fourier expansion $v=\sum_{\tilde{\eta}g\in \tilde{H}\wr_I\Gamma} a_{\tilde{\eta}g}u_{\tilde{\eta}g}$ and letting
$v_g=\sum_{\tilde{\eta}\in\tilde{H}^I}a_{\tilde{\eta}g}u_{\tilde{\eta}}\in A\rtimes_\sigma \tilde{H}^I$ we have that
$v=\sum_{g\in\Gamma}v_gu_{g}$.

Fix $g\in\Gamma$ such that $v_g\neq0$ and denote by $n$ the cardinality of the stabilizing group $\G_\iota$.

 We know that given any $\epsilon>0$ and any $k\in \G_\iota g$ and $h\in g \G_\iota$ we can find a finite set $F\subset I$ and a finite collection $v_k', v_h'\in
A\rtimes \tilde{H}^F$, such that $\|v_k-v_k'\|_2<\frac{\epsilon}{2n}$, and $\|v_h-v_h'\|_2<\frac{\epsilon}{2n}$. If we let $x=\sum_{\gamma\in
\G_\iota}x_\gamma u_\gamma\in N $ then, identifying the $u_g$ coefficient on both sides of equation (\ref{401}), we have

\begin{eqnarray*}\sum_{\gamma k=g}\theta_t(x_\gamma)\sigma_\gamma(v_k)=\sum_{h\gamma=g}v_h\sigma_h(x_\gamma)\text{ for all }x\in N.\end{eqnarray*}

Combining this with the above inequalities we obtain

\begin{eqnarray}\label{402}\|\sum_{\gamma k=g}\theta_t(x_\gamma)\sigma_\gamma(v_k')-\sum_{h\gamma=g}v_h'\sigma_h(x_\gamma)\|_2<2\epsilon\text{ for all }x\in (N)_1.\end{eqnarray}

Since $\sum_{\gamma k=g}\theta_t(x_\gamma)\sigma_\gamma(v_k')\in\mathcal{H}=L^2(\theta_t(A\rtimes (H\wr_{I\setminus
\G_{\iota}F}\G_{\iota})))\overline{\otimes} L^2(A\rtimes(\tilde{H}\wr_{\G_{\iota}F}\G_{\iota}))$, if we let $T$ be the orthogonal projection
onto $\mathcal{H}$, then combining the above with triangle inequality we obtain

\begin{eqnarray*}\|T(\sum_{h\gamma=g}v_h'\sigma_h(x_\gamma))-\sum_{h\gamma=g}v_h'\sigma_h(x_\gamma)\|_2<4\epsilon\text{ for all }x\in (N)_1.\end{eqnarray*}

On the other hand for every $x\in L(H)$ we have $E_{\theta_t(L(H))}(x)=|\tau(u_t)|^2\theta_t(x)$ and therefore a little computation shows that
for all $\xi\in L^2(A\rtimes H\wr_{I\setminus \G_{\iota}F}\G_{\iota})\overline{\otimes}L^2(A\rtimes\tilde{H}\wr_{\G_{\iota}F}\G_{\iota})$ we
have

$$\|T(\xi)\|_2^2\le|\tau(u_t)^4|\|\xi\|_2^2+(1-|\tau(u_t)|^4)\|E_{A\rtimes\tilde{H}\wr_{\G_{\iota}F}\G_{\iota}}(\xi)\|_2^2.$$

Using the last inequality for $\xi=\sum_{h\gamma=g}v_h'\sigma_h(x_\gamma)$ in combination with (\ref{402}) we get that for all
$x\in\mathcal{U}(N)$ we have

\begin{eqnarray*}&&\|E_{A\rtimes(\tilde{H}\wr_{\G_{\iota}F}\G_{\iota})}(\sum_{h\gamma=g}v_h'\sigma_h(x_\gamma))\|_2^2\\
&\ge&(1-|\tau(u_t)|^4)^{-1}[(\|\sum_{h\gamma=g}v_h'\sigma_h(x_\gamma)\|_2-4\epsilon)^2-|\tau(u_t)|^4\|\sum_{h\gamma=g}v_h'\sigma_h(x_\gamma)\|^2_2]\\
&=&\|\sum_{h\gamma=g}v_h'\sigma_h(x_\gamma)\|^2_2- (1-|\tau(u_t)|^4)^{-1}(8\epsilon\|\sum_{h\gamma=g}v_h'\sigma_h(x_\gamma)\|_2-16\epsilon^2)\\
&\ge&(\|\sum_{h\gamma=g}v_h\sigma_h(x_\gamma)\|_2-\epsilon)^2-
(1-|\tau(u_t)|^4)^{-1}(8\|\sum_{h\gamma=g}v_h'\sigma_h(x_\gamma)\|_2\epsilon-16\epsilon^2).
\end{eqnarray*}

Choosing $\epsilon$ sufficiently small one can find a element $g\in\G$ and a constant $c>0$ such that for all $x\in \mathcal U(N)$ we have
\begin{eqnarray*}
\|E_{A\rtimes\tilde{H}\wr_{\G_{\iota}F}\G_{\iota}}(\theta_t(x)vu_{g}^*)\|_2&=&\|E_{A\rtimes\tilde{H}\wr_{\G_{\iota}F}\G_{\iota}}(vxu_{g}^*)\|_2\\
&=&\|E_{A\rtimes\tilde{H}\wr_{\G_{\iota}F}\G_{\iota}}(v_g\sigma_g(x))\|_2\ge c.
\end{eqnarray*}
 This implies
$\|E_{A\rtimes(\tilde{H}\wr_{\G_{\iota}F}\G_{\iota})}(x\theta_{-t}(v)u_{g}^*)\|_2\ge c$ and by expanding $F$ to a larger finite set if
necessary, we can find $v'\in A\rtimes(\tilde{H}\wr_{\G_{\iota}F}\G_{\iota})$ with $v'$ close to $v$ in $\|\cdot\|_2$ such that

\begin{equation*}\|E_{A\rtimes\tilde{H}\wr_{\G_{\iota}F}\G_{\iota}}(x\theta_{-t}(v')u_{g}^*)\|_2\ge \frac{c}{2}.\end{equation*}

Now if we further truncate $v'$ such that it is supported on elements of $\tilde{H}\wr_{\G_{\iota}F}\G_{\iota}$ with bounded world length in
$\tilde{H}$ then we can find elements $a_1,...,a_n\in M$ with

\begin{eqnarray*}\sum_i\|E_{A\rtimes(H\wr_{\G_{\iota}F}\G_{\iota})}(xa_i)\|_2\ge \frac{c}{4},\end{eqnarray*}

\noindent and therefore by Theorem \ref{popaintertwining} we have $N\prec_M A\rtimes (H\wr_{\G_{\iota}F}\G_{\iota})$.\end{proof}

Applying Theorem \ref{intertwiningrigid2} in the context of rigid, i.e. property (T), subalgebras of $M$ we obtain the following structural result

\begin{thm}\label{intertwiningrigid} Let $H,\G$ countable groups and let $I$ a $\G$-set with finite stabilizers.
Let $H\wr_{I}\Gamma\curvearrowright A$ be a trace preserving action on a finite algebra $A$ and denote by $M=A\rtimes( H\wr_{I}\Gamma)$. If
$Q\subset pMp$ is a diffuse rigid subalgebra then one of the following alternatives holds:
\begin{enumerate}
\item  $Q\prec_M A\rtimes\Gamma$,
\item There exists $\iota \in I$ and a finite set $F\subset I$ such that $Q\prec_M
A\rtimes (H\wr_{\G_{\iota}F} \G_{\iota})$.
\end{enumerate}
\end{thm}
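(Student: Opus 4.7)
The strategy is direct: Theorem \ref{intertwiningrigid2} already delivers the two alternatives in the conclusion under the hypothesis that $\theta_t \to \mathrm{id}$ uniformly on the unit ball of $Q$. So the whole task is to show that rigidity of the inclusion $Q \subset M$ automatically upgrades the pointwise convergence of $\theta_t$ to this uniform convergence, and then cite the previous theorem.

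First I would recall that a diffuse rigid subalgebra $Q \subset pMp$ is one for which $Q \subset pMp$ has the relative property (T) in Popa's sense, i.e.\ given any $\varepsilon > 0$ there exist $\delta > 0$ and a finite subset $F \subset pMp$ such that whenever $\phi : pMp \to pMp$ is a subunital, subtracial, completely positive map with $\|\phi(x) - x\|_2 < \delta$ for all $x \in F$, one has $\sup_{q \in (Q)_1} \|\phi(q) - q\|_2 \le \varepsilon$. I would apply this criterion to the family of trace-preserving, unital, completely positive maps
\[
\phi_t = E_{pMp} \circ \theta_t|_{pMp} : pMp \to pMp,\qquad t \in \mathbb R,
\]
obtained by compressing the malleable deformation constructed in Section \ref{sec:wreathproductdeformations}. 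Since $\lim_{t\to 0}\|\theta_t(x)-x\|_2 = 0$ for every $x \in \tilde M$, and since the conditional expectation is $\|\cdot\|_2$-contractive, the maps $\phi_t$ converge pointwise to $\mathrm{id}_{pMp}$ in $\|\cdot\|_2$. By rigidity, $\phi_t \to \mathrm{id}$ \emph{uniformly} on $(Q)_1$.

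To pass from uniform convergence of $\phi_t$ to uniform convergence of $\theta_t$ itself, I would invoke the transversality property of Theorem \ref{transversality}: for any $x \in pMp$,
\[
\|\theta_{2t}(x) - x\|_2 \;\le\; 2\,\|\theta_t(x) - E_{pMp}(\theta_t(x))\|_2 \;=\; 2\,\|\theta_t(x) - \phi_t(x)\|_2.
\]
Because $\theta_t$ is a trace-preserving automorphism, $\|\theta_t(x) - \phi_t(x)\|_2^2 = \|\theta_t(x)\|_2^2 - \|\phi_t(x)\|_2^2 = \|x\|_2^2 - \|\phi_t(x)\|_2^2$, which tends to $0$ uniformly on $(Q)_1$ once $\|\phi_t(x) - x\|_2 \to 0$ does. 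Hence $\theta_t \to \mathrm{id}$ uniformly on the unit ball of $Q$.

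At this point the hypothesis of Theorem \ref{intertwiningrigid2} is fully verified, and applying it produces exactly the dichotomy $Q \prec_M A \rtimes \Gamma$ or $Q \prec_M A \rtimes (H \wr_{\Gamma_\iota F} \Gamma_\iota)$ for some $\iota \in I$ and some finite $F \subset I$. The main substance of the argument has already been carried out in Theorem \ref{intertwiningrigid2} (via Lemmas \ref{rigidaltern}, \ref{positionreading}, and \ref{controlrigidincore}); the only thing to verify here is the translation of abstract rigidity into uniform $\|\cdot\|_2$-control of the deformation $\theta_t$, and the one non-routine input in doing so is the transversality inequality, which handles the loss incurred by replacing the automorphism $\theta_t$ by its compression $\phi_t$ into $M$.
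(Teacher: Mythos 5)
Your proof is correct and follows the same overall strategy as the paper: both reduce to Theorem \ref{intertwiningrigid2} by showing that rigidity upgrades the pointwise convergence $\theta_t\rightarrow \mathrm{id}$ to uniform convergence on the unit ball of $Q$. The implementation of that step differs, though. The paper notes that rigidity of $Q\subset pMp$ entails rigidity of the inclusion $Q\subset p\tilde{M}p$ and then tests the rigidity property directly against the maps $\theta_t$ on $\tilde{M}$, obtaining uniform convergence of $\theta_t$ on $\mathcal{U}(Q)$ in one stroke and without any use of transversality. You instead stay inside $M$: you test rigidity of $Q\subset pMp$ against the compressions $\phi_t=pE_M(\theta_t(\cdot))p$ (these are subunital and subtracial rather than unital and trace preserving, since $\theta_t(p)\neq p$ in general --- harmless, as that is exactly what the rigidity criterion permits), and then use Theorem \ref{transversality} together with the Pythagoras identity $\|\theta_t(x)-\phi_t(x)\|_2^2=\|x\|_2^2-\|\phi_t(x)\|_2^2$ (valid because $\phi_t(x)$ is the orthogonal projection of $\theta_t(x)$ onto $L^2(pMp)$) to convert uniform control of $\phi_t$ on $(Q)_1$ into uniform control of $\theta_t$ itself. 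What your route buys: it avoids invoking the persistence of relative rigidity under enlarging the ambient algebra, and it automatically handles the fact that $\theta_t$ does not preserve the corner, a point the paper's one-line argument glosses over; the cost is the extra transversality step, which is the same mechanism the paper uses later in the spectral gap argument of Theorem \ref{comint}. Either way the hypothesis of Theorem \ref{intertwiningrigid2} is verified and the stated dichotomy follows.
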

\begin{proof}Since $Q\subset pMp$ is rigid, we know that $Q\subset p\tilde{M}p$
is rigid as well. Thus, we can find $F\subset\tilde{M}$ finite and $\delta > 0$ such that if $\phi : \tilde{M}\rightarrow\tilde{M}$ is a
normal, subunital, c.p. map with $\|\phi(x) - x\|_2\le\delta$, for all $x\in F$, then $\|\phi(u) - u\|_2\le\frac{1}{2}$, for all
$u\in\mathcal{U}(Q)$. In particular, since $t\rightarrow\theta_t$ is a pointwise $\|.\|_2$-continuous action, we can find $n\geq1$ such that
$\|\theta_{1/2^n}(u) - u\|\le\frac{1}{2}$, for all $u\in\mathcal{U}(Q)$. Therefore $\theta_t\ra id$ uniformly on the unit ball of $(Q)_1$ and
the conclusion follows from Theorem \ref{intertwiningrigid2}.\end{proof}

Also, for further use, we point out the following consequence of the above theorem:
\begin{thm}\label{controlrigid2}  Let $H$ be a group with Haagerup's property and $I$ a $\G$-set with finite stabilizers.
Let $H\wr_{I}\Gamma\curvearrowright A$ be a trace preserving action on an abelian algebra $A$ and denote by $M=A\rtimes( H\wr_{I}\Gamma)$. If
$Q\subset M$ is a diffuse property (T) subalgebra then $Q\prec_M A\rtimes\G$.
\end{thm}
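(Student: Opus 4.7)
The plan is to invoke Theorem \ref{intertwiningrigid} and then rule out the second alternative by exploiting the incompatibility between property (T) and Haagerup's property.

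Since $Q\subset M$ is diffuse and rigid, Theorem \ref{intertwiningrigid} gives two possibilities: either $Q\prec_M A\rtimes \G$, which is precisely the desired conclusion, or there exist $\iota\in I$ and a finite set $F\subset I$ such that $Q\prec_M A\rtimes(H\wr_{\G_\iota F}\G_\iota)$. I will show the second possibility cannot occur.

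Since $I$ has finite stabilizers, $\G_\iota$ is finite, and as $F$ is finite so is $\G_\iota F\subset I$. Hence $H^{\G_\iota F}$ is a finite direct product of copies of the Haagerup group $H$ and therefore has Haagerup's property; consequently $H\wr_{\G_\iota F}\G_\iota=H^{\G_\iota F}\rtimes\G_\iota$ has Haagerup's property as a finite extension of a Haagerup group. Combining this with the amenability of the abelian algebra $A$, the finite von Neumann algebra $N:=A\rtimes(H\wr_{\G_\iota F}\G_\iota)$ has Haagerup's property in the sense of Jolissaint: there exists a net of trace-preserving normal unital completely positive maps $\phi_n\colon N\ra N$ that are compact operators on $L^2(N)$ and converge to $\id_N$ pointwise in $\|\cdot\|_2$.

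The intertwining $Q\prec_M N$ yields nonzero projections $p\in Q$, $q\in N$, a partial isometry $v\in M$, and a normal $*$-homomorphism $\psi\colon pQp\ra qNq$ with $vx=\psi(x)v$ for all $x\in pQp$. After cutting by a suitable central projection in $pQp$ (using that diffuseness passes to direct summands) we may assume $\psi$ is injective, so that $Q_0:=\psi(pQp)\subseteq qNq$ is a diffuse subalgebra still having property (T). Restricting the Haagerup approximation to $qNq$ and composing with $E_{Q_0}$ yields compact cp maps $\psi_n\colon Q_0\ra Q_0$ converging pointwise in $\|\cdot\|_2$ to $\id_{Q_0}$; by property (T) of $Q_0$, this convergence is uniform on the unit ball. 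It follows that $\id_{Q_0}$ is a uniform limit of compact operators on $L^2(Q_0)$, which forces $Q_0$ to be finite-dimensional, contradicting diffuseness. The main technical point is the verification of Haagerup's property for the crossed product $N$; the remainder is a routine structural application of Theorem \ref{intertwiningrigid} combined with the classical (T)-versus-Haagerup dichotomy.
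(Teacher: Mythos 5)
Your first step coincides with the paper's: apply Theorem \ref{intertwiningrigid} and rule out the alternative $Q\prec_M A\rtimes(H\wr_{\G_{\iota}F}\G_{\iota})$ by playing property (T) against Haagerup's property. However, the way you rule it out contains a genuine gap. You assert that, since $H\wr_{\G_{\iota}F}\G_{\iota}$ has Haagerup's property and $A$ is abelian (amenable), the crossed product $N=A\rtimes(H\wr_{\G_{\iota}F}\G_{\iota})$ has the Haagerup approximation property in the sense of Jolissaint. There is no such permanence result, and the statement is false in general: $L^{\infty}(\mathbb{T}^{2})\rtimes SL_2(\mathbb{Z})$ (or $L^{\infty}(\mathbb{T}^{2})\rtimes F_{2}$) is a crossed product of an abelian algebra by a Haagerup group, yet it contains the diffuse subalgebra $L^{\infty}(\mathbb{T}^{2})$ with the relative property (T), so it has no Haagerup approximation by compact completely positive maps. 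In the theorem at hand the action of $H\wr_{I}\G$ on $A$ is completely arbitrary, hence so is its restriction to the subgroup $H\wr_{\G_{\iota}F}\G_{\iota}$; nothing in the hypotheses prevents exactly this kind of relative rigidity from occurring inside $N$. So what you call the ``main technical point'' is precisely the step that cannot be justified, and the rest of your argument (which is the standard ``(T) plus compact approximations implies finite-dimensional'' computation) has nothing to act on.

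The paper circumvents this by never claiming a Haagerup property for the crossed product. Instead, it uses only the group-level Haagerup property of $H\wr_{I}\G_{\iota}$: a sequence of positive definite functions $\phi_n\in c_0(H\wr_I\G_\iota)$ tending to $1$ pointwise gives multipliers $m_n(\sum a_g u_g)=\sum\phi_n(g)a_gu_g$ on $A\rtimes(H\wr_I\G_\iota)$. These are completely positive and converge pointwise to the identity but are \emph{not} compact (they act trivially in the $A$-direction), so no Haagerup property of the crossed product is needed. Property (T) of $Q$ upgrades the convergence to uniform convergence on $(Q)_1$, which forces the group-Fourier support of every $x\in(Q)_1$ to concentrate on a fixed finite set $F$; via Theorem \ref{popaintertwining} this yields $Q\prec_M A$, and the (T)-versus-Haagerup (indeed, versus abelian) contradiction is then applied to $A$ itself, not to the crossed product. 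If you replace your Haagerup-for-$N$ claim by this multiplier argument, the proof goes through; as written, it does not.
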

\begin{proof}
Notice that, by Theorem \ref{intertwiningrigid}, we only need to show that $Q\nprec_M A\rtimes( H\wr_I \G_\iota)$. Below we proceed by
contradiction to show this is indeed the case.

So assuming $Q\nprec_M A\rtimes( H\wr_I \G_\iota)$, without loosing any generality, we may actually suppose that $Q\subset A\rtimes ( H\wr_I
\G_\iota)$ is a possibly non-unital subalgebra.

Since $H$ has Haagerup property it follows that $H\wr_I\G_\iota$ also has the Haagerup property. Therefore one can find a sequence,
$\{\phi_n\}\in c_o(H\wr_I\G_\iota)$, of positive definite functions that converge to the constant function 1 pointwise. It is well known that
the corresponding multipliers $m_n=m_{\phi_n}:A\rtimes (H\wr_I\G_\iota)\rightarrow A\rtimes (H\wr_I\G_\iota)$ given by $m_n(\sum a_gu_g)=\sum
\phi_n(g)a_gu_g$ form a sequence of completely positive maps converging pointwise to the identity. Since $Q$ has property (T), they must
converge uniformly on the unit ball of $Q$. Thus there is a finite set $F\subset H\wr_I\G_\iota$ such that if $x=\sum_{g\in
H\wr_I\G_\iota}x_gu_g\in (Q)_1$ then $\|\sum_{g\in F}x_gu_g\|_2>\frac{1}{2}$ for all $x\in (Q)_1$.

This implies that $\sum_{g\in F}\|E_A(xu_g^*)\|_2>\frac{1}{2}$ and by Theorem \ref{popaintertwining} we obtain $Q\prec_M A$, which is a
contradiction because $A$ is abelian while $Q$ has property (T).\end{proof}

\section{Commuting Subalgebras of M.}\label{sec:commutingsubalg}

 In this section we study commuting subalgebras of von Neumann algebras arising from actions by wreath product groups.
 Our main result is a general theorem describing the position of all subalgebras of $M$ having large commutant. The first result in this direction
 was obtained by the second named author in \cite{P-gap}, in the context of von Neumann algebras arising from Bernoulli actions. For similar results
 the reader may consult \cite{Oz04,CI08}.

\vskip 0.1in
\begin{thm}\label{comint} Let $H,\G$ be countable groups with $H$ amenable and let $I$ a $\G$-set with finite stabilizers.
Let $H\wr_{I}\Gamma\curvearrowright A$ be a trace preserving action on an amenable algebra $A$ and denote by $M=A\rtimes( H\wr_{I}\Gamma)$. Let
$p\in M$ be a projection and  $P\subset pMp$ be a subalgebra with no amenable direct summand. If we denote by $Q=P'\cap pMp$ then we have
that $Q\prec_M A\rtimes\Gamma$.

Moreover, if we also assume that $A\rtimes\G$ is a factor and  $Q\nprec_M A\rtimes\G_{\iota}$ for all $\iota$ then there exists a unitary $u\in M$
such that $u^*\mathcal{N}_M(Q)''u\subseteq A\rtimes \G$.

\end{thm}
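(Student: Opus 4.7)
The plan is a spectral-gap analysis of the malleable deformation $\theta_t$ from Section \ref{sec:wreathproductdeformations} combined with the structural dichotomy supplied by Theorem \ref{intertwiningrigid2}; the second claim will then follow from the first via the intertwining machinery of Section \ref{sec:intertwining}.

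I would begin from the observation that $A\rtimes H^I$ is amenable (since $A$ and $H^I$ both are). Thanks to the free-product structure $\tilde H=H*\mathbb{Z}$ underlying $\tilde M$, the $M$-bimodule $\mathcal H:=L^2(\tilde M)\ominus L^2(M)$ is weakly contained in $L^2(M)\otimes_{A\rtimes H^I}L^2(M)$ and is therefore amenable as an $M$-bimodule. Since $P\subset pMp$ has no amenable direct summand, Popa's spectral-gap principle applies to $\mathcal H$. For $x\in(Q)_1$, I would look at $\xi_t(x):=\theta_t(x)-E_M\theta_t(x)\in\mathcal H$: using $xy=yx$ for $y\in P$ and the trace-preserving automorphism property of $\theta_t$, the identity $[y,\theta_t(x)]=(y-\theta_t(y))\theta_t(x)+\theta_t(x)(\theta_t(y)-y)$ gives $\|[y,\xi_t(x)]\|_2\le 2\|\theta_t(y)-y\|_2$, uniformly in $x\in(Q)_1$. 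Letting $t\to 0$ and applying spectral gap would force $\|\xi_t(x)\|_2\to 0$ uniformly on $(Q)_1$; transversality (Theorem \ref{transversality}) then upgrades this to $\|\theta_{2t}(x)-x\|_2\to 0$ uniformly on $(Q)_1$.

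Theorem \ref{intertwiningrigid2} would then provide either (a) $Q\prec_M A\rtimes\Gamma$, as desired, or (b) $Q\prec_M B:=A\rtimes(H\wr_{\Gamma_\iota F}\Gamma_\iota)$ for some $\iota\in I$ and finite $F\subset I$. The hard part of the proof will be to rule out (b). Since $I$ has finite stabilizers, $\Gamma_\iota$ is finite, so $B$ is amenable. One may assume $Q\nprec_M A$ (else $Q\prec_M A\subset A\rtimes\Gamma$ directly). Picking a non-zero partial isometry $v\in M$ and non-zero projections $p'\in Q$, $q\in B$ with $v(p'Qp')v^*\subset qBq$, and using that $p'\in Q\subset P'$ commutes with $P$, the intertwiner carries $p'Pp'$ into $(v(p'Qp')v^*)'\cap qMq\subset\mathcal N_M(v(p'Qp')v^*)''$. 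Lemma \ref{controlnorm}(3), applied with $S=\Gamma_\iota F$, would give $\mathcal N_M(v(p'Qp')v^*)''\prec_M A\rtimes H^I$, which is amenable; hence a corner of $P$ intertwines into an amenable algebra, and passing to the central support of $p'$ in $P$ should produce an amenable direct summand, contradicting the hypothesis.

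For the second assertion, Claim 1 already yields $Q\prec_M A\rtimes\Gamma$. The extra hypotheses that $A\rtimes\Gamma$ is a factor and $Q\nprec_M A\rtimes\Gamma_\iota$ for every $\iota$ would then permit a standard saturation argument: one builds a maximal family of orthogonal partial isometries that intertwine corners of $Q$ into $A\rtimes\Gamma$, and the combination of factoriality and the non-intertwining hypothesis prevents any residual gap, so the family assembles into a unitary $u\in\mathcal U(M)$ with $u^*Qu\subset A\rtimes\Gamma$. The non-intertwining condition transfers to $u^*Qu\nprec_{A\rtimes\Gamma}A\rtimes\Gamma_\iota$ for every $\iota$, so Lemma \ref{controlnorm}(2) applied inside $A\rtimes\Gamma$ yields $\mathcal N_M(u^*Qu)''\subset A\rtimes\Gamma$; conjugating back by $u$ produces the desired $u^*\mathcal N_M(Q)''u\subset A\rtimes\Gamma$.
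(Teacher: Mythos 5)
Your overall strategy matches the paper's: the spectral gap argument (via the decomposition of $L^2(\tilde{M})\ominus L^2(M)$ into bimodules over amenable subalgebras, which is exactly Lemma \ref{bimoduledecomp}) combined with transversality gives uniform convergence of $\theta_t$ on $(Q)_1$, Theorem \ref{intertwiningrigid2} gives the dichotomy, and the ``moreover'' part is handled essentially as the paper does (it cites Theorem 4.4 \emph{ii)} of \cite{P1}), with Lemma \ref{controlnorm}(2) controlling the normalizer after conjugation. One imprecision along the way: your claim that $L^2(\tilde{M})\ominus L^2(M)$ is weakly contained in $L^2(M)\otimes_{A\rtimes H^I}L^2(M)$ is not what the decomposition gives, since the cyclic pieces are the Jones bimodules over $K_s=A\rtimes(H\wr_{I\setminus\Delta_s}\mathrm{Stab}_{\Gamma}(\tilde{\eta}_s))$, which are not subalgebras of $A\rtimes H^I$; what you actually need, and what Lemma \ref{bimoduledecomp} proves using amenability of each $K_s$, is weak containment in the coarse bimodule.

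The genuine gap is in your exclusion of alternative (b). First, to apply Lemma \ref{controlnorm}(3) you need the \emph{image} $\psi(p'Qp')\subset qBq$ of the intertwining homomorphism to satisfy $\psi(p'Qp')\nprec_M A$; knowing $Q\nprec_M A$ is not enough, because the range of an intertwiner can collapse into a smaller algebra. The paper arranges this by choosing the intertwiner as in \cite{V-bim} so that $\phi(qQq)\nprec_M A\rtimes\G_j$ for all $j$ (this is also why the argument may assume $Q\nprec_M A\rtimes\G_j$ for all $j$, the other case being trivial for the first claim). Second, and more seriously, the inference ``$v(p'Pp')v^*\subset\mathcal{N}_M(\psi(p'Qp'))''$ and $\mathcal{N}_M(\psi(p'Qp'))''\prec_M A\rtimes H^I$, hence a corner of $P$ intertwines into an amenable algebra'' is not valid: the relation $\prec_M$ does not pass to subalgebras of the source, since the corner of the normalizing algebra that embeds into $A\rtimes H^I$ may be essentially disjoint from $vPv^*$. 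The paper avoids this by using, instead of the stated conclusion of Lemma \ref{controlnorm}(3), the stronger fact established inside its proof: the relative commutant of the image of $Q$ is \emph{contained} in the finite sum of translates $\sum_{s\in\tilde{K}}\bigl(A\rtimes(H\wr_I\G_\iota)\bigr)u_s$. Since $P$ commutes with $Q$, this gives $wPw^*\subset\sum_{k,l}\bigl(A\rtimes(H\wr_I\G_\iota\G_k)\bigr)u_{g_{k,l}}$, an honest containment in finitely many translates of amenable subalgebras, which by the intertwining criterion does yield $P\prec_M A\rtimes(H\wr_I\G_\iota\G_{k_0})$ and hence an amenable direct summand of $P$, the desired contradiction. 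To repair your argument, replace the appeal to the conclusion of Lemma \ref{controlnorm}(3) by this commutant/finite-sum control, together with the \cite{V-bim} choice of intertwiner.
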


Our proof is again based on deformation/rigidity technology,
resembling the proof of Theorem \ref{intertwiningrigid}. The main
difference however is that, instead of property (T), we will use the
``spectral gap rigidity'' argument from \cite{P-gap} to show that
the deformation $\theta_t$ converges uniformly to the identity on
the unit ball of $Q$.  For the proof of Theorem \ref{comint} we need
the following preliminary result.
\begin{lem}\label{bimoduledecomp}Let $M$ and  $\tilde{M}$ as above and let $\omega$ be a free ultrafilter on $\mathbb N$. If $P\subset M\subset \tilde{M}$ is a subalgebra with no amenable direct summand then $P'\cap \tilde{M}^{\omega}\subset
M^{\omega}$.
\end{lem}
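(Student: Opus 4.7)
The plan is to apply Popa's spectral gap rigidity paradigm from \cite{P-gap}. The crucial analytical step is to show that the Hilbert $M$-$M$ bimodule
\begin{equation*}
\HH \;:=\; L^2(\tilde M) \ominus L^2(M)
\end{equation*}
is weakly contained in the coarse $M$-$M$ bimodule $L^2(M) \oo L^2(M)$; once this is available, the conclusion of the lemma follows rapidly from the hypothesis that $P$ has no amenable direct summand.

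For the weak containment, I would pick a set $S \subset \tilde H^I$ of representatives for the left cosets of $H^I$ in $\tilde H^I$, choosing each nontrivial $\eta \in S$ to be finitely supported and with entries that are reduced words in $\tilde H = H * \mathbb Z$ ending in a letter of $\mathbb Z \setminus \{e\}$. This yields an orthogonal $M$-bimodule decomposition
\begin{equation*}
\HH \;=\; \bigoplus_{\eta \in S\setminus\{e\}} L^2\bigl(\overline{M u_\eta M}\bigr),
\end{equation*}
and a direct normal-form computation identifies the centralizing subalgebra $M \cap u_\eta^* M u_\eta$ with $A \rtimes (H \wr_I \G_{F_\eta})$, where $F_\eta := \mathrm{supp}(\eta)$ and $\G_{F_\eta} = \bigcap_{\iota \in F_\eta} \G_\iota$ is \emph{finite} by the finite-stabilizer hypothesis on $I$. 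Hence each summand is isomorphic, as an $M$-$M$ bimodule, to $L^2(M) \otimes_{A\rtimes(H\wr_I \G_{F_\eta})} L^2(M)$. Since $A$ and $H$ are amenable and $\G_{F_\eta}$ is finite, the algebra $A\rtimes (H \wr_I \G_{F_\eta})$ is amenable, and the standard fact that tensoring over an amenable subalgebra produces a bimodule weakly contained in the coarse correspondence delivers the desired weak containment.

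With this in hand, I would invoke Popa's spectral gap criterion from \cite{P-gap}: if $P \subset M$ has no amenable direct summand, then any bounded sequence $(y_n)_n$ in $\HH$ that is asymptotically central for $P$ in the $\|\cdot\|_{2,\omega}$ sense satisfies $\lim_{n\to\omega} \|y_n\|_2 = 0$. To conclude, given $x = (x_n)_n \in P' \cap \tilde M^{\omega}$, decompose $x_n = E_M(x_n) + y_n$ with $y_n \in \HH$. Since $E_M$ is $M$-bimodular and hence $P$-bimodular, the sequence $(y_n)_n$ inherits the asymptotic $P$-centrality from $x$, so the spectral gap criterion forces $y_n \to 0$ in $\|\cdot\|_{2,\omega}$; consequently $x = (E_M(x_n))_n \in M^{\omega}$. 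The key obstacle in implementing this strategy is the bimodule identification in the second paragraph: one must manipulate reduced-word normal forms in $\tilde H^I$ and keep careful track of how the cosets of $H^I$ interact with $\G$ via the generalized Bernoulli action, which is precisely where the finite-stabilizer hypothesis on $I$ is essential for trapping the centralizer inside an amenable subalgebra.
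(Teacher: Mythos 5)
Your overall strategy coincides with the paper's own proof: decompose $L^2(\tilde M)\ominus L^2(M)$ into cyclic $M$-bimodules, identify each one with a basic-construction bimodule $L^2\langle M,e_K\rangle$ over an \emph{amenable} subalgebra $K\subset M$, deduce weak containment in the coarse bimodule, and finish with the spectral gap argument of Lemma 5.1 in \cite{P-gap}. The final spectral-gap step is fine. The problem lies in the bimodule identification, which you yourself single out as the crux, and which is incorrect as stated, in two related ways.

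First, indexing by representatives of \emph{left} cosets of $H^I$ in $\tilde H^I$ does not yield an orthogonal decomposition: if $e\neq h\in H^I$ is supported in $F_\eta=\mathrm{supp}(\eta)$, then $\eta$ and $h\eta$ represent distinct left cosets (both still have entries ending in a letter of $\Z\setminus\{e\}$), yet $u_{h\eta}=u_hu_\eta$, so they generate the same cyclic bimodule; one must normalize at \emph{both} ends (entries starting and ending with nonzero powers of $u$), i.e.\ work with double cosets, as the paper does. Second, and more seriously, the claimed identification of the centralizing algebra with $A\rtimes(H\wr_I\G_{F_\eta})$ is false: the copies of $H$ sitting over $\mathrm{supp}(\eta)$ do not centralize $u_\eta$. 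Indeed, for $e\neq h\in H^I$ supported in $F_\eta$ one has $u_\eta u_h u_\eta^{*}=u_{\eta h\eta^{-1}}$, whose coordinates are reduced words in $\tilde H=H*\Z$ containing nonzero powers of $u$, hence this element lies outside $M$; equivalently, $\langle u_hu_\eta u_h^{*},u_\eta\rangle=0$ in $L^2(\tilde M)$, whereas for your algebra $B=A\rtimes(H\wr_I\G_{F_\eta})$ one has $u_he_Bu_h^{*}=e_B$, so $\overline{Mu_\eta M}\not\cong L^2\langle M,e_B\rangle$ as $M$-bimodules. The correct algebra is the one appearing in the paper, $K_s=A\rtimes(H\wr_{I\setminus\Delta_s}\mathrm{Stab}_{\G}(\tilde\eta_s))$, where $\Delta_s$ is the support of the (both-ends normalized) representative $\tilde\eta_s$: only the $H$-coordinates \emph{off} the support survive, together with the stabilizer of $\tilde\eta_s$ in $\G$, which is finite by the finite-stabilizer hypothesis. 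Since this smaller algebra is still amenable ($A$ and $H$ amenable, the stabilizer finite), the weak containment in the coarse bimodule and the spectral-gap conclusion go through exactly as you outline; so the error is repairable, but the computation on which your key step rests does not hold as written.
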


\begin{proof} The first step is to decompose the $M$-bimodule $L^2(\tilde{M})\ominus L^2(M)$ as a direct sum of cyclic
$M$-bimodules. It is a straightforward exercise for the reader to see that the above $M$-bimodule can be written as a direct sum of
$M$-bimodules $\overline{M\tilde{\eta}_sM}^{\|\cdot\|_2}$, where the cyclic vectors $\tilde{\eta}_s$ correspond to an enumeration of all
elements of $\tilde{H}^I$ whose non-trivial coordinates start and end with non-zero powers of $u$.

Next, for every $s$, we denote by $\eta_s$ the element of $H^I$ that
remains from $\tilde{\eta}_s$ after deleting all nontrivial powers
of $u$. Also for every $s$ let $\Delta_s$ be the support of
$\tilde{\eta}_s$ in $I$ and observe that if
$Stab_{\Gamma}(\tilde{\eta}_s)$ denotes the stabilizing group of
$\tilde{\eta}_s$ inside $\G$ then we have
$Stab_{\Gamma}(\tilde{\eta}_s)(I\setminus \Delta_s)\subset
(I\setminus \Delta_s)$. Hence we can consider the von Neumann
algebra $K_s=A\rtimes_{\sigma }(H\wr_{I\setminus
\Delta_s}Stab_{\Gamma}(\tilde{\eta}_s)) $ and using similar
computations as in Lemma 5 of \cite{CI08}, one can easily check that
the map $x\tilde{\eta}_sy\ra x\eta_se_{K_s}y$ implements an
$M$-bimodule isomorphism between $\overline {M\tilde{\eta}_s
M}^{\|\cdot\|_2}$ and
$L^2(\langle M,e_{K_s }\rangle)$. 

Therefore, as $M$-bimodules, we have the following isomorphism
\begin{equation}\label{400}L^2(\tilde{M})\ominus L^2(M)\cong \bigoplus_sL^2(\langle M,e_{K_s}\rangle).\end{equation}
Notice that, since $I$ is a $\Gamma$-set with amenable, in fact
finite, stabilizers if follows that $Stab_{\G}(\tilde{\eta}_s)$ are
amenable for all $s$. Also, since $H$ is amenable group and $A$ is
an amenable algebra we conclude that the algebra $K_s$ is amenable
for all $s$ and therefore the bimodule in (\ref{400}) is weakly
contained in a multiple of the coarse bimodule
$L^2(M)\overline{\otimes} L^2(M)$. Finally, the conclusion of our
lemma follows proceeding exactly as in Lemma 5.1 from \cite{P-gap}.
\end{proof}

We can now proceed with the proof of Theorem \ref{comint}.

\begin{proof}
First we use the spectral gap argument to show that the deformation
$\theta$ converges to the identity uniformly on $(Q)_1$ .Indeed,
exactly as in \cite{P-gap}, since $P$ has no amenable direct
summand, Lemma \ref{bimoduledecomp} implies that $P'\cap
\tilde{M}^{\omega}\subset M^{\omega}$. Hence, for any $\epsilon>0$
there exist $\delta_{\epsilon}>0$ and  $\mathcal F\in
\mathcal{U}(P)$ a finite set, such that whenever $x\in \tilde{M}$
satisfies $\|[x,u]\|_2\le \delta_{\epsilon}$ for all $u\in\mathcal
F$ we have that $\|x-E_{{M}}(x)\|_2\le \epsilon$.

If we let $t_\epsilon>0$ such that $\|\theta_t(u)-u\|\le \frac{\delta_{\epsilon}}{2}$ for all $u\in \mathcal F$ then the triangle inequality
implies that for every $0\le t\le t_{\epsilon}$ and every $x\in (Q)_1$ we have
\begin{equation*}\|[\theta_t(x),u]\|_2\le 2\|\theta_t(u)-u\|\le \epsilon.\end{equation*} Therefore by the above we obtain that $\|\theta_t(x)-E_M(\theta_t(x))\|_2\le \epsilon$
 and using the transversality of $\theta_t$ (Theorem \ref{transversality}) we conclude that $\|\theta_{2t}(x)-x\|_2\le2\epsilon$ for all $x\in (Q)_1$ and $0\le t\le t_{\epsilon}$.

In conclusion deformation $\theta_t$ converges uniformly on $(Q)_1$ and hence, by applying Theorem \ref{intertwiningrigid2}, we have the
following two alternatives: either $Q\prec_M A\rtimes \G $ or there exist $\iota\in I$ and a finite set $F$ such that $Q\prec_M A\rtimes
(H\wr_{{\G}_\iota F} {\G}_\iota)$.

Next we show that the second case, together with the assumption
$Q\nprec_M A\rtimes_{\sigma} \G_j$ for all $j\in I $ will lead to a
contradiction. By these assumptions, using \cite{V-bim}, one can
find nonzero projections $q\in Q$, $p\in A\rtimes(H\wr_{\G_{\iota}F}
\G_{\iota})$, a $*$-homomorphism $\phi :qQq\ra
p(A\rtimes(H\wr_{\G_{\iota}F} \G_{\iota})) p$ and a partial isometry
$w\in M$ such that $\phi(x)w=wx$ for all $x\in qQq$ and
$\phi(qQq)\nprec_M A\rtimes \G_j$ for all $j\in I$. \newline Since
$\phi (qQq)$ is a diffuse subalgebra of $
p(A\rtimes(H\wr_{\G_{\iota}F} \G_{\iota})) p$ then part (3) of Lemma
\ref{controlnorm} implies that
\begin{equation}\label{100}\phi (qQq)'\cap pMp\subset \sum_{s\in\tilde{K}} [A\rtimes(H\wr_{I} \G_{\iota})] u_s.\end{equation} On
the other hand $P\subset Q'\cap M $ and hence by (\ref{100}) we have $wPw^*\subset \sum_{s\in\tilde{K}}[A\rtimes(H\wr_{I} \G_{\iota})]u_s $.
  Since $\tilde{K}=\bigcup_{k,l\in F} g_{k,l}\G_{\iota} \G_k$ for some finite set of elements $g_{k,l}\in \G$ then by above we have
  that $wPw^*\subset \sum_{k,l}[A\rtimes(H\wr_{I} \G_{\iota}\G_k)]u_{g_{k,l}} $. Using intertwining by bimodule techniques this implies
 that $P\prec_M A\rtimes(H\wr_{I} \G_{\iota}\G_{k_o})$ for some $k_o\in F$ but this is impossible because $A\rtimes(H\wr_{I} \G_{\iota}Stab_{\G}(k_o))$ is amenable
while $P$ has no amenable direct summand.

Therefore the only possibility is $Q\prec_M A\rtimes \G $ and the
remaining part of the conclusion follows proceeding in the same way
as in Theorem 4.4 \emph{ii)} of \cite{P1}.
\end{proof}

An algebra $N$ is called \emph{solid} if for every $A\subset N$
diffuse subalgebra $A'\cap N$ is amenable. As a consequence of
previous theorem we obtain the following stability property similar
with Corollary 8 in \cite{CI08}.

\begin{cor} Let $(A,\tau)$ be an amenable von Neumann
algebra and $H$ be an amenable group. Assume that $(H\wr\Gamma)\curvearrowright A$ is a trace preserving action such that $M=A\rtimes
(H\wr\Gamma)$ and $A\rtimes\Gamma$ are factors and for every diffuse $Q\subset A$ the relative commutant $Q'\cap M$ is amenable. Then $A\rtimes
(H\wr\Gamma)$ is a solid if and only if $A\rtimes\Gamma$ is solid.
\end{cor}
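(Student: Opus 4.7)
The forward direction is straightforward: if $M$ is solid and $Q_0\subseteq A\rtimes\G$ is diffuse, then $Q_0\subseteq M$ is diffuse, so $Q_0'\cap M$ is amenable, and hence its subalgebra $Q_0'\cap(A\rtimes\G)$ is amenable. For the converse, I assume $A\rtimes\G$ is solid and, for contradiction, pick a diffuse $Q\subseteq M$ with $Q'\cap M$ non-amenable. Let $z\in Z(Q'\cap M)$ be the central projection complementary to the maximal amenable direct summand of $Q'\cap M$, and set $P:=(Q'\cap M)z$, so that $P\subseteq zMz$ has no amenable direct summand. Applying Theorem \ref{comint} to $P$ yields $R:=P'\cap zMz \prec_{M} A\rtimes\G$. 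Since $z$ is central in $Q'\cap M$ it commutes with $Q$, so $Qz$ makes sense and commutes with $P$, giving $Qz\subseteq R$.

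I split into two cases. First, if $R\nprec_{M} A$, then, since $A\rtimes\G$ is a factor and the stabilizers $\G_\iota$ are trivial in the usual wreath product $H\wr\G$, the moreover part of Theorem \ref{comint} produces a unitary $u\in M$ with $u^*\mathcal{N}_{M}(R)''u\subseteq A\rtimes\G$. Both $P$ (as the commutant of $R$ in $zMz$) and $Qz$ (as a subalgebra of $R$) sit inside $\mathcal{N}_{M}(R)''$ via the usual $z$-extension trick, so the images $u^*Pu$ and $u^*Qzu$ are commuting subalgebras of $A\rtimes\G$. Since $Qz$ is diffuse, solidity of $A\rtimes\G$ makes $(u^*Qzu)'\cap(A\rtimes\G)$ amenable, whence $u^*Pu$ is amenable and so is $P$, contradicting the choice of $z$.

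Second, if $R\prec_{M} A$, then $Qz\subseteq R$ combined with the uniform-lower-bound characterization in Theorem \ref{popaintertwining} transfers the intertwining to $Qz\prec_{M} A$. This yields nonzero projections $q_0\in Qz$, $p_0\in A$, a $*$-homomorphism $\phi\colon q_0Qzq_0\to p_0Ap_0$ and a nonzero partial isometry $v\in M$ with $\phi(x)v=vx$; after shrinking $q_0$ we may assume $B:=\phi(q_0Qzq_0)\subseteq A$ is diffuse, so by the standing hypothesis on $A$ the algebra $B'\cap M$ is amenable. Since $q_0\in R$ commutes with $P$, the corner $Pq_0$ lies in $(q_0Qzq_0)'\cap q_0Mq_0$, and a direct calculation using $\phi(x)v=vx$ together with $v^*\phi(x)=xv^*$ gives $v(Pq_0)v^*\subseteq B'\cap M$. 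Exploiting that $q_0$ commutes with $P$, the map $p\mapsto pq_0$ realizes $Pq_0$ as the quotient $Pc$ of $P$ by $P(1-c)$ for the central projection $c\in Z(P)$ supporting $q_0$; transferring amenability through $v$ then forces $Pc$ to be an amenable direct summand of $P$, once more contradicting the choice of $z$.

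The principal technical point is in the last step of Case 2: the conjugation $y\mapsto vyv^*$ is not multiplicative on all of $(q_0Qzq_0)'\cap q_0Mq_0$, becoming a genuine $*$-isomorphism only after restriction to the $v^*v$-corner, so the descent from amenability of $v(Pq_0)v^*$ to amenability of $Pq_0$ must be performed carefully, combining the fact that $v^*v$ commutes with $q_0Qzq_0$ with a central-support argument that replaces $Pq_0$ by the bona fide direct summand $Pc$ of $P$.
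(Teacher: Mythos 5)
Your overall strategy is the paper's: reduce to the converse, cut $Q'\cap M$ by $z$ so that $P=(Q'\cap M)z$ has no amenable direct summand, apply Theorem \ref{comint} to get $R=P'\cap zMz\prec_M A\rtimes\G$, use the standing hypothesis on diffuse subalgebras of $A$ to rule out the degenerate case, and then invoke the ``moreover'' part of Theorem \ref{comint} to conjugate the commuting pair into $A\rtimes\G$ and contradict solidity. Your Case 1 is exactly the paper's argument (the paper phrases the needed non-embedding as ``$Bz\nprec_M A$, by the hypothesis assumption''), and your transfer $R\prec_M A\Rightarrow Qz\prec_M A$ (a subalgebra with the same unit inherits intertwining, by the unitary criterion of Theorem \ref{popaintertwining}) is correct.

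The gap is in the last step of Case 2, and it is precisely the point you flag but do not resolve. From $\phi(x)v=vx$ you only know $v^*v\in (q_0Qzq_0)'\cap q_0Mq_0$, not $v^*v\in (Pq_0)'$, so $y\mapsto vyv^*$ is not multiplicative on $Pq_0$ and $v(Pq_0)v^*$ need not generate, nor be contained in, anything isomorphic to a corner of $Pq_0$; your proposed repair (``$v^*v$ commutes with $q_0Qzq_0$'' plus the identification $Pq_0\cong Pc$, $c\in\mathcal Z(P)$) does not touch this multiplicativity problem, since the central-support step only says $Pq_0$ is a quotient of $P$ by a central projection and gives no amenability. Note also that one cannot simply upgrade the containment $v(Pq_0)v^*\subseteq B'\cap M$ to $P\prec_M B'\cap M$: testing the criterion with $x=v$, $y=v^*$ only yields $\|E_{B'\cap M}(vuv^*)\|_2^2=\tau(u^*(v^*v)u(v^*v))$, which may tend to $0$ along unitaries of $P$. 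The correct mechanism is to work with the whole relative commutant $\mathcal C:=(q_0Qzq_0)'\cap q_0Mq_0$, which contains both $Pq_0$ and $v^*v$: on the corner $(v^*v)\mathcal C(v^*v)$ the map $\mathrm{Ad}(v)$ \emph{is} a $*$-isomorphism onto a subalgebra of the amenable algebra $B'\cap M$, so $\mathcal C$ has a nonzero amenable direct summand $\mathcal C z_0$ with $z_0\in\mathcal Z(\mathcal C)$; since $z_0$ commutes with $Pq_0\subseteq\mathcal C$, the algebra $Pq_0z_0$ is amenable, and it is isomorphic to a nonzero direct summand of $P$ (central support argument), contradicting the choice of $z$. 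In particular your claim that all of $Pc$ is amenable is more than the argument delivers, though a nonzero amenable summand suffices. To be fair, the paper compresses this very step into the single assertion ``$Bz\nprec_M A$''; but since your write-up makes the step explicit and names it the principal technical point, it needs the argument above (or a citation of the standard commuting-subalgebras lemma) to be complete.
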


\begin{proof}  Notice that the proof follows once we show that $A\rtimes\Gamma$ is solid implies $A\rtimes (H\wr\Gamma)$ is a solid.
Hence assume that $A\rtimes\Gamma$ is solid and let $B\subset M=A\rtimes (H\wr\Gamma)$ be a diffuse von Neumann subalgebra. If we assume by
contradiction that the commutant $P = B'\cap M$ is non-amenable, then we can find a non-zero projection $z\in\mathcal{Z}(P)$ such that $Pz$ has
no amenable direct summand. Since $[Bz,Pz] = 0$ then $Bz\prec_MA\rtimes\G$ and by the hypothesis assumption we have that $Bz\nprec_MA$.
Therefore, since $A\rtimes\G$ is a factor then by the second part of Theorem \ref{comint} one can find a unitary $u\in M$ such that $u(Bz\vee
Pz)u^*\subset A\rtimes \G$. This however contradicts the solidity of $A\rtimes \G$ and we are done.

\end{proof}
\begin{rem} It is immediate from Theorem \ref{comint} that if $H$ is an amenable group then for any
non-amenable group $\G$ and any free, ergodic, measure
preserving action $H\wr\Gamma\curvearrowright(X,\mu)$ the II$_1$ factor $L^{\infty}(X,\mu)\rtimes (H\wr\Gamma)$ is prime, i.e. it cannot be
decomposed as a tensor product of two diffuse factors.
\end{rem}

\bigskip

\section{OE-rigidity results}\label{sec:measure equivalence}

Sako showed in \cite{Sa09} that a measure equivalence between two
wreath products groups $H\wr\Gamma$ and $K\wr\La$,
 where $H,K$ are amenable and $\Gamma,\La$ are products of non-amenable exact groups, implies the measure equivalence of the malnormal subgroups $\Gamma$ and $\La$.
 In fact he was able to prove this measure equivalence rigidity for certain classes of direct products and amalgamated free products,
 thus obtaining rigidity results \'{a} la  Monod-Shalom \cite{MoSh}, as well as of Bass-Serre type \cite{IPP,AG08,C-H}.
 His methods rely on Ozawa's techniques \cite{Oz,Oz04} involving the class $\mathcal{S}$ of groups, being
 $C^*$-algebraic in nature and depending crucially on exactness of the groups involved.

 In this section we apply the results from the previous section to show that this type of measure equivalence rigidity for wreath
 products holds true for much larger classes of groups (Corollary \ref{merigid2} below).
 The techniques we use in the proof
 are purely von Neumann algebra, using Popa's deformation/rigidity theory.

\vskip 0.05in

 {{\bf {The Classes WR}}(k).} Recall from the introduction that for each $k=1, 2, 3$, we
 denote by ${\bf WR}(k)$ the class of all generalized wreath product groups $H\wr_I\Gamma$
 with $\G$ i.c.c., $I$ a $\G$-set with finite stabilizers and satisfying the corresponding condition from below:

\begin{enumerate}\item \label{ht} $\Gamma$ has property (T) and $H$ has Haagerup's property;
 \item \label{tt}$\Gamma$ and $H$ have property (T) and $H$ is i.c.c.;
  \item \label{ap}$\Gamma$ is a non-amenable product of infinite groups and $H$ is amenable.

\end{enumerate}

\begin{thm}\label{merigid1} Let $H\wr_{I}\Gamma, K\wr_J \La\in {\bf WR}(k)$ and suppose that
$(H\wr_{I}\Gamma)\curvearrowright A$ and $(K\wr_{J}\La)\curvearrowright B$ are free, trace preserving actions on diffuse, abelian algebras.
Denote by $M=A\rtimes(H\wr_{I}\Gamma)$, $N=B\rtimes(K\wr_{J}\La)$, let  $t>0$ and  assume that $\phi: M\rightarrow N^t$ is a $*$-isomorphism
such that $\phi(A)=B^t$.

Then one can find a unitary $u\in \mathcal{N}_{N^t}(B^t)$ such that $u^*\phi(A\rtimes \Gamma)u=(B\rtimes \La)^t$.

\end{thm}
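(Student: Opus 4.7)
The strategy has four steps. First, use the rigidity of $\Gamma$ (encoded in class ${\bf WR}(k)$) together with the deformation--rigidity machinery from Sections 3 and 4 to locate $\phi(L\Gamma)$ inside $N^t$. Second, exploit the common Cartan $B^t=\phi(A)$ and Lemma \ref{augumentintertwining} to promote this to an intertwining of $\phi(A\rtimes\Gamma)$ into $(B\rtimes\La)^t$. Third, apply the same reasoning to $\phi^{-1}$ to produce the reverse intertwining. Fourth, invoke a standard Popa-type Cartan rigidity principle to extract a unitary $u\in\mathcal{N}_{N^t}(B^t)$ with $u^*\phi(A\rtimes\Gamma)u=(B\rtimes\La)^t$. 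The first and essential claim is that
\begin{equation*}
\phi(L\Gamma)\prec_{N^t}(B\rtimes\La)^t.
\end{equation*}

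The argument for this intertwining splits by class. When $k=1$, $\phi(L\Gamma)$ is a diffuse property (T) subalgebra of $N^t$ and $K$ has Haagerup's property, so Theorem \ref{controlrigid2} applied inside $N^t$ gives the conclusion at once. When $k=2$, $\phi(L\Gamma)$ is still a diffuse property (T) subalgebra, so Theorem \ref{intertwiningrigid} produces two alternatives; the second, an embedding of a corner into $(B\rtimes(K\wr_{\La_jF}\La_j))^t$ for some $j\in J$ and finite $F\subset J$, is ruled out by observing that $\La_j$ is finite, so the relative commutant of such a small wreath-type subalgebra in $N^t$ contains the diffuse factor $L(K^{J\setminus\La_jF})^t$, which is incompatible with the i.c.c.\ structure of $L\Gamma$. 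When $k=3$, write $\Gamma=\Gamma_1\times\Gamma_2$ with both $\Gamma_i$ infinite non-amenable, and apply Theorem \ref{comint} to $P=\phi(L\Gamma_1)\subseteq N^t$ (which has no amenable direct summand); the relative commutant $Q=P'\cap N^t$ contains the diffuse algebra $\phi(L\Gamma_2)$, so the theorem gives $\phi(L\Gamma_2)\prec_{N^t}(B\rtimes\La)^t$. Interchanging the roles of $\Gamma_1$ and $\Gamma_2$ and combining produces the required intertwining for $\phi(L\Gamma)$.

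Since $A$ is Cartan in $M$ and $\phi(A)=B^t$, we have $\mathcal{N}_{N^t}(B^t)''=N^t$, while $\phi(\Gamma)$ normalizes $B^t$ and $B^t\subseteq(B\rtimes\La)^t$. Applying Lemma \ref{augumentintertwining} with $Q=B^t$, ambient $N=(B\rtimes\La)^t$ and $\mathcal{G}=\phi(\Gamma)$ promotes the previous intertwining to
\begin{equation*}
\phi(A\rtimes\Gamma)=(\mathcal{U}(B^t)\phi(\Gamma))''\prec_{N^t}(B\rtimes\La)^t.
\end{equation*}
Running the same argument for $\phi^{-1}:N^t\to M$, which also preserves the Cartan ($\phi^{-1}(B^t)=A$) and for which the hypotheses on the target $M$ are symmetric, yields the reverse intertwining $(B\rtimes\La)^t\prec_{N^t}\phi(A\rtimes\Gamma)$.

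At this point the two subalgebras $P_1=\phi(A\rtimes\Gamma)$ and $P_2=(B\rtimes\La)^t$ of $N^t$ both contain $B^t$ as a regular Cartan subalgebra (i.e.\ $\mathcal{N}_{P_i}(B^t)''=P_i$), and they mutually intertwine inside $N^t$. A standard Cartan-to-Cartan conjugation argument, in the spirit of Theorem A.1 of \cite{Po01a} and its refinements in \cite{P1}, then produces a unitary $u\in\mathcal{N}_{N^t}(B^t)$ with $u^*P_1u=P_2$, which is the desired conclusion. The main obstacle is the exclusion of the second alternative of Theorem \ref{intertwiningrigid} in case $k=2$; this requires a commutant/i.c.c.\ analysis tailored to the wreath-product structure and the finite-stabilizer hypothesis. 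The remaining steps are routine packagings of Lemma \ref{augumentintertwining} and of Popa's Cartan rigidity techniques.
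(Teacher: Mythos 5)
Your overall skeleton (locate $\phi(L\Gamma)$, augment by the common Cartan via Lemma \ref{augumentintertwining}, symmetrize with $\phi^{-1}$, finish by Cartan conjugation) does match the paper, and your treatment of case $k=1$ via Theorem \ref{controlrigid2} is exactly what is done. But the step you yourself flag as the main obstacle --- excluding the alternative $\phi(L\Gamma)\prec_{N^t}(B\rtimes(K\wr_{\La_jF}\La_j))^t$ in case $k=2$ --- is not correct as you argue it. The target algebra contains the Cartan $B^t$, and since the action is free, $B^t$ is maximal abelian in $N^t$; hence the relative commutant of $(B\rtimes(K\wr_{\La_jF}\La_j))^t$ sits inside the abelian algebra $B^t$ and certainly does not contain the diffuse factor $L(K^{J\setminus\La_jF})^t$. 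Moreover, even if the target did have a large relative commutant, that would not obstruct a corner-embedding of the II$_1$ factor $\phi(L\Gamma)$ (a factor can perfectly well intertwine into a subalgebra with diffuse relative commutant), so ``incompatible with the i.c.c.\ structure of $L\Gamma$'' is not a valid inference. Tellingly, your exclusion never uses that $K$ is i.c.c.\ with property (T) nor that $\phi(A)=B^t$; the paper's exclusion needs both: it applies Theorem \ref{intertwiningrigid} to $\phi^{-1}(LK^T)$ inside $M$, and rules out each of the resulting alternatives by a chain of arguments involving Lemma \ref{augumentintertwining} (using $\phi^{-1}(B)=A$), the ultrapower Lemma \ref{intertwining}, the quasi-normalizer control of Lemma \ref{controlnorm}, and results of \cite{IPP}. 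A related (smaller) gap occurs in your case $k=3$: Theorem \ref{comint} gives $(\phi(L\Gamma_1))'\cap N^t\prec(B\rtimes\La)^t$, and intertwining does not pass to subalgebras, so you cannot simply conclude $\phi(L\Gamma_2)\prec(B\rtimes\La)^t$, nor combine two such statements into one for $\phi(L\Gamma)$ without an extra argument (also, only one factor of $\Gamma$ need be non-amenable).

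The final step is also a genuine gap: mutual intertwining of two intermediate subalgebras $B^t\subset P_i\subset N^t$, each regular over the common Cartan, does not by itself yield a unitary in $\mathcal N_{N^t}(B^t)$ conjugating one onto the other; there is no off-the-shelf ``Cartan-to-Cartan'' theorem of that strength. What the paper actually does is: use the factoriality of $A\rtimes\Gamma$ and $(B\rtimes\La)^t$ together with Lemma \ref{controlnorm} (the wreath-specific control of commutants/quasi-normalizers, which forces $ww^*=\phi(p)$ and $w^*w\in(B\rtimes\La)^t$) to upgrade the intertwining $\phi(A\rtimes\Gamma)\prec(B\rtimes\La)^t$ to an honest inclusion $v\phi(A\rtimes\Gamma)v^*\subseteq(B\rtimes\La)^t$; then show $vB^tv^*\prec B^t$ inside $(B\rtimes\La)^t$ and invoke Theorem A.1 of \cite{PBetti} to replace $v$ by a unitary normalizing $B^t$; and finally run the symmetric argument for $\phi^{-1}$ and use Lemma \ref{controlnorm} once more (to see that the comparing unitary lies in $A\rtimes\Gamma$) to turn the two inclusions into the equality $u^*\phi(A\rtimes\Gamma)u=(B\rtimes\La)^t$. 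These upgrades are where the mixing/malnormality of $\Gamma$ in $M$ enters, and they cannot be dismissed as routine packaging.
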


\begin{proof}

Denote by $P=A\rtimes \Gamma$, $Q=B\rtimes \La$ and observe that $A\subset P \subset M$ and $B\subset Q \subset N$. To simplify the
technicalities we will assume without loosing any generality that $t=1$. Since $\Gamma$ either has property (T) or is a non-amenable product of
infinite groups and $\phi$ is an isomorphism it follows that either $\phi(L\Gamma)$ is a property (T) subalgebra of $M$ or $\phi(L\Gamma)$ is a
non-amenable tensor product of two diffuse factors.

Below, we argue that for all cases (1)-(3) covered in the definition of the classes ${\bf WR}(k)$ we have
\begin{eqnarray}\label{200}\phi(L\G)\prec_{N}Q.\end{eqnarray}

For case (\ref{ht}) this follows directly from Corollary \ref{controlrigid2} while for case (\ref{ap}) it follows from Theorem \ref{comint}.
Therefore it only remains to treat case (\ref{tt}), i.e. when all groups $H, K,\G, \La$ have property (T).

Applying Theorem \ref{intertwiningrigid} we have that either $\phi (L\G)\prec_{N}Q$ or there exists a finite subset $T\subset J$ such that
$\phi(L\G)\prec_{N}B\rtimes K^{T}$ and therefore to finish the proof of (\ref{200}) it suffices to show that the second possibility leads to a
contradiction.

Notice that since  $\phi^{-1}(LK^{T})$ is a property (T) subalgebra of $M$ then Theorem \ref{intertwiningrigid} again implies that either
$\phi^{-1} (L K^{T})\prec_{M}P$ or there exists a finite subset $S\subset I$ such that $\phi^{-1} (L K^{T})\prec_{M}A\rtimes H^{S}$. Next we
show that both situations are leading to a contradiction.

Assuming the first situation, since $LK^T$ and $P$ are a factors,
then proceeding as in the proof of Theorem 5.1 in \cite{IPP} one can
find a nonzero projection $p_1\in LK^{J\setminus T}$ and a unitary
$u_1\in M$ such that  $u_1^*(\phi^{-1}((LK^T) p_1))u_1\subset P$.
Using Lemma \ref{controlnorm}, this implies that
$u_1^*(\phi^{-1}(p_1(LK^{J})p_1))u_1\subset P $. Moreover, since $P$
is a factor, we have that $u_1^*(\phi^{-1}(L(K^{J}))u_1\subset P $
and therefore Lemma \ref{controlnorm} implies that
$u_1^*(\phi^{-1}(L(K\wr_{J}\La))u_1\subset P $. However, since
$\phi^{-1}(B)=A$ then by Lemma \ref{augumentintertwining} again we
have that $M=\phi^{-1}(N)\prec_{M}P$, which is obviously a
contradiction.

Assuming the second situation, since $\phi^{-1}(B)=A$, then Lemma \ref{augumentintertwining} gives that $\phi^{-1}(B\rtimes
K^{T})\prec_{M}A\rtimes H^{S}$. From the initial assumptions $B\rtimes K^{T}$ is a factor and therefore Lemma \ref{intertwining} implies that
$\phi^{-1}(B\rtimes K^{T})^{\omega}\subset (A\rtimes H^{S})^\omega\vee M$ or equivalently
\begin{equation}\label{201}(B\rtimes K^{T})^{\omega}\subset (\phi(A\rtimes H^{S}))^\omega\vee N.\end{equation} Also, since
$\phi(L\G)\prec_{N}B\rtimes K^{T}$, the same argument as above shows that \begin{equation*}(\phi(L\G))^\omega\subset (B\rtimes
K^{T})^{\omega}\vee N,\end{equation*} and  combining this with (\ref{201}) we obtain that $(\phi(L\G))^\omega\subset(\phi(A\rtimes H^{I
}))^\omega\vee N$. Therefore the second part of Lemma \ref{intertwining} implies $L\G\prec_{M}A\rtimes H^{I}$ but one can easily see this is
again impossible. \vskip 0.05in

Hence we proved (\ref{200}) and, moreover, since $\phi(A)=B$ then Lemma \ref{augumentintertwining} implies that
\begin{equation}\label{203}\phi(P)\prec_{N}Q.\end{equation}

 Next we show that the intertwining above can be extended to unitary conjugacy preserving the Cartan subalgebra $B$.

 By (\ref{203}) one can find nonzero projections $p\in P$, $q\in Q$, a nonzero partial isometry $w\in M$ and a unital isomorphism $\psi:
\phi(pPp)\rightarrow qQq$ such that
\begin{equation}\label{7}w\psi(x)=xw \text{ for all } x\in \phi(pPp).\end{equation}
The previous relation automatically implies that $ww^*\in\phi(pPp)'\cap \phi(p)N\phi(p)$ and $w^*w\in\psi(\phi(pPp))'\cap qMq$. Since $P$ is a
factor then Lemma \ref{controlnorm} gives that $\phi(pPp)'\cap\phi(p)N\phi(p)=\mathbb C \phi(p)$ and therefore $ww^*=\phi(p)$.

Similarly, since $\psi(\phi(pPp))$ is a II$_1$ factor and $B\rtimes Stab_{\La}(j)$ is a type I algebra for all $j\in J$ then
$\psi(\phi(pPp))\nprec_{Q}B\rtimes Stab_{\La}(j)$ and by Lemma \ref{controlnorm} we have that $\psi(\phi(pPp))'\cap qNq\subset Q$. When this is
combined with the above we obtain $w^*w\in Q$ and hence relation (\ref{7}) implies that
\begin{equation}\label{8}
w^*\phi(P)w=w^*w\psi(\phi(pPp))\subseteq Q.
\end{equation}
Letting $v_0\in N$ to be a unitary such that $w=ww^*v_0$, the previous relation rewrites as  $v_0^*\phi(pPp)v_0\subseteq N_2$ and since $Q$ is
a factor one can find a unitary $v\in N$ such that
\begin{equation}\label{9} v\phi(P)v^*\subseteq Q.\end{equation}

Next we claim that $vBv^*\prec_{Q} B$. To see this, suppose by contradiction that $vBv^*\nprec_{Q} B$. Since $Stab_{\La}(j)$ is finite for all
$j\in J$ this is equivalent to $vBv^*\nprec_{Q} B\rtimes Stab_{\La}(j)$. Therefore Lemma \ref{controlnorm} implies that
$\mathcal{N}_{N}(vBv^*)''\subseteq Q$ and because $vBv^*$ is a Cartan subalgebra of $N$ one gets that $N\subset Q$. However this is impossible
and hence we proved our claim.

Furthermore, since $vBv^*$ and $B$ are Cartan subalgebras of $Q$
satisfying $vBv^*\prec_{Q} B$, Theorem A.1. in \cite{PBetti} shows
that there exists a unitary $v_1\in Q$ such that $v_1vBv^*v_1^*=B$.
Therefore $u=v_1v\in \mathcal{N}_{N}(B)$ and combining this with
(\ref{9}) we obtain that
\begin{equation}\label{10}u\phi(P)u^*\subseteq Q.\end{equation}
In the remaining part of the proof we show that the two algebras above coincide. Indeed, applying the same reasoning as before for the
isomorphism $\phi^{-1}$, one can find a unitary $u_o\in \mathcal{N}_{M}(A)$ such that
\begin{equation*}u_o\phi^{-1}(Q)u_o^*\subseteq P,\end{equation*}
and combining this with (\ref{10}) we obtain
\begin{equation}\label{12}
u_o\phi^{-1}(u)P\phi^{-1}(u^*)u_o^*\subseteq u_o\phi^{-1}(Q)u_o^*\subseteq P.
\end{equation}
However, Lemma \ref{controlnorm} implies that $u_o\phi^{-1}(u)\in P$ and therefore relation (\ref{12}) became
$u_o\phi^{-1}(u)P\phi^{-1}(u^*)u_o^*=u_o\phi^{-1}(Q)u_o^*=P$, which in particular entails that $u\phi(P)u^*= Q$.\end{proof}

\begin{thm}\label{merigid4} Let $H\wr_{I}\Gamma$,  $K\wr_J \La$ be generalized wreath product groups
such that $H$, $K$ are i.c.c.\ groups with  property $(T)$ and $I$, $J$ have finite stabilizers. Suppose that $(H\wr_{I}\Gamma)\curvearrowright
A$ and $(K\wr_{J}\La)\curvearrowright^\rho B$ are free, trace preserving actions on diffuse, abelian algebras and denote by
$M=A\rtimes(H\wr_{I}\Gamma)$, $N=B\rtimes(K\wr_{J}\La)$.

If $t>0$ and $\phi: M\rightarrow N^t$ is a $*$-isomorphism such that $\phi(A)=B^t$ then one can find a unitary $x\in \mathcal{N}_{N^t}(B^t)$
such that $x\phi(A\rtimes H^I)x^*=(B\rtimes K^J)^t$.

\end{thm}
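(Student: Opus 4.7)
The idea is to mimic the proof of Theorem \ref{merigid1} for the case $\mathbf{WR}(2)$, but with the roles of $\G$ and $H^I$ interchanged: we exploit the property (T) rigidity of the individual copies $LH_\iota\subset LH^I$ (each of which is an icc property (T) factor, since $H$ is icc with property (T)) to locate $\phi(A\rtimes H^I)$ inside $B\rtimes K^J$. Since the hypothesis places both $H\wr_I\Gamma$ and $K\wr_J\La$ in $\mathbf{WR}(2)$, Theorem \ref{merigid1} already applies and we may, after replacing $\phi$ by $\mathrm{Ad}(u)\circ\phi$ for a suitable $u\in\mathcal{N}_{N^t}(B^t)$, assume that $\phi(A)=B^t$ and $\phi(A\rtimes\G)=(B\rtimes\La)^t$. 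Rescaling, we reduce to $t=1$.

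Fix $\iota\in I$. Since $LH_\iota\subset M$ is a rigid II$_1$ factor, Theorem \ref{intertwiningrigid} applied to $\phi(LH_\iota)\subset N$ yields one of the two alternatives: either $\phi(LH_\iota)\prec_N B\rtimes\La$, or $\phi(LH_\iota)\prec_N B\rtimes(K\wr_{\La_jF}\La_j)$ for some $j\in J$ and finite $F\subset J$. Because of the normalization, the first alternative is equivalent to $LH_\iota\prec_M A\rtimes\G$, and I rule it out via Theorem \ref{popaintertwining}: for any $x=au_g$, $y=bu_k\in M$ with $g=\xi g_0$, $k=\zeta k_0$ (the semidirect decomposition), the element $g h k$ belongs to $\G$ iff $\rho_{g_0}(h)=\xi^{-1}\rho_{g_0}(\zeta)^{-1}$, a single fixed element of $H^I$; since $H$ is icc, a sequence $(h_n)$ of distinct elements of $H_\iota$ can satisfy this for at most one value of $n$, so $\|E_{A\rtimes\G}(xu_{h_n}y)\|_2\to 0$ for all $x,y$. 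Hence $\phi(LH_\iota)\prec_N B\rtimes(K\wr_{\La_jF}\La_j)$, and since $\La_j$ is finite we can absorb it to obtain $\phi(LH_\iota)\prec_N B\rtimes K^{\La_jF}\subset B\rtimes K^J$. Applying Lemma \ref{augumentintertwining} with the Cartan $Q=B\subset N$ and the subgroup $\mathcal{G}_\iota=\phi(\{u_h:h\in H_\iota\})\subset\mathcal{N}_N(B)$, we upgrade this to $\phi(A\rtimes H_\iota)=(\mathcal{U}(B)\mathcal{G}_\iota)''\prec_N B\rtimes K^J$.

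The main technical step is to pass from these per-$\iota$ intertwinings to the global statement $\phi(A\rtimes H^I)\prec_N B\rtimes K^J$. One way is to apply Theorem \ref{intertwiningrigid} to $\phi(LH^F)$ for every finite $F\subset I$: since $LH^F$ is a finite product of icc property (T) groups, it is a rigid factor, and because $LH_\iota\nprec_M A\rtimes\G$ for any $\iota\in F$, the intertwining is hereditary downward so $LH^F\nprec_M A\rtimes\G$, forcing $\phi(LH^F)\prec_N B\rtimes K^{\La_{j(F)}F'(F)}\subset B\rtimes K^J$ by the same argument as above. To glue these finite-support intertwinings into an intertwining of the whole $\phi(LH^I)$, I invoke Lemma \ref{intertwining}: it yields $\phi(LH^F)^\omega\subset (B\rtimes K^J)^\omega\vee N$ for each finite $F$, after verifying the factoriality and trivial-normalizer-commutant hypotheses (the latter uses that $H$ is icc and the $H\wr_I\G$-action on $A$ is free ergodic, giving control over $\mathcal{N}_M(LH^F)'\cap M$). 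Taking the union over all finite $F$ fills up $\phi(LH^I)^\omega$, and the converse direction of Lemma \ref{intertwining} then gives $\phi(LH^I)\prec_N B\rtimes K^J$. A final application of Lemma \ref{augumentintertwining} (with $\mathcal{G}=\phi(\{u_h:h\in H^I\})$) promotes this to $\phi(A\rtimes H^I)\prec_N B\rtimes K^J$.

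To convert this intertwining into the desired unitary conjugation, I argue exactly as in the proof of Theorem \ref{merigid1}. A partial isometry $w\in N$ and a $*$-homomorphism $\psi:\phi(A\rtimes H^I)p\to q(B\rtimes K^J)q$ satisfying $w\psi(x)=xw$ exist; using Lemma \ref{controlnorm} to control the relative commutants $\phi(A\rtimes H^I)'\cap N$ and $\psi(\phi(A\rtimes H^I))'\cap q(B\rtimes K^J)q$ (here the fact that $A\rtimes H^I$ has $A$ as Cartan and may not be a factor requires a small additional argument using the $\G$-ergodicity inherited from the full wreath product action), I produce a unitary $v\in N$ with $v\phi(A\rtimes H^I)v^*\subset B\rtimes K^J$. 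The subalgebra $vBv^*$ is a Cartan of $B\rtimes K^J$, and $vBv^*\prec_{B\rtimes K^J}B$ because $J$ has finite stabilizers; Theorem~A.1 of \cite{PBetti} then furnishes $v_1\in\mathcal{U}(B\rtimes K^J)$ with $v_1vBv^*v_1^*=B$, whence $x:=v_1v\in\mathcal{N}_N(B)$ and $x\phi(A\rtimes H^I)x^*\subseteq B\rtimes K^J$. Finally, symmetry of the hypothesis in the two wreath products lets us apply the same argument to $\phi^{-1}$, giving the reverse containment and hence the equality $x\phi(A\rtimes H^I)x^*=(B\rtimes K^J)^t$.

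The main obstacle is the gluing step in the third paragraph: combining the intertwinings $\phi(LH^F)\prec_N B\rtimes K^J$ obtained on finite subsets $F\subset I$ into the single global intertwining $\phi(LH^I)\prec_N B\rtimes K^J$. This requires the ultrapower formalism of Lemma \ref{intertwining} and careful verification of its normalizer-commutant hypothesis for the algebras $\phi(LH^F)$; here is where the full strength of $H$ being icc (so that $LH^F$ is a factor) and $I$ having finite stabilizers (so that the normalizer of $H^F$ in $H\wr_I\G$ is rich enough) is used. The subsequent Cartan-uniqueness step is routine once the intertwining is in hand.
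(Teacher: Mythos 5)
Your endgame contains the decisive gap. After producing $x\in\mathcal N_N(B)$ with $x\phi(A\rtimes H^I)x^*\subseteq B\rtimes K^J$ you assert that applying the same argument to $\phi^{-1}$ "gives the reverse containment and hence the equality", citing the scheme of Theorem \ref{merigid1}. But the equality step in Theorem \ref{merigid1} rests on the malnormality-type control of Lemma \ref{controlnorm} for $A\rtimes\G$: there, composing the two one-sided containments yields a unitary $w$ with $w(A\rtimes\G)w^*\subseteq A\rtimes\G$, and the mixing of the generalized Bernoulli action of $\G$ on $H^I$ forces $w\in A\rtimes\G$, whence equality. The analogous statement is false for the core: $A\rtimes H^I$ is \emph{regular} in $M$, and every $u_g$, $g\in\G\setminus\{e\}$, conjugates it onto itself without belonging to it. So from $w(A\rtimes H^I)w^*\subseteq A\rtimes H^I$ (with $w=y\phi^{-1}(x)$) you cannot conclude $w\in A\rtimes H^I$, and the two containments do not combine into the claimed equality. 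This is exactly the point at which the paper's proof changes tools: it records that the intertwinings in both directions produce bimodules of finite dimension over $B\rtimes K^J$, notes that $\phi(A\rtimes H^I)$ and $B\rtimes K^J$ are irreducible \emph{regular} subfactors of $N$ (this is where iccness of $\G$ and $\La$ enters), and invokes Theorem 8.4 of \cite{IPP} to obtain a unitary $u$ with $u\phi(A\rtimes H^I)u^*=B\rtimes K^J$ on the nose; only afterwards is $u$ corrected, via a Fourier-coefficient argument and Theorem A.1 of \cite{PBetti}, into a unitary normalizing $B$. Some conjugacy theorem of this kind for regular subfactors is indispensable here, and your proposal never supplies one.

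Two further problems. First, your opening reduction invokes Theorem \ref{merigid1}, which requires $H\wr_I\G,\,K\wr_J\La\in{\bf WR}(2)$, i.e.\ property (T) and iccness of $\G$ and $\La$; the statement you are proving assumes property (T) only for $H$ and $K$, so the normalization $\phi(A\rtimes\G)=(B\rtimes\La)^t$ is not available, and with it your way of ruling out the alternative $\phi(LH_\iota)\prec_N B\rtimes\La$ collapses (the paper excludes that alternative directly, by the arguments from the first part of the proof of Theorem \ref{merigid1}, without first conjugating $\phi(A\rtimes\G)$ onto $B\rtimes\La$). Second, your gluing step over finite $F\subset I$ via Lemma \ref{intertwining} needs $\mathcal N_M(LH^F)'\cap M=\C 1$; you justify this by "free ergodic", but ergodicity is not among the hypotheses, and even for ergodic actions of the full wreath product the restriction to $H^I$ need not be ergodic, so elements of the fixed-point algebra of $A$ under $H^I$ can produce a nontrivial relative commutant of the normalizer. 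This verification is therefore a genuine gap, and the whole ultrapower gluing is in any case avoidable: the paper passes from $\phi(LH)\prec_N B\rtimes K^T$ directly to $\phi(LH^I)\prec_N B\rtimes K^J$ and then to $\phi(A\rtimes H^I)\prec_N B\rtimes K^J$ by the normalizer control of Lemma \ref{controlnorm}, without ultrapowers.
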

\begin{proof} To simplify the technicalities we assume that $t=1$.
Since $H$ has property (T) then $\phi(LH)$ is a rigid subalgebra of $N$ and therefore by Theorem \ref{intertwiningrigid} we have that either
$\phi(LH)\prec_NB\rtimes \La$ or there exits a finite subset $T\subset J$ such that $\phi(LH)\prec_NB\rtimes K^T$. Using the same arguments as
in the first part of the proof one can easily show that the first possibility will lead to a contradiction. Therefore we have that
$\phi(LH)\prec_NB\rtimes K^T$ and by applying Lemma \ref{controlnorm} we get that $\phi(LH^I)\prec_NB\rtimes K^J$. Applying Lemma
\ref{controlnorm} this further implies that $\phi(A\rtimes H^I)\prec_NB\rtimes K^J$ and therefore there exists a $A\rtimes H^I$-$B\rtimes K^J $
bimodule $\mathcal H$ with finite dimension over $B\rtimes K^J$.

A similar argument for $\phi^{-1}$ shows that $B\rtimes
K^J\prec_N\phi (A\rtimes H^I)$ and hence one can find a nonzero
$B\rtimes K^J $-$A\rtimes H^I$ bimodule $\mathcal K$ with finite
dimension over $B\rtimes K^J$. Since $\G,\La$ are i.c.c.\ and
$B\rtimes K^J$ and $\phi(A\rtimes \G)$ are irreducible, regular
subfactors of $N$ then, by Theorem 8.4 in \cite{IPP}, there exists a
unitary $u\in N$ such that $u \phi (A\rtimes H^I) u^*=B\rtimes K^J$.
Denoting by $\psi_u=Ad(u)$ this further implies that
$\psi_u\circ\phi$ is an isomorphism from $A\rtimes H^I$ onto
$B\rtimes K^J$  which satisfies
\begin{equation*}\psi_u\circ \phi(a)u=u\phi(a),\end{equation*}
for all $a\in A$.
Next we consider the Fourier decomposition $u=\sum_{\lambda\in \La} y_\lambda v_\lambda$ with $y_\lambda \in B\rtimes K^J$ and using the above equation there exists a nonzero element $y_\lambda\in B\rtimes K^J$ such that for all $a\in A$ we have
\begin{equation}\label{700}\psi_u\circ \phi(a)y_\lambda=y_\lambda \rho_\lambda (\phi(a)).\end{equation}

Note that since $B=\phi(A)$ is a maximal abelian subalgebra of $N$ then (\ref{700}) implies that $y^*_\lambda y_\lambda \in B$. Furthermore taking the polar decomposition $y_\lambda =w_\lambda|y_\lambda|$ with $w_\lambda$ partial isometry in (\ref{700}) we conclude that

\begin{equation*}\psi_u\circ \phi(a)w_\lambda=w_\lambda \rho_\lambda (\phi(a)),\end{equation*}
for all $a\in A$.

This shows in particular $\psi_u (B)\prec_{B\rtimes K^J} B$ and
since $B$ and $\psi_u(B)$ are Cartan subalgebras of $B\rtimes K^J$
then by Theorem A.1 \cite{PBetti} there exists a unitary $u_o\in
B\rtimes K^J$ such that $u_o\psi_u(B)u^*_o=B$. Finally the
conclusion follows by letting $x=u_ou\in \mathcal N_N(B)$.
\end{proof}

We now
have the following immediate corollary of Theorem \ref{merigid1}:
\begin{cor}\label{merigid2}
Given $1\le k\le 3$ let $H\wr_{I}\G,K\wr_J\La \in{\bf WR}(k)$. If one assumes that $H\wr_{I}\Gamma\cong_{ME}K\wr_{J}\La$ then we have
$\Gamma\cong_{ME}\La$.
\end{cor}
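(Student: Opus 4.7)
The plan is to reduce Corollary \ref{merigid2} to Theorem \ref{merigid1} via the standard dictionary between stable orbit equivalence and Cartan-preserving isomorphisms of group measure space factors (cf.\ \cite{Si55}).

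First, by the definition of measure equivalence recalled in the introduction, $H\wr_{I}\Gamma\cong_{ME}K\wr_{J}\La$ produces free, ergodic, probability measure preserving actions $(H\wr_{I}\Gamma)\curvearrowright (X,\mu)$ and $(K\wr_{J}\La)\curvearrowright (Y,\nu)$ which are stably orbit equivalent. Setting $A=L^{\infty}(X,\mu)$ and $B=L^{\infty}(Y,\nu)$, the Feldman--Moore/Singer construction then yields $t>0$ and a $*$-isomorphism
\begin{equation*}
\phi: A\rtimes (H\wr_{I}\Gamma)\longrightarrow (B\rtimes (K\wr_{J}\La))^{t}
\end{equation*}
with $\phi(A)=B^{t}$. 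Since the enveloping groups act freely on $(X,\mu)$ and $(Y,\nu)$, their subgroups $\Gamma$ and $\La$ also act freely; thus $A$ is a Cartan subalgebra of $A\rtimes\Gamma$ and $B^{t}$ is Cartan in $(B\rtimes\La)^{t}$.

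Now I would apply Theorem \ref{merigid1} directly to this $\phi$: it provides a unitary $u\in \mathcal{N}_{N^{t}}(B^{t})$ such that
\begin{equation*}
u^{*}\phi(A\rtimes\Gamma)u=(B\rtimes\La)^{t}.
\end{equation*}
Composing $\phi$ with $\mathrm{Ad}(u^{*})$, I obtain a $*$-isomorphism $\Phi: A\rtimes\Gamma\to (B\rtimes\La)^{t}$ with $\Phi(A)=B^{t}$, since conjugation by a normalizer of $B^{t}$ preserves $B^{t}$. Reading this the other direction through Feldman--Moore, $\Phi$ corresponds to a stable isomorphism between the orbit equivalence relations $\mathcal{R}(\Gamma\curvearrowright X)$ and $\mathcal{R}(\La\curvearrowright Y)$, which, combined with the freeness noted above, is exactly a stable orbit equivalence between the restricted actions and hence yields $\Gamma\cong_{ME}\La$.

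The only mildly delicate point, which I would address at the end, is that in the paper's definition of ME the witnessing actions are required to be ergodic, whereas the restrictions of the original actions to $\Gamma$ and $\La$ need not be ergodic. This is a standard issue: one decomposes the two restricted equivalence relations into their ergodic components and observes that $\Phi$ induces a measure-preserving correspondence between the component spaces which, after restricting to components of matching measure (which exist by countable saturation since $\Gamma,\La$ are countable), produces ergodic free SOE witnesses. No deformation/rigidity input is needed for this last bookkeeping step, and the core of the proof is the invocation of Theorem \ref{merigid1}.
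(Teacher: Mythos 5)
Your proposal is correct and is exactly the paper's intended argument: the corollary is stated as an immediate consequence of Theorem \ref{merigid1}, obtained by converting the measure equivalence into a Cartan-preserving $*$-isomorphism $\phi:A\rtimes(H\wr_I\G)\to \bigl(B\rtimes(K\wr_J\La)\bigr)^t$ with $\phi(A)=B^t$ via Singer/Feldman--Moore, applying Theorem \ref{merigid1} to conjugate $\phi(A\rtimes\G)$ onto $(B\rtimes\La)^t$ by a unitary normalizing $B^t$, and translating back into a stable orbit equivalence of the restricted actions. The only caveat concerns your final bookkeeping step: the ergodic decomposition of the restricted actions may consist entirely of measure-zero components, so rather than matching positive-measure components one should simply note that a stable orbit equivalence of (possibly non-ergodic) free p.m.p.\ actions already furnishes a measure equivalence coupling between $\G$ and $\La$ in Gromov's sense, a standard fact the paper leaves implicit.
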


 A natural question one may ask is to try classifying \emph{all} groups $\G$ and $H$ for which the above measure equivalence rigidity
phenomena holds. This however remains widely open as for the moment it is unclear what general condition one may be formulate at the level of
groups $\G$ and $H$ to insure this type of rigidity. For instance even when assuming $\G$ has property (T) it is not obvious what are all
groups $H$ for which this rigidity holds.

Another interesting problem is to find situations when measure equivalence rigidity can be upgraded also at the level of the ``core'' groups
$H^I$ and $K^J$. A desirable result in this direction would be that a measure equivalence between $H\wr\Gamma$ and $K\wr\La$ induces a measure
equivalence not only between the malnormal groups $\G$ and $\La$ but also between the normal groups $H^\G$ and $K^\La$. Notice that combining
Theorems \ref{merigid2} and \ref{merigid4} above we obtain one instance of this phenomenon.

\begin{cor}\label{merigid3} If
$H\wr_{I}\G,K\wr_J\La \in{\bf WR}(2)$ such that $H\wr_{I}\Gamma\cong_{ME}K\wr_{J}\La$ then we have $\Gamma\cong_{ME}\La$ and
$H^{I}\cong_{ME}K^{J}$.
\end{cor}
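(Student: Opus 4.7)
The plan is to derive both measure equivalences from an appropriate W$^*$-equivalence of Cartan inclusions, obtained from the measure equivalence hypothesis, and then apply Theorems \ref{merigid1} and \ref{merigid4} in turn to the ``$\Gamma$--part'' and the ``$H^I$--part'' of the crossed product.

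First I would pass from measure equivalence to a Cartan-preserving isomorphism of group measure space factors. Since $H\wr_{I}\G \cong_{ME} K\wr_{J}\La$, standard results (Furman \cite{Fu99}) produce free ergodic probability measure preserving actions $(H\wr_{I}\G)\car(X,\mu)$ and $(K\wr_{J}\La)\car(Y,\nu)$ that are stably orbit equivalent with some index $t>0$; Singer's theorem \cite{Si55} then yields an isomorphism
\[
\phi: M := A\rtimes(H\wr_{I}\G) \longrightarrow N^{t} := \bigl(B\rtimes(K\wr_{J}\La)\bigr)^{t}, \qquad \phi(A)=B^{t},
\]
where $A=L^\infty(X)$, $B=L^\infty(Y)$. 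Note the restricted actions $H^{I}\car X$, $\G\car X$ (and their $K^{J}$, $\La$ analogues) are automatically free, which is all that Theorems \ref{merigid1} and \ref{merigid4} require.

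Next, I would apply Theorem \ref{merigid1} (in case $k=2$) to obtain a unitary $u\in\mathcal{N}_{N^{t}}(B^{t})$ with
$u^{*}\phi(A\rtimes\G)u = (B\rtimes\La)^{t}.$
Setting $\phi_{1}=\mathrm{Ad}(u^{*})\circ\phi$, the map $\phi_{1}$ still satisfies $\phi_{1}(A)=u^{*}B^{t}u=B^{t}$ because $u$ normalizes $B^{t}$. Hence $\phi_{1}$ restricts to an isomorphism $A\rtimes\G \simeq (B\rtimes\La)^{t}$ carrying $A$ onto $B^{t}$, i.e.\ a Cartan-preserving isomorphism of the inclusions. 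By Singer again this means the equivalence relations of $\G\car X$ and $\La\car Y$ are stably isomorphic, which in particular gives an ME coupling between $\G$ and $\La$, so $\G\cong_{ME}\La$.

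Then I would run the same argument using Theorem \ref{merigid4}: it supplies $x\in\mathcal{N}_{N^{t}}(B^{t})$ with $x\phi(A\rtimes H^{I})x^{*} = (B\rtimes K^{J})^{t}$, and because $x$ normalizes $B^{t}$ the map $\phi_{2}=\mathrm{Ad}(x)\circ\phi$ still sends $A$ to $B^{t}$. Restricted to $A\rtimes H^{I}$ this gives a Cartan-preserving isomorphism onto $(B\rtimes K^{J})^{t}$, hence the actions $H^{I}\car X$ and $K^{J}\car Y$ are stably orbit equivalent, yielding $H^{I}\cong_{ME} K^{J}$.

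The only conceptual step beyond invoking the two previous theorems is to verify that the unitaries $u$ and $x$ produced there, because they normalize the Cartan $B^{t}$, transport the Cartan-preserving property of $\phi$ to the restricted isomorphisms; this is where the hypothesis $\phi(A)=B^{t}$ in both Theorems \ref{merigid1} and \ref{merigid4} becomes crucial, and it is the main (though mild) point one must track in order to convert the two conjugacy statements into genuine measure equivalences of the subgroups $\G,\La$ and of the ``core'' subgroups $H^{I},K^{J}$.
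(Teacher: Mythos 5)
Your proposal is correct and is essentially the paper's own argument: the paper obtains Corollary \ref{merigid3} precisely by translating the measure equivalence into a Cartan-preserving isomorphism $\phi: M \to N^t$ with $\phi(A)=B^t$ and then invoking Theorem \ref{merigid1} (via Corollary \ref{merigid2}) for the $\G$--part and Theorem \ref{merigid4} for the $H^I$--part. Your additional remark that the normalizing unitaries preserve the Cartan, so the restricted isomorphisms are again Cartan-preserving and hence give stable orbit equivalences, is exactly the (implicit) bookkeeping the paper relies on.
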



\section{W$^*$-rigidity results}\label{sec:further results}

Some of the technical results obtained in the previous sections can be pushed to slightly more general situations. For instance rather
than studying commuting subalgebras algebras of von Neumann algebras arising from actions by wreath product groups one can study \emph{weakly
compact embeddings.}

 This notion was introduced by Ozawa and Popa and it was triggered by their discovery that in a free group factor $M$ the normalizing
 group $\mathcal N_M(P)$ of any amenable algebra $P$ acts on $P$ by
conjugation in a ``compact'' way \cite{OP1}. This was a key
ingredient which allowed the authors to prove that in a free group
factor the normalizing algebra of any amenable subalgebra is still
amenable. For reader's conveninece, we recall the following
definition from \cite{OP1}:

\begin{defn}\label{wcdef} Let $\Lambda
\stackrel{\sigma}{\curvearrowright}P$ where $P$ is a finite von Neumann algebra. The action $\sigma$ is called \emph{weakly compact} if there
exist a net $(\eta_\alpha)$ of unit vectors in $L^2(P\bar{\otimes}\bar{P})_+$ such that:
\begin{eqnarray}
&&\|\eta_\alpha - (v\otimes\bar{v})\eta_\alpha\|_2\rightarrow 0 \qquad \text{for all } v\in \mathcal{U}(P);\\
&&\|\eta_\alpha - \sigma_g\otimes\bar{\sigma}_g(\eta_\alpha)\|_2\rightarrow 0 \qquad \text {for all } g\in\Gamma;\\
&&\langle(x\otimes1)\eta_\alpha, \eta_\alpha\rangle = \tau(x) = \langle\eta_\alpha, (1\otimes\bar{x})\eta_\alpha\rangle \qquad \text{for all }
\alpha \text{ and  }x\in P.
\end{eqnarray}
\end{defn}

If $P\subset M$ is a subalgebra such that the action
by conjugation of the normalizing group $\mathcal{N}_M(P)$ on $ P$
is weakly compact then we say that the inclusion $P\subset M$ is a
\emph{weakly compact embedding.} It is straightforward from the
definitions that every compact action action $\Lambda
\stackrel{\sigma}{\curvearrowright}P$ is automatically weakly
compact and hence every profinite action \cite{Io08} is also weakly
compact.

In the main result of this section we describe all weakly compact
embeddings in cross-products algebras of type $M= A \rtimes
(H\wr\G)$ with $A$ amenable algebra and $H$ amenable group. Roughly
speaking, we obtain a dichotomy result asserting that every weakly
compact embedding in $M$, either has ``small'' normalizing algebra
or ``lives'' inside $A\rtimes\G$. This should be seen as analogous
to Theorem 4.9 in \cite{OP1}. In fact our proof follows the same
recipe as the proof of Theorem 4.9 in \cite{OP1}. The main
difference at the technical level is that instead of working with
the malleable deformation for actions of free groups we will work
with the deformation described in the first section. Therefore the
compactness argument used in the proof of Theorem 4.9 in \cite{OP1}
will be replaced by the transversality property from Theorem
\ref{transversality}. Most of the arguments used in \cite{OP1} apply
verbatim in our situation and we include some details only for
reader's convenience.

\begin{thm}\label{wcint} Let $(A,\tau)$ be an amenable von Neumann algebras
and $H$ be an amenable group. Assume that $H\wr\Gamma\curvearrowright A$ is an trace preserving action and denote by $M = A \rtimes (H\wr\G)$.
If we assume that $P\subset M$ is a (diffuse) weakly compact embedding such that $\mathcal N_M(P)'\cap M =\mathbb C1$ then one of the
following must hold true:
\begin{enumerate}
\item There exists a nonzero projection $p\in P$ such that $\mathcal{N}_{pMp}(pPp)''$ is amenable.
\item $P\prec_{M}A\rtimes \Gamma$.
\end{enumerate}
If we assume in addition that $P\subset M$ is a Cartan subalgebra then we have that $P\prec_M A$. \end{thm}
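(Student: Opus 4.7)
The strategy is to run the proof of Theorem~4.9 in \cite{OP1}, substituting the Bernoulli-type malleable deformation $\theta_t$ on $\tilde M=A\rtimes(\tilde H\wr\G)$ built in Section~\ref{sec:wreathproductdeformations} for the free malleable deformation used there, and using the transversality property (Theorem~\ref{transversality}) in place of the pointwise compactness argument available in the free group factor setting. The central dichotomy will be whether $\theta_t$ converges uniformly to the identity on the unit ball $(P)_1$.

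Let $(\eta_\alpha)\subset L^2(P\bar\otimes\bar P)_+$ be the net provided by the weakly compact embedding $P\subset M$. First I would feed these vectors through the deformation: for fixed small $t>0$, exploit the three invariance properties of Definition~\ref{wcdef} together with transversality $\|\theta_{2t}(x)-x\|_2\le 2\|\theta_t(x)-E_M\theta_t(x)\|_2$. This produces two alternatives: either (a) $\theta_t\to \mathrm{id}$ uniformly on $(P)_1$ as $t\to 0$, or (b) after suitable averaging the $\eta_\alpha$ yield a nonzero $\mathcal N_M(P)''$-central vector living in the complement $L^2(\tilde M)\ominus L^2(M)$.

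In case (b), Lemma~\ref{bimoduledecomp} decomposes this complement as a direct sum of $M$-bimodules $L^2(\langle M,e_{K_s}\rangle)$ with each $K_s$ amenable (because $H$, $A$ are amenable and the stabilizers are trivial), so it is weakly contained in a multiple of the coarse $M$-bimodule $L^2(M)\bar\otimes L^2(M)$. The existence of a nonzero $\mathcal N_M(P)''$-central vector in a coarse-type bimodule is equivalent to amenability of a corner of $\mathcal N_M(P)''$; cutting by the appropriate support projection in $P$ produces a nonzero $p\in P$ with $\mathcal N_{pMp}(pPp)''$ amenable, which is alternative~(1). In case (a), Theorem~\ref{intertwiningrigid2} applies and gives either $P\prec_M A\rtimes\G$ --- which is alternative~(2) --- or $P\prec_M A\rtimes(H\wr_{\G_\iota F}\G_\iota)$ for some $\iota\in I$ and finite $F\subset I$. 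For the standard wreath product $H\wr\G=H\wr_\G\G$ the stabilizers $\G_\iota$ are trivial, so this latter possibility reduces to $P\prec_M A\rtimes H^F$. If $P\nprec_M A$, Lemma~\ref{controlnorm}(3) combined with the hypothesis $\mathcal N_M(P)'\cap M=\mathbb C 1$ (invoked through Lemma~\ref{augumentintertwining}) gives $\mathcal N_M(P)''\prec_M A\rtimes H^\G$, which is amenable, and a standard corner argument yields alternative~(1); if $P\prec_M A$, amenability of $A$ places a corner of $P$ in an amenable algebra and again (1) holds.

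The main obstacle will be the clean execution of the dichotomy (a)/(b): one has to organize the three invariances of Definition~\ref{wcdef} so that transversality transfers the approximate fixing of $\theta_t$ on the weak compactness net back to uniform approximation on all of $(P)_1$, while in the non-convergent case producing a genuinely $\mathcal N_M(P)''$-central (not merely $\mathcal U(P)$-central) vector in $L^2(\tilde M)\ominus L^2(M)$. This is the one step where the proof genuinely departs from the intertwining arguments of Section~\ref{sec:rigidsubalg} and must be run in the spirit of \cite{OP1}. For the final Cartan assertion, if $P\subset M$ is Cartan and $M$ is non-amenable then $\mathcal N_{pMp}(pPp)''=pMp$ is non-amenable for every nonzero $p\in P$, ruling out~(1); thus $P\prec_M A\rtimes\G$. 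Since $P$ and $A$ are both Cartan subalgebras of the relevant corner of $A\rtimes\G$, Theorem~A.1 of \cite{PBetti} allows a unitary conjugation of $P$ onto $A$ inside $A\rtimes\G$, yielding $P\prec_M A$; the amenable case is trivial.
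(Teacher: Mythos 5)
Your main dichotomy is exactly the paper's: decide whether $\theta_t\to\mathrm{id}$ uniformly on $(P)_1$, and in the negative case combine the weak compactness net of Definition \ref{wcdef} with transversality (Theorem \ref{transversality}), push it through $\theta\otimes 1$, cut by $e_M^{\perp}\otimes 1$, and use Lemma \ref{bimoduledecomp} to land in a bimodule weakly contained in the coarse one; this is how the paper argues. One correction of framing: what this produces (and what the paper uses, via Corollary 2.3 of \cite{OP1}) is a net of vectors with the trace bounds that are \emph{almost} central under finitely many normalizing unitaries, not a genuinely $\mathcal N_M(P)''$-central vector -- a central vector in a coarse-type bimodule is neither obtainable nor needed, and insisting on one would derail the argument. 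In the uniform-convergence case the paper, like you, invokes Theorem \ref{intertwiningrigid2}; for the residual alternative $P\prec_M A\rtimes H^F$ it does not pass through Lemma \ref{controlnorm}(3), but argues directly: after cutting so that $pPp\subset A\rtimes H^F$ and using $P\nprec_M A$, any $y=\sum_\gamma y_\gamma u_\gamma$ normalizing $pPp$ must have Fourier support over $\G$ in a finite set, whence $\mathcal N_{pMp}(pPp)''\prec_M A\rtimes H^{\G}$, which is amenable. Your route via Lemma \ref{controlnorm}(3) and Lemma \ref{augumentintertwining} is workable in spirit, but note that \ref{controlnorm}(3) requires an honest inclusion $Q\subset A\rtimes H^F$, so you must first transport $P$ \emph{and its normalizer} through the intertwining, which is precisely the step the paper's direct Fourier computation handles. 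Also a slip: in the subcase $P\prec_M A$ you conclude alternative (1), but $P\prec_M A$ trivially gives $P\prec_M A\rtimes\G$, i.e.\ alternative (2); alternative (1) concerns amenability of the normalizing algebra, not of $P$, and amenability of a corner of $P$ says nothing (a Cartan $P$ is abelian to begin with).

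The genuine gap is your proof of the final Cartan assertion. Theorem A.1 of \cite{PBetti} conjugates two Cartan subalgebras of the \emph{same} algebra when one intertwines into the other; here $A$ is only assumed amenable -- it is not assumed abelian, the action is not assumed free, so $A$ is not known to be Cartan in $A\rtimes\G$ or in $M$ -- and even in the intended application ($A=L^\infty(X)$ with a free action) the image of a corner of $P$ under the intertwining $P\prec_M A\rtimes\G$ is a masa of a corner of $M$, not known to be Cartan inside the corresponding corner of $A\rtimes\G$, so A.1 does not apply as you use it. Likewise ``the amenable case is trivial'' is not: with $A$ small (say $A=\C 1$ and $H,\G$ amenable) the conclusion $P\prec_M A$ fails outright, which shows that the Cartan claim tacitly needs $A$ itself to be a Cartan subalgebra and a further argument upgrading $P\prec_M A\rtimes\G$ to $P\prec_M A$ (for instance by exploiting regularity of $P$ together with normalizer control as in Lemma \ref{controlnorm}, which is how such upgrades are done elsewhere in the paper). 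To be fair, the paper's written proof also stops after establishing the two alternatives and does not spell out the Cartan statement, but as submitted your argument for it would not stand.
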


\begin{proof}
Let $\mathcal{G}\subset\mathcal{N}_M(P)$ be a subgroup that acts weakly compactly on $P$ and assume that $\mathcal{U}(P)\subset\mathcal{G}$.
First we will show that, when we view $P\subset\tilde{M}$, if $\theta_t$ does not converge uniformly on $(P)_1$ then $\mathcal{G}''$ is
amenable.

So let us assume that $\theta_t$ does not converge uniformly on $(P)_1$. Therefore by transversality of $\theta_t$, Theorem
\ref{transversality}, one can find a constant $0<c<1$, and infinite sequences $t_k\in\mathbb{R}$, $u_k\in\mathcal{U}(P)$ such that
$t_k\rightarrow 0$ and

\begin{equation*}\|\theta_{t_k}(u_k)-E_M(\theta_{t_k}(u_k))\|_2\ge c.\end{equation*}

\noindent Since $\|\theta_{t_k}(u_k)\|_2=1$ then Pythagorean theorem further implies that

\begin{equation}\label{502}\|E_M(\theta_{t_k}(u_k))\|_2\le\sqrt{1-c^2}.\end{equation}

Now we fix  $\epsilon>0$ and $F\subset\mathcal{G}$ a finite set. Then we choose $\delta>0$ satisfying $1-2\delta>\sqrt{1-c^2}$ and $k$
sufficiently large such that for all $u\in F$ we have

\begin{equation*}\|u-\theta_{t_k}(u)\|\le \frac{\epsilon}{6}.\end{equation*}

For the rest of the proof we denote by $\theta=\theta_{t_k}$ and $v=u_k$ and let $(\eta_\alpha)$ be as in the definition of weak compactness.
Then we consider the following nets
\begin{eqnarray*}
&&\tilde{\eta}_\alpha=(\theta\otimes1)(\eta_\alpha)\in L^2(\tilde{M})\overline{\otimes}L^2(\overline{M}),\\
&&\zeta_\alpha=(e_M\otimes1)(\tilde{\eta}_\alpha)\in L^2(M)\overline{\otimes}L^2(\overline{M}),\\
&&\zeta_\alpha^\bot=\tilde{\eta}_\alpha-\zeta_\alpha\in(L^2(\tilde{M})\ominus L^2(M))\overline{\otimes}L^2(\overline{M}).
\end{eqnarray*}
Using the identity $\|(x\otimes1)\tilde{\eta}_\alpha\|_2^2 =\tau(E_M(\theta^{-1}(x^*x))) =\|x\|_2^2$ then for every $u\in F$ and a sufficiently
large $\alpha$ we obtain the following inequalities
\begin{eqnarray*}
\|[u\otimes\overline{u},\zeta_\alpha^\bot]\|_2\le\|[u\otimes\overline{u},\tilde{\eta}_\alpha]\|_2
\le\|(\theta\otimes1)([u\otimes\overline{u},\eta_\alpha])\|_2+2\|u-\theta(u)\|_2\le\frac{\epsilon}{2}.
\end{eqnarray*}
Below we proceed by contradiction to show the following inequality

\begin{eqnarray}\label{501}\text{Lim}_\alpha\|\zeta_\alpha^\bot\|_2>\delta.\end{eqnarray}

Assuming (\ref{501}) does not hold we get the following estimations:
\begin{eqnarray*}
\text{Lim}_\alpha\|\tilde{\eta}_\alpha-(e_M(\theta(v))\otimes\overline{v})\zeta_\alpha\|_2
&\le&\text{Lim}_\alpha\|\tilde{\eta}_\alpha-(e_M(\theta(v))\otimes\overline{v})\tilde{\eta}_\alpha\|_2+\text{Lim}_\alpha\|\zeta_\alpha^\bot\|_2\\
&\le&\text{Lim}_\alpha\|\tilde{\eta}_\alpha-(e_M(\theta(v))\otimes\overline{v})\tilde{\eta}_\alpha\|_2+\delta\\
&=& \text{Lim}_\alpha\|\zeta_\alpha^\bot+\zeta_\alpha-(e_M\otimes1)(\theta(v)\otimes\overline{v})\tilde{\eta}_\alpha\|_2+\delta\\
&\le&\|\zeta_\alpha^\bot\|_2+\|(e_M\otimes1)(\tilde\eta_\alpha-(\theta(v)\otimes\overline{v})\tilde{\eta}_\alpha)\|_2+\delta\\
&\le&\|\tilde{\eta}_\alpha-(\theta(v)\otimes\overline{v})\tilde{\eta}_\alpha\|_2+2\delta.
\end{eqnarray*}

Then using the above inequalities we obtain
\begin{eqnarray*}
\|E_M(\theta(v))\|_2 &\ge& \text{Lim}_\alpha\|((E_M(\theta(v)))\otimes\overline{v})\zeta_\alpha\|_2
\ge\text{Lim}_\alpha\|\tilde{\eta}_\alpha\|_2 - 2\delta\ge\sqrt{1-c^2},
\end{eqnarray*}

\noindent which obviously contradicts (\ref{502}). Thus we have shown that $\text{Lim}_\alpha\|\zeta_\alpha^\bot\|>\delta$.

 For large enough $\alpha$, the vector $\zeta=\zeta_\alpha^\bot\in \mathcal H$ satisfies
$\|\zeta\|_2\ge \delta$ and $\|[u\otimes u,\zeta]\|_2\leqslant \frac{\varepsilon}{2}$, for all $u\in F$. Also, for every $x\in M$ we have that
\begin{eqnarray*}\|(x\otimes1)\zeta\|_2=\|(x\otimes1)(e_M^\bot\otimes1)\tilde{\eta}_\alpha\|_2\\ =\|(e_M^\bot\otimes1)(x\otimes1)\tilde{\eta}_\alpha\|_2\\
\le\|(x\otimes1)\tilde{\eta}_\alpha\|_2=\|x\|_2.
\end{eqnarray*}

\noindent Using Lemma \ref{bimoduledecomp} we can view $\zeta$ as a
vector in $ (\bigoplus_i L^2(\langle M,
e_{K_i}\rangle))\overline{\otimes} L^2(M)$. Since $K_i$ is amenable
then $L^2(\langle M, e_{K_i}\rangle)$ is weakly contained in the
coarse bimodule $L^2(M)\otimes L^2(M)$. Therefore we can assume
$\zeta=(\zeta_i)_i$, with $\zeta_i\in
(L^2(M)\overline{\otimes}L^2(M))\overline{\otimes}L^2(M)$. Define
$\zeta_i'=((\id\otimes\tau)(\zeta_i\zeta_i^*))^{\frac{1}{2}}\in
L^2(M)\overline{\otimes}L^2(M)$ and $\zeta'=(\zeta_i')_i\in
\bigoplus_{i=1}^{\infty}(L^2(M)\overline{\otimes}L^2(M))$. By
proceeding exactly as in the last part of the proof of Theorem 4.9.
in \cite{OP1}, one derives that $\|x\zeta'\|_2\leqslant\|x\|_2$ for
all $x \in M$, $\|[u,\zeta']\|_2\leqslant \varepsilon$ for all $u\in
F$ and $\|\zeta'\|_2\ge \delta$. But then Corollary 2.3. in
\cite{OP1} shows that $\mathcal G''$ is amenable.

 \vskip 0.03in
So now we are left to deal with the case when $\theta_t$ does converges uniformly on $(P)_1$. In this case Theorem \ref{intertwiningrigid2}
implies that $P\prec_M A\rtimes\G$ or $P\prec_M A\rtimes H^F$ for some finite set $F\subset\G$. Since the first case already gives one of the
conclusions of our theorem, for the remaining part we assume that $P\nprec_M A\rtimes\G$ and $P\prec_M A\rtimes H^F.$

Since $P\nprec_M A\rtimes\G$ then $P\nprec_M A$. Since $P\prec_M A\rtimes H^F$, after cutting by a projection, $p$, and applying a homomorphism
we can assume $pPp\subset A\rtimes H^F$. Since $P\nprec_M A$, in fact since $P\nsubseteq A$ there is $x=\sum_{g\in H^F}x_gu_g\in P$ such that,
for some $g\in H^F$, $x_g\neq0$.

Let $y=\sum_{\gamma\in\G}y_\gamma u_\gamma\in\mathcal{N}_{pMp}(pPp)=Q$, with $y_\gamma\in A\rtimes H^\G$. Now since $yxy^*\in A\rtimes H^F$ we
must have that there is a finite set $K\subset\G$ such that $y_\gamma=0$ for $\gamma\in\G\setminus K$. Thus for all $u\in\mathcal{U}(Q)$,
$\sum_{\gamma\in K}\|E_{A\rtimes H^\G}(uu_\gamma)\|_2>1/2.$

Thus $Q\prec A\rtimes H^\G$, which is amenable since $H^\G$ is amenable. Thus we have that $Q$ is amenable as desired.

\end{proof}

When combined with results from previous section, this technical result allows us to derive a strong $W^*$-rigidity result for compact
actions of certain wreath product groups. To introduce the result let us recall first the following definition.

\begin{defn}
Let  $\G\curvearrowright X$ and $\Lambda\curvearrowright Y$ be two free, ergodic actions. We say that they are \emph{virtually conjugate} if one can find finite index subgroups, $\G_1\subset\G$ and
$\Lambda_1\subset\Lambda$, positive measure subsets $X_1\subset X$ and $Y_1\subset Y$ with $X_1$ being $\G_1$-invariant and $Y_1$ being
$\Lambda_1$-invariant such that the restrictions $\G_1\curvearrowright X_1$ and $\Lambda_1\curvearrowright Y_1$ are conjugate.
\end{defn}

\begin{thm}
Let $H,K$ be amenable groups and $\G,\La$ groups with the property (T).
Assume that $H\wr\G\curvearrowright^{\sigma}X$ and
$K\wr\La\curvearrowright^{\rho}Y$ are free, measure preserving action such that ${\sigma}_{|\G}$ is compact, ergodic and $\rho_{|\La}$ is
ergodic. If  $L^\infty(X)\rtimes (H\wr\G)\simeq L^\infty(Y)\rtimes (K\wr\La)$, then
$\G\curvearrowright^{\sigma_{{|_{\G}}}}X$ is virtually conjugate to $\La\curvearrowright^{\rho_{{|_{\La}}}}Y$.
\end{thm}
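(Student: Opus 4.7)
The plan is as follows. Writing $M = L^\infty(X)\rtimes(H\wr\Gamma)$, $N = L^\infty(Y)\rtimes(K\wr\Lambda)$, $A = L^\infty(X)$, $B = L^\infty(Y)$, and letting $\phi:M\to N$ denote the given isomorphism, I would first use the compactness of $\sigma|_\Gamma$ to locate $\phi(A)$ inside $N$ via a weak-compactness argument. Indeed, the subgroup $\mathcal{G} = \mathcal{U}(A)\cdot\Gamma \subset \mathcal{N}_M(A)$ acts weakly compactly on $A$: the abelian algebra $A$ acts trivially on itself, while $\Gamma$ acts compactly on $A$. Thus $\phi(\mathcal{G})\subset\mathcal{N}_N(\phi(A))$ contains $\mathcal{U}(\phi(A))$ and acts weakly compactly on the Cartan subalgebra $\phi(A)$ of $N$. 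Since the proof of Theorem \ref{wcint} only uses such a subgroup (and not the full normalizer), its Cartan-case conclusion gives $\phi(A)\prec_N B$. As both $\phi(A)$ and $B$ are Cartan in $N$, Theorem A.1 of \cite{PBetti} produces a unitary $v\in N$ with $v\phi(A)v^*=B$; after replacing $\phi$ by $\Ad(v)\circ\phi$, one may therefore assume $\phi(A)=B$.

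Next I would exploit property (T) of $\Gamma$ to position $\phi(A\rtimes\Gamma)$. Since $\phi(L\Gamma)\subset N$ is rigid, Theorem \ref{intertwiningrigid} gives either $\phi(L\Gamma)\prec_N B\rtimes\Lambda$ or $\phi(L\Gamma)\prec_N B\rtimes(K\wr_{\Lambda_\iota F}\Lambda_\iota)$ for some $\iota$ and finite $F$. In the second case the target is amenable (the stabilizers $\Lambda_\iota$ are trivial in the classical wreath product, $K$ is amenable and $F$ is finite), contradicting the fact that $\phi(L\Gamma)$ is diffuse with property (T). Hence $\phi(L\Gamma)\prec_N B\rtimes\Lambda$, and invoking Lemma \ref{augumentintertwining} with the inclusion $B\subset B\rtimes\Lambda\subset N$ and the subgroup $\phi(\Gamma)\subset \mathcal{N}_N(B)$ promotes this to $\phi(A\rtimes\Gamma)\prec_N B\rtimes\Lambda$. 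The symmetric argument using property (T) of $\Lambda$ yields the reverse intertwining, and the factoring technique from the proof of Theorem \ref{merigid1} then produces a unitary $w\in\mathcal{N}_N(B)$ with $w\phi(A\rtimes\Gamma)w^*=B\rtimes\Lambda$. Consequently the Cartan inclusions $A\subset A\rtimes\Gamma$ and $B\subset B\rtimes\Lambda$ are isomorphic, and Singer's theorem implies that $\Gamma\curvearrowright X$ and $\Lambda\curvearrowright Y$ are orbit equivalent.

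The final step is to upgrade this orbit equivalence to virtual conjugacy, and here the compactness of $\sigma|_\Gamma$ becomes decisive. By Ioana's cocycle superrigidity theorem for compact/profinite actions of property (T) groups \cite{Io08}, the Zimmer cocycle associated to the above orbit equivalence is cohomologous to a genuine group homomorphism $\Gamma\to\Lambda$. Combined with essential freeness, ergodicity, and property (T) of $\Lambda$ (to rule out pathological images), this delivers the desired virtual conjugacy of $\Gamma\curvearrowright X$ with $\Lambda\curvearrowright Y$. The principal hurdle I foresee is the first step: carefully justifying that the Cartan-case conclusion of Theorem \ref{wcint} remains valid when weak compactness is only available for a subgroup of $\mathcal{N}_M(P)$ containing $\mathcal{U}(P)$. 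A close reading of the proof shows this is indeed the case, but the reduction deserves explicit treatment. A secondary technicality is to accommodate groups $\Gamma,\Lambda$ that need not be i.c.c., so that the unitary-conjugacy arguments of Theorem \ref{merigid1} still apply in spirit.
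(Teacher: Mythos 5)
Your proposal follows essentially the same route as the paper: use the compactness of $\sigma|_\G$ to get a weakly compact Cartan embedding and apply the Cartan case of Theorem \ref{wcint} to conjugate $\phi(L^\infty(X))$ onto $L^\infty(Y)$, then invoke the property (T) intertwining machinery of Theorem \ref{merigid1} (which you partly re-derive rather than cite) to match $A\rtimes\G$ with $B\rtimes\La$ and hence obtain orbit equivalence of the restricted actions, and finally upgrade to virtual conjugacy via Ioana's cocycle superrigidity theorem \cite{Io08}. Your extra care about applying Theorem \ref{wcint} when weak compactness is only known for the subgroup $\mathcal{U}(A)\cdot\G$ of the normalizer is a welcome refinement of a point the paper passes over silently, but it does not change the argument in substance.
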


\begin{proof}
Denote by $M=L^{\infty}(X)\rtimes_{\sigma} (H\wr\G)$ and
$N=L^{\infty}(Y)\rtimes_{\rho} (K\wr\La)$. By assumption there
exists a $*$-isomorphism $\theta$ between $M$ and $N$ and since
$\sigma$ is compact then $\theta(L^{\infty}(X))\subset N$ is a
weakly compact embedding. Noticing that $\theta(L^{\infty}(X))$ is
regular in $N$, the second part of the Theorem \ref{wcint} implies
that $\theta(L^{\infty}(X))\prec_{N}L^{\infty}(Y)$. Furthermore,
since both $\theta(L^{\infty}(X))$ and $L^{\infty}(Y)$ are Cartan
subalgebras of $N$, one can find a unitary $u\in N$ such that
$u\theta(L^{\infty}(X))u^*=L^{\infty}(Y)$. In particular, we have
obtained that
$H\wr\G\curvearrowright^{\sigma}X\cong_{OE}K\wr\La\curvearrowright^{\rho}Y$
which, by Theorem \ref{merigid1}, implies that
$\G\curvearrowright^{\sigma_{{|_{\G}}}}X\cong_{OE}\La\curvearrowright^{\rho_{{|_{\La}}}}Y$.
Finally, the conclusion follows by applying Ioana's Cocycle
Superrigidity Theorem from \cite{Io08}.
\end{proof}

\begin{rem} Note that the requirements that $\G$ have property (T) and that $\sigma$ be compact on $\Gamma$ in
the previous theorem,
forces  $\Gamma$ to be residually finite.
Indeed, first note that since $\G$ has property (T), it is finitely generated. Also recall that if the action $\G\curvearrowright (X, \mu)$
is compact then the associated unitary representation on $L^2(X,
\mu)$ decomposes as a direct sum of finite dimensional
representations, which we denote $\bigoplus_{i\in I} (\pi_i,
\mathcal{H}_i)$. So if the action is faithful (which is the case, because it is free),
then given $g\in \G$ we
can chose $i\in I$ such that $\pi_i(g)$ is nontrivial. Since the image of $\G$
under $\pi_i$ is finite dimensional and $\G$ is finitely generated, by a theorem of Mal'cev (see \cite{Ma40}), the group $\pi_i(\G)$ is residually finite. Thus there is a
finite group $G_{i,g}$ and a homomorphism
$\phi_{i,g}:\pi_i(\G) \rightarrow G_{i,g}$ such that $\phi_{i,g}\circ\pi_i(g)$
is non trivial. Thus $\G$ has a finite quotient $\phi_{i,g}\circ \pi_i(\G)$ in which the image of $g$ is non-trivial,
showing that $\G$ is residually finite. Note also that if $H$ is a residually finite abelian group 
(e.g. if it is finitely generated abelian), then $H\wr \G$ follows residually finite as well
(see e.g. \cite{Gr57}). Finally, in order to see that there are many actions of wreath product groups verifying the
conditions in 6.4, note that if $H\wr\G$ is residually finite then it has  profinite (thus compact) actions.
Altogether, we can take $\G$ to be any ``classic'' Kazhdan group, like $SL(n,\mathbb Z)$, $n \geq 3$,
and $H$ to be any finitely generated abelian group, like $\mathbb Z^k$, $(\mathbb Z/m\mathbb Z)^k$, etc.
\end{rem}


\begin{thebibliography}{CFW81}\setlength{\itemsep}{-1mm} \setlength{\parsep}{0mm} \small

\bibitem[AG08]{AG08} {\sc A. Alvarez and D. Gaboriau},
Free products, Orbit Equivalence and Measure Equivalence Rigidity.
{\it Preprint.} {\tt arXiv:0806.2788v2}

\vskip .05in

\bibitem[CFW81]{CFW} {\sc A. Connes, J. Feldman and B. Weiss}, An amenable equivalence relation
is generated by a single transformation. {\it Ergodic Theory Dynam. Systems} {\bf 1} (1981), 431--450.

\vskip .05in

\bibitem[CH08]{C-H} {\sc I. Chifan and C. Houdayer}, Bass-Serre rigidity results in von Neumann algebras. 
{\it Duke Math. J.} {\bf 153} (2010), 23--54.

\vskip .05in

\bibitem[CI08]{CI08} {\sc I. Chifan and A. Ioana}, Ergodic Subequivalence Relations Induced by a Bernoulli Action.
 {\it Geom. Funct. Anal.} {\bf 20} (2010), 53--67.

 \vskip .05in

\bibitem[CP10]{CP10} {\sc I. Chifan and J. Peterson},
Some unique group-measure space decomposition results. {\it
Preprint.} {\tt arXiv:1010.5194v2}


 \vskip .05in

\bibitem[Co75]{Co75} {\sc A. Connes}, A factor not anti-isomorphic to itself, {\it Ann. Math. (2)} {\bf 101} (1975), 536--554.

\vskip .05in

\bibitem[Co76]{C76} {\sc A. Connes},
Classification of injective factors. {\it Ann. of Math. (2)} {\bf
104} (1976), 73--115.

\vskip .05in

\bibitem[Co80a]{C80} {\sc A. Connes}, A factor of type II$_1$ with
  countable fundamental group. {\it J. Operator Theory} {\bf 4}
  (1980), 151--153.

 

\vskip .05in

\bibitem[CH89]{CowHaag} {\sc M. Cowling and U. Haagerup}, Completely bounded multipliers of the Fourier algebra of a
simple Lie group of real rank one. {\it Invent. Math.} {\bf 96} (1989), 507--549.

\vskip .05in

\bibitem[Dy63]{D63} {\sc H.A. Dye},
On groups of measure preserving transformations. II. {\it Amer. J.
Math.} {\bf 85} (1963), 551--576.

\vskip .05in


\vskip .05in


\bibitem[Fu99]{Fu99} {\sc A. Furman}, Orbit equivalence rigidity. {\it Ann. of Math. (2)} {\bf 150} (1999), 1083--1108.

\vskip .05in


\bibitem[Ga05]{G3} {\sc D. Gaboriau}, Examples of groups that are measure equivalent to the free group. {\it Ergodic Theory Dynam. Systems} {\bf 25} (2005), 1809--1827.

\vskip .05in

\bibitem[Gr57]{Gr57} {\sc K. Gruenberg}, Residual properties of infinite soluble groups. {\it Proc. London Math. Soc. (3)}
{\bf 7} (1957), 29-62.

\vskip .05in

\bibitem[Io06]{Io06} {\sc A. Ioana},
Rigidity results for wreath product II$_1$ factors. {\it J.
Funct. Anal.} {\bf 252} (2007), 763--791.

\vskip .05in

\bibitem[Io08]{Io08} {\sc A. Ioana}, Cocycle superrigidity for profinite actions
of property (T) groups. {\it Duke Math. J.} {\bf 157} (2011),
337--367.

\vskip .05in

\bibitem[IPP05]{IPP} {\sc A.\ Ioana, J.\ Peterson and S.\ Popa},
Amalgamated free products of weakly rigid factors and calculation of
their symmetry groups. {\it Acta Math.} {\bf 200} (2008), 85--153.

\vskip .05in

\bibitem[IPV10]{IPV} {\sc A. Ioana, and S. Popa, and S. Vaes},
A class of superrigid group von Neumann algebras. {\it Preprint.}
{\tt arXiv:1007.1412v2}

\vskip .05in

\bibitem[Ma40]{Ma40} {\sc A. Mal'cev},
On isomorphic matrix representations of infinite grou. {\it Mat. Sb.} {\bf
8} (1940), 405--422.

\vskip .05in

\bibitem[McDu69]{Mc69} {\sc D. McDuff},
Uncountably many II$_1$ factors. {\it Ann. Math. (2)} {\bf
90} (1969), 372--377.

\vskip .05in

\bibitem[MS02]{MoSh} {\sc N. Monod and Y. Shalom}, Orbit equivalence
  rigidity and bounded cohomology. {\it Ann. Math.} {\bf 164} (2006), 825--878.

  \vskip .05in

\bibitem[MvN43]{MvN43} {\sc F.J. Murray and J. von Neumann}, Rings of operators
IV, {\it Ann. Math.} {\bf 44} (1943), 716--808.

\vskip .05in

\bibitem[Oz03]{Oz} {\sc N. Ozawa}, Solid von Neumann algebras.
{\it Acta Math.} {\bf 192} (2004), 111--117.

\vskip .05in

\bibitem[Oz04]{Oz04} {\sc N. Ozawa}, A Kurosh-type theorem for type II$_1$ factors.
{\it Int. Math. Res. Not.} (2006), Art. ID 97560, 21 pp.

\vskip .05in

\bibitem[OP07]{OP1} {\sc N. Ozawa and S. Popa}, On a class of II$_1$ factors with at
most one Cartan subalgebra. {\it Ann. Math.} {\bf 172} (2010)
713--749.

\vskip .05in

\bibitem[OW80]{OW} {\sc D.S. Ornstein and B. Weiss}, Ergodic theory of amenable group actions.
{\it Bull. Amer. Math. Soc. (N.S.)} {\bf 2} (1980), 161--164.

\vskip .05in

\bibitem[Po86]{Pocorr} {\sc S. Popa},
Correspondences. Increst preprint (1986), Available online at
http://www.math.ucla.edu/~popa/popa-correspondences.pdf.

\vskip .05in

\bibitem[Po01a]{Po01a} {\sc S. Popa}, Some rigidity results for non-commutative Bernoulli shifts.
\emph{J. Funct. Anal.} \textbf{230} (2006), 273--328.

\vskip .05in

\bibitem[Po01b]{PBetti} {\sc S. Popa}, On a class of type II$_1$ factors with Betti numbers invariants. \emph{Ann. of Math.} \textbf{163} (2006), 809--899.

\vskip .05in

\bibitem[Po03]{P1} {\sc S. Popa}, Strong rigidity of II$_1$ factors arising from malleable actions of $w$-rigid groups, Part
I. \emph{Invent. Math.} \textbf{165} (2006), 369--408.

\vskip .05in

\bibitem[Po04]{P2} {\sc S. Popa}, Strong rigidity of II$_1$ factors
arising from malleable actions of $w$-rigid groups, II. {\it Invent.
Math.} {\bf 165} (2006), 409--452.

\vskip .05in

\bibitem[Po06a]{P-gap} {\sc S. Popa}, On the superrigidity of malleable actions with spectral gap.
{\it J. Amer. Math. Soc.} {\bf 21} (2008), 981--1000.

\vskip .05in

\bibitem[Po06b]{P-ICM} {\sc S. Popa}, Deformation and rigidity for group actions and von Neumann algebras. In {\it Proceedings of the International Congress of Mathematicians (Madrid, 2006)}, Vol.\ I, European Mathematical Society Publishing House, 2007, p.\
445--477.

\vskip .05in

\bibitem[PV09]{PV09} {\sc S. Popa and S. Vaes}, Group measure space decomposition of II$_1$ factors and $W^*$-superrigidity.
{\it  Invent. Math.} {\bf 182} (2010), 371--417.

\vskip .05in

\bibitem[Sa09]{Sa09} {\sc H. Sako}, Measure equivalence rigidity and bi-exactness of groups.
 {\it J. Funct. Anal.} {\bf 257} (2009), 3167--3202.

 \vskip .05in

\bibitem[Si55]{Si55} {\sc I. M. Singer}, Automorphisms of finite factors.
 {\it Amer. J. Math.} {\bf 77} (1955), 117--133.

 \vskip .05in

\bibitem[SW11]{SW} {\sc J. O. Sizemore and A. Winchester}, Uniqueness of tensor product decomposition for wreath product factors,
In preparation.

\vskip .05in

\bibitem[Va07]{V-bim} \textsc{S. Vaes},
Explicit computations of all finite index bimodules for a family of
II$_1$ factors. \emph{Ann. Sci. \'{E}cole Norm. Sup.} {\bf 41}
(2008), 743--788.

\vskip .05in

\bibitem[Va10]{V-icm} \textsc{S. Vaes}, Rigidity for von Neumann algebras and their invariants.
In \emph{Proceedings of the International Congress of Mathematicians} (Hyderabad, India, 2010), Vol. III,
Hindustan Book Agency, 2010, pp. 1624--1650.

\vskip .05in

\bibitem[Zi80]{Zi80} {\sc R.J. Zimmer}, Strong rigidity for ergodic actions of semisimple Lie groups. {\it Ann. Math.} {\bf 112}  (1980), 511--529.
\end{thebibliography}
\end{document}